\documentclass[12pt]{article}

\usepackage{amsmath}
\usepackage{amssymb}
\usepackage{amscd}
\usepackage{amsthm}
\usepackage{indentfirst}
\usepackage[english,frenchb]{babel}

\setlength{\marginparwidth}{0pt}
\setlength{\textwidth}{455pt}
\setlength{\topmargin}{0pt}
\setlength{\headheight}{10pt}
\setlength{\headsep}{25pt}
\addtolength{\textheight}{50pt}
\setlength{\evensidemargin}{0pt}
\setlength{\oddsidemargin}{0pt}

\theoremstyle{plain}
\newtheorem{thm}{Th\'eor\`eme}[section]
\newtheorem{lem}[thm]{Lemme}
\newtheorem{cor}[thm]{Corollaire}
\newtheorem{prop}[thm]{Proposition}
\newtheorem{propdfn}[thm]{Proposition-d\'efinition}
\theoremstyle{definition}
\newtheorem{dfn}[thm]{D\'efinition}

\newtheorem{rmq}[thm]{Remarque}
\newtheorem{exm}[thm]{Exemple}

\DeclareMathOperator{\Spec}{Spec}
\DeclareMathOperator{\pic}{Pic}

\newcommand{\Sa}{S}
\newcommand{\s}{\rho}
\newcommand{\A}{\mathcal{A}}
\newcommand{\fppf}{\text{\rm fppf}}
\newcommand{\ket}{\text{\rm k\'et}}
\newcommand{\et}{\text{\rm \'et}}
\newcommand{\kpl}{\text{\rm kpl}}
\newcommand{\topo}{\text{\rm top}}

\newcommand{\B}{\mathcal{B}}

\newcommand{\gm}{\mathbf{G}_{{\rm m}}}
\newcommand{\gmlog}{\mathbf{G}_{{\rm m,log}}}

\newcommand{\gmy}{\mathbf{G}_{{\rm m},Z}}
\newcommand{\gmv}{\mathbf{G}_{{\rm m},V}}

\newcommand{\cl}{cl}
\DeclareMathOperator{\Cl}{Cl}

\DeclareMathOperator{\homr}{Hom}
\DeclareMathOperator{\Hom}{\underline{Hom}}
\DeclareMathOperator{\ext}{Ext}
\DeclareMathOperator{\Ext}{\underline{Ext}}

\DeclareMathOperator{\coker}{Coker}
\DeclareMathOperator{\Rac}{Rac}
\DeclareMathOperator{\RAC}{RAC}

\DeclareMathOperator{\Raclog}{C^{\rm log}}
\DeclareMathOperator{\Sym}{Sym}

\DeclareMathOperator{\diviseur}{div}
\DeclareMathOperator{\DivRat}{DivRat}
\DeclareMathOperator{\Divp}{DivPrinc}

%%%%%%%%%%%%%%%%%%%%%%%%%%%%%%%%%%%%%%%%%%%%%

\begin{document}

\title{Cohomologie log plate, actions mod\'er\'ees et structures galoisiennes}

\author{Jean Gillibert}

\date{octobre 2010}

\maketitle

\begin{abstract}
Soit $X$ un log sch\'ema fin et satur\'e, et soit $G$ un sch\'ema en groupes commutatif fini et plat sur le sch\'ema sous-jacent
\`a $X$. Si l'on peut consid\'erer que les $G$-torseurs pour la topologie fppf sont des objets non ramifi\'es par nature, au
contraire les $G$-torseurs pour la topologie log plate nous permettent d'envisager de la ramification mod\'er\'ee. En nous
servant des travaux de Kato, nous d\'efinissons un concept de structure galoisienne pour ces torseurs, puis g\'en\'eralisons
les constructions pr\'ec\'edentes de l'auteur (homomorphisme de classes pour les vari\'et\'es ab\'eliennes \`a r\'eduction semi-stable)
dans ce cadre, levant au passage certaines restrictions.

\medskip

\begin{otherlanguage}{english}
\begin{center}
\textbf{Abstract}
\end{center}

Let $X$ be a fine and saturated log scheme, and let $G$ be a commutative finite flat group scheme over the underlying scheme of $X$. If $G$-torsors for the fppf topology can be thought of as being unramified objects by nature, then $G$-torsors for the log flat topology allow us to consider tame ramification. Using the results of Kato, we define a concept of Galois structure for these torsors, then we generalize the author's previous constructions (class-invariant homomorphism for semi-stable abelian varieties) in this new setting, thus dropping some restrictions.
\end{otherlanguage}
\end{abstract}

%%%%%%%%%%%%%%%%%%%%%%%%%%%%%%%%%%%%%%%%%%%%%

%\tableofcontents

\section{Introduction}

Le probl\`eme qui motive ce travail est la construction (dans la continuit\'e de \cite{gil1} et \cite{gil2}) du \emph{class-invariant homomorphism} (introduit en premier par M. J. Taylor dans \cite{t2}) pour les vari\'et\'es ab\'eliennes \`a mauvaise r\'eduction. Il s'agissait \`a l'origine de comprendre pourquoi la composante neutre du mod\`ele de N\'eron (et, plus g\'en\'eralement, l'accouplement de monodromie) jouait un r\^ole si particulier dans les travaux pr\'ec\'edents de l'auteur, et comment on pouvait \'etendre la construction des invariants de classes \`a tous les points de la vari\'et\'e, sans aucune limitation impos\'ee par les groupes de composantes. Nous fournissons ici une r\'eponse compl\`ete \`a ce probl\`eme (cf. th\'eor\`emes \ref{thintro1} et \ref{thintro2}).

Au cours de nos investigations, le langage des log sch\'emas s'est av\'er\'e \^etre un cadre naturel et agr\'eable pour faire na\^itre et formuler correctement notre r\'eponse.

Une fois l'outil logarithmique adopt\'e, trois questions \'etroitements imbriqu\'ees se sont d\'egag\'ees : le probl\`eme g\'en\'eral de prolongement des torseurs fppf classiques par des torseurs log plats, le cas particulier des torseurs obtenus gr\^ace au cobord d'une isog\'enie entre sch\'emas ab\'eliens, et enfin l'\'etude de la structure galoisienne des torseurs log plats.
La premi\`ere question r\'ev\`ele des liens \'etroits entre torseurs log plats et actions mod\'er\'ees, la deuxi\`eme se r\'esout en prolongeant des biextensions pour la topologie log plate, et la troisi\`eme admet deux r\'eponses : l'une classique, l'autre  logarithmique.

Au final, cet article se veut \^etre une premi\`ere pierre dans un nouveau champ d'investigation : l'utilisation des outils logarithmiques (log sch\'emas et topologie log plate) pour l'\'etude des actions mod\'er\'ees de sch\'emas en groupes et des structures galoisiennes qui s'en d\'eduisent.

\subsection{Prolongement de torseurs}
\label{hunhun}

Soient $X$ un sch\'ema n\oe{}th\'erien int\`egre r\'egulier, $G$ un $X$-sch\'ema en groupes commutatif, fini localement libre, et $U$ un ouvert de $X$ dont le compl\'ementaire est un diviseur $D$. Alors le morphisme de restriction
$$
H^1_{\fppf}(X,G)\longrightarrow H^1_{\fppf}(U,G)
$$
est injectif (cf. proposition \ref{injectrestr}). On aimerait comprendre son image. En d'autres termes, \'etant donn\'e un $G$-torseur sur $U$, on aimerait savoir sous quelles conditions celui-ci admet un prolongement en un $G$-torseur sur $X$.

Ce probl\`eme est de nature locale. Soit $x\in D$ un point de codimension $1$, comme $X$ est r\'egulier on sait que $S:=\Spec(\mathcal{O}_{X,x})$ est un trait. Supposons que $G\times_X S$ soit un $S$-sch\'ema en groupes \'etale --- c'est le cas en particulier si la caract\'eristique r\'esiduelle de $x$ est premi\`ere \`a l'ordre de $G$ --- alors, pour qu'un $G$-torseur sur $U$ se prolonge au-dessus du point $x$, il faut et il suffit que l'extension correspondante du point g\'en\'erique de $S$ soit non ramifi\'ee (c'est-\`a-dire que le morphisme de traits associ\'e soit \'etale).

Une telle situation n'est pas fr\'equente. Par exemple, \'etant donn\'ee une extension galoisienne de corps de nombres, il n'y a aucune raison pour qu'elle soit non ramifi\'ee en dehors des places divisant l'ordre de son groupe de Galois. Notons toutefois que la ramification est toujours mod\'er\'ee en une place ne divisant pas l'ordre du groupe.

En d'autres termes, \'etant donn\'e un $G$-torseur sur $U$, le fait d'\^etre prolongeable par un $G$-torseur sur $X$ (pour la topologie fppf) est une condition trop restrictive si l'on souhaite comprendre en finesse la ramification sur le compl\'ementaire de $U$.
Remplacer la topologie fppf par une topologie plus fine (mais n\'eanmoins sous-canonique) ne cr\'ee pas de nouveaux torseurs repr\'esentables.
Face \`a cette situation, il convient de quitter le monde des torseurs, ou de se placer dans une autre cat\'egorie que celle des sch\'emas.

Dans cet article, nous avons choisi d'\'etudier les torseurs dans la cat\'egorie des log sch\'emas fins et satur\'es, munie de la topologie log plate d\'efinie par Kato. Pour fixer les id\'ees, regardons de plus pr\`es le cas de figure pr\'ec\'edent. Supposons que $D$ soit un diviseur \`a croisements normaux.
Nous munissons $X$ de la structure logarithmique d\'efinie par $U$, laquelle est fine et satur\'ee (voir l'exemple \ref{debase}). Nous nous int\'eressons \`a pr\'esent au morphisme de restriction
$$
H^1_{\kpl}(X,G)\longrightarrow H^1_{\fppf}(U,G)
$$
o\`u le groupe de cohomologie de gauche est calcul\'e pour la topologie Kummer plate. Dans le cas pr\'esent, on montre que tout \'element de $H^1_{\text{\rm kpl}}(X,G)$  est repr\'esentable par un log sch\'ema (fin et satur\'e) dont le sch\'ema sous-jacent est fini localement libre sur $X$ (cf. proposition \ref{loclibre}). En outre, l'action de $G$ sur ce dernier est mod\'er\'ee au sens de Chinburg, Erez, Pappas et Taylor dans \cite{cept} (cf. th\'eor\`eme \ref{CEPTtame}).

Cette approche entretient des liens de parent\'e \'evidents avec celle de Grothendieck et Murre dans \cite{gm}, dont elle s'inspire (voir la remarque \ref{GMlike}).

La notion d'action mod\'er\'ee d\'efinie dans \cite{cept} m\'erite quelques pr\'ecisions. Dans ce langage, il s'entend que c'est bien l'action qui est mod\'er\'ee, pas la ramification de l'objet sur lequel on agit : l'extension sous-jacente \`a une action mod\'er\'ee peut tr\`es bien \^etre sauvagement ramifi\'ee. Un point central de cette th\'eorie est que toute action d'un sch\'ema en groupes fini de type multiplicatif est mod\'er\'ee. Plus g\'en\'eralement, toute action mod\'er\'ee d'un sch\'ema en groupes commutatif fini localement libre est induite, localement pour la topologie fppf, par une action d'un sch\'ema en groupes fini de type multiplicatif (voir \cite[Theorem 6.4]{cept}).

La premi\`ere cons\'equence tangible de notre choix logarithmique est la suivante : les actions mod\'er\'ees \'etudi\'ees par les auteurs pr\'ec\'edents, qui avaient jusqu'ici une existence purement g\'eom\'etrique, entrent de plain-pied dans le champ de l'alg\`ebre homologique, et s'enrichissent de toute la souplesse de cette derni\`ere.

\subsection{Vari\'et\'es ab\'eliennes et isog\'enies}
\label{hundeux}

A pr\'esent $S$ est un sch\'ema de Dedekind (\emph{i.e.} n\oe{}th\'erien r\'egulier de dimension $1$) connexe de point g\'en\'erique $\eta=\Spec(K)$.
Soit $\A$ (resp. $\A^t$) le mod\`ele de N\'eron d'une $K$-vari\'et\'e ab\'elienne (resp. de sa vari\'et\'e duale), on note $U\subseteq S$ l'ouvert de bonne r\'eduction de $\A$, et $D$ le diviseur compl\'ementaire de $U$. Nous munissons $S$ de la log structure d\'efinie par l'ouvert $U$.

On note $W_U$ la biextension de Weil de $(\A_U,\A^t_U)$ par $\gm$ (pour la topologie fppf), laquelle traduit la dualit\'e entre les sch\'emas ab\'eliens $\A_U$ et $\A^t_U$. Son $\gm$-torseur sous-jacent est le fibr\'e de Poincar\'e.

On note $\A^0$ le $S$-sch\'ema obtenu en recollant la fibre g\'en\'erique de $\A$ avec les composantes neutres de ses fibres sp\'eciales. On note $\Phi$ le faisceau quotient $\A/\A^0$  que l'on appelle, par abus de langage, le groupe des composantes de $\A$.
On note $\Phi'$ le groupe des composantes de $\A^t$. On appelle (\'egalement par abus) accouplement de monodromie l'objet obtenu en recollant, pour chaque $s\in D$, l'accouplement  d\'efini par \cite[expos\'e IX, 1.2.1]{gro7} (apr\`es changement de base $\Spec(\mathcal{O}_{S,s})\rightarrow S$).

Si $\Gamma\subseteq\Phi$ est un sous-groupe ouvert de $\Phi$, on note $\A^{\Gamma}$ l'image r\'eciproque de $\Gamma$ par le morphisme naturel $\A\rightarrow \Phi$. Il s'agit d'un sous-groupe ouvert de $\A$.

Supposons que $\Gamma\subseteq\Phi$ et $\Gamma'\subseteq\Phi'$ soient des sous-groupes ouverts de $\Phi$ et $\Phi'$. Si $\Gamma$ et $\Gamma'$ sont orthogonaux sous l'accouplement de monodromie, alors il existe une unique biextension de $(\A^{\Gamma},\A^{t,\Gamma'})$ par $\gm$ prolongeant $W_U$, que nous noterons  $W^{\Gamma,\Gamma'}$.

Soit $G\subseteq \A^{\Gamma}$ un sous-groupe fini et plat de $\A$. Soit $\phi_U:\A_U\rightarrow \B_U$ l'isog\'enie obtenue en quotientant (pour la topologie fppf) $\A_U$ par $G_U$, et soit $\phi_U^t: \B_U^t\rightarrow \A_U^t$ l'isog\'enie duale, de noyau $G_U^D$. On dispose alors d'une suite exacte (pour la topologie fppf sur $X$)
\begin{equation}
\label{schabel}
\begin{CD}
0 @>>> G_U^D @>>> \B_U^t @>\phi_U^t>> \A_U^t @>>> 0 \\
\end{CD}
\end{equation}
d'o\`u, en appliquant le foncteur des sections globales, un morphisme cobord
$$
\begin{CD}
\Delta_U:\A_U^t(U) @>>> H^1_{\fppf}(U,G^D). \\
\end{CD}
$$

On souhaite \`a pr\'esent travailler sur $S$ tout entier, sans \^etre oblig\'e de se restreindre \`a l'ouvert $U$ de bonne r\'eduction. Autrement dit, on se demande pour quels points $y\in \A_U^t(U)=\A^t(S)$ le torseur $\Delta_U(y)$ se prolonge en un $G^D$-torseur sur $S$. Cette question s'inscrit dans la probl\'ematique du paragraphe \ref{hunhun}.

Tout d'abord, on cherche \`a d'\'etendre, de fa\c{c}on suffisamment naturelle, la suite exacte $(\ref{schabel})$ de faisceaux sur $U$ en une suite exacte de faisceaux sur $S$. Dans \cite{gil1} et \cite{gil2} nous avons construit, en travaillant dans un petit site fppf convenable, une suite exacte
\begin{equation}
\label{schpabel}
\begin{CD}
0 @>>> G^D @>>> \Ext^1(B,\gm) @>\phi^*>> \Ext^1(\A^{\Gamma},\gm) @>>> 0 \\
\end{CD}
\end{equation}
dont les termes sont des faisceaux fppf (\emph{a priori} non repr\'esentables). Ici, $\phi:\A^{\Gamma}\rightarrow B$ est le quotient de $\A^{\Gamma}$ par $G$ pour la topologie fppf (en g\'en\'eral $B$ n'est pas un sous-groupe d'un mod\`ele de N\'eron). On en d\'eduit des morphismes
$$
\begin{CD}
\A^{t,\Gamma'}(S) @>\gamma>> \ext^1_{\fppf}(\A^{\Gamma},\gm) @>\delta>> H^1_{\fppf}(S,G^D) \\
\end{CD}
$$
o\`u $\delta$ est le cobord de la suite $(\ref{schpabel})$ et $\gamma$ est le morphisme associ\'e \`a la biextension $W^{\Gamma,\Gamma'}$ d\'efinie plus haut.
Ainsi, le cobord $\Delta_U$ \`a valeurs dans $H^1_{\fppf}(U,G^D)$ se rel\`eve en un morphisme \`a valeurs dans $H^1_{\fppf}(S,G^D)$, \`a condition de se restreindre aux points de $\A^{t,\Gamma'}(S)$. Comme en g\'en\'eral ce dernier est un sous-groupe strict de $\A^t(S)$, on se demande s'il est possible d'\'etendre $\delta\circ\gamma$ \`a $\A^t(S)$ tout entier. Comme nous l'avons dit dans le paragraphe \ref{hunhun}, l'obstruction \`a cela est un probl\`eme de ramification, lequel s'explicite ici de la fa\c{c}on suivante : si $y$ est un point de $\A_{\eta}^t(K)$ alors l'extension engendr\'ee par les points de $\phi^t$-division de $y$ a tendance \`a \^etre ramifi\'ee en les points de $S$ au-dessus desquels $y$ ne se r\'eduit pas en un point de la composante neutre de $\A^t$.

En suivant toujours le cheminement du paragraphe \ref{hunhun}, notre nouvel objectif est de relever $\Delta_U$ en un morphisme \`a valeurs dans $H^1_{\kpl}(S,G^D)$. Pour cela, il faut d'abord prolonger la biextension $W_U$ en une biextension de $(\A,\A^t)$ par $\gm$ pour la topologie Kummer log plate. Nous l'avons fait dans l'article \cite{gil5}. Apr\`es quelques ajustements techniques (cf. paragraphe \ref{dernier}), nous sommes en mesure de d\'efinir des morphismes
$$
\begin{CD}
\A^t(S) @>\gamma^{\rm log}>> \ext^1_{\kpl}(\A,\gm) @>\delta^{\rm log}>> H^1_{\kpl}(S,G^D) \\
\end{CD}
$$
tous les groupes \'etant calcul\'es \`a l'aide de cohomologie Kummer log plate. Nous r\'esumons nos r\'esultats dans le th\'eor\`eme ci-dessous (cf. paragraphe \ref{dernier}).

\begin{thm}
\label{thintro1}
\begin{enumerate}
\item[$(i)$] Le cobord $\Delta_U$ de la suite \eqref{schabel} se rel\`eve en un morphisme
$$
\begin{CD}
\Delta:\A^t(S) @>>> H^1_{\kpl}(S,G^D)
\end{CD}
$$
\item[$(ii)$] Pour tout point $y\in\A^t(S)$, $\Delta(y)$ appartient au sous-groupe $H^1_{\fppf}(S,G^D)$ si et seulement si l'image de $y$ dans $\Phi'$ est orthogonale \`a l'image de $G$ dans $\Phi$ sous l'accouplement de monodromie.
\end{enumerate}
\end{thm}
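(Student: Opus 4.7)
Mon plan serait de poser $\Delta:=\delta^{\log}\circ\gamma^{\log}$, où $\gamma^{\log}:\A^t(S)\to\ext^1_{\kpl}(\A,\gm)$ est le morphisme associé à la biextension Kummer log plate $W$ de $(\A,\A^t)$ par $\gm$ construite dans \cite{gil5}, et où $\delta^{\log}:\ext^1_{\kpl}(\A,\gm)\to H^1_{\kpl}(S,G^D)$ serait le cobord issu de la suite exacte de faisceaux
\begin{equation*}
0\longrightarrow G^D\longrightarrow \Ext^1_{\kpl}(B,\gm)\longrightarrow \Ext^1_{\kpl}(\A,\gm)\longrightarrow 0,
\end{equation*}
elle-même obtenue en appliquant le foncteur $\Ext(-,\gm)$ à la suite $0\to G\to\A\to B\to 0$, où $B:=\A/G$ désigne le quotient fppf (donc aussi log plat). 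Les annulations nécessaires ($\Hom(\A,\gm)=0$, $\Ext^1_{\kpl}(G,\gm)=0$, etc.) seraient à vérifier au passage, en s'appuyant sur les analogues fppf déjà utilisés dans \cite{gil1} et \cite{gil2}.

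La vérification du $(i)$ devrait être formelle : sur l'ouvert $U$, la structure logarithmique est triviale, donc $H^1_{\kpl}(U,-)=H^1_{\fppf}(U,-)$, la biextension $W$ se restreint à $W_U$, et $B$ se restreint à $\B_U$. Par fonctorialité, la restriction de $\Delta$ à $\A^t(U)$ coïnciderait avec le cobord $\Delta_U$ de la suite \eqref{schabel}.

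Pour la suffisance dans $(ii)$, je supposerais l'orthogonalité et prendrais $\Gamma$ (resp. $\Gamma'$) égal au plus petit sous-groupe ouvert de $\Phi$ (resp. $\Phi'$) contenant l'image de $G$ (resp. de $y$). La biextension $W^{\Gamma,\Gamma'}$ existerait alors et la construction fppf rappelée au paragraphe \ref{hundeux} fournirait un élément $(\delta\circ\gamma)(y)\in H^1_{\fppf}(S,G^D)$. Il resterait à comparer son image dans $H^1_{\kpl}(S,G^D)$ à $\Delta(y)$, ce qui devrait découler de ce que la restriction de $W$ à $(\A^{\Gamma},\A^{t,\Gamma'})$ coïncide avec $W^{\Gamma,\Gamma'}$, conjuguée à la fonctorialité du cobord relative au morphisme canonique du topos fppf vers le topos Kummer log plat.

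Le point le plus délicat sera la nécessité dans $(ii)$. La question étant locale sur $S$, on se ramènerait à un trait en un point $s\in D$ de mauvaise réduction. Il faudrait alors d'une part obtenir une description explicite du quotient $H^1_{\kpl}(S,G^D)/H^1_{\fppf}(S,G^D)$ en termes de caractères des parties toriques des fibres spéciales de $\A$ et $\A^t$ (ou, de façon équivalente, en termes des groupes de composantes $\Phi$, $\Phi'$), et d'autre part identifier dans ce quotient l'image de $\Delta(y)$ à la valeur de l'accouplement de monodromie sur les classes $G_s\in\Phi_s$ et $y_s\in\Phi'_s$. Cette identification, technique centrale du théorème, devrait s'obtenir en combinant la construction locale de la biextension log plate $W$ donnée dans \cite{gil5} à la définition originelle de l'accouplement de monodromie \cite[expos\'e IX, 1.2.1]{gro7}.
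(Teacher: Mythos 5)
Your part $(i)$ is essentially the paper's construction: the paper also defines $\Delta=\delta^{\rm log}\circ\gamma^{\rm log}$, with $\delta^{\rm log}$ the coboundary of the short exact sequence obtained by applying $\Hom(-,\gm)$ to $0\to G\to\A\to B\to 0$ and using the vanishing of $\Hom(\A,\gm)$ and of $\Ext^1_{\kpl}(G,\gm)$ (lemme \ref{nulhom} et th\'eor\`eme \ref{splitext}). One caveat on what you defer ``au passage'': the vanishing of $\Hom(\A,\gm)$ is \emph{false} on the big Kummer log flat site (over a test scheme supported in a bad fibre, the toric part of $\A$ maps nontrivially to $\gm$). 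The paper works on the \emph{petit} site Kummer plat, whose objects are log flat and kumm\'erien over $S$, hence flat on underlying schemes by la proposition \ref{loclibre}, so that the good-reduction locus pulls back to a dense open --- this is what makes lemme \ref{nulhom} work, and it forces $B$ to be the quotient sheaf on that small site rather than simply ``le quotient fppf''. For the sufficiency in $(ii)$, your route through $W^{\Gamma,\Gamma'}$ and the fppf construction of \cite{gil1} is a genuinely different and workable argument (the needed compatibility $W^{\rm log}|_{(\A^{\Gamma},\A^{t,\Gamma'})}=W^{\Gamma,\Gamma'}$ follows from the injectivity of restriction to $U$, as in la proposition \ref{injectrestr} and \cite{gil5}); the paper instead obtains both implications at once from a single computation.

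The genuine gap is the necessity in $(ii)$, which you correctly identify as the crux but for which you give no mechanism. Your plan --- describe $H^1_{\kpl}(S,G^D)/H^1_{\fppf}(S,G^D)$ via le th\'eor\`eme \ref{epsilonkato} and then ``identify'' the image of $\Delta(y)$ with the monodromy pairing --- has no starting point as stated, because $\Delta(y)$ is only defined as a coboundary, with no explicit cocycle on which to evaluate the invariant $\nu$ coming from \eqref{spec1}. The paper's device is precisely what you are missing: first, la proposition \ref{kerpilog} reduces ``$\Delta(y)$ est fppf'' to ``$\pi^{\rm log}(\Delta(y))=\psi^{\rm log}(y)$ est un $\gm$-torseur fppf''; second, le lemme \ref{geomdesk} and l'\'egalit\'e \eqref{classinvariant} give the explicit geometric representative
$$
\psi^{\rm log}(y)=(i_G\times y)^*\bigl(t(W^{\rm log})\bigr),
$$
a concrete $\gm$-torsor \emph{on $G$}; third, over a strictly henselian trait, the invariant $\nu$ is computed on $G$ (not on $S$) via the strict morphism $G\rightarrow i_*\Gamma$ and the sheafification argument of \cite[proposition 4.2.3]{gil5}, yielding $m(\zeta_y)=\sum_{\gamma\in\Gamma(k)}\langle\gamma,\zeta_y\rangle^{\rm mono}.\gamma$ in $H^0(G,\mathbb{Q}/\mathbb{Z}\otimes\overline{M}_G^{\rm gp})$, with the injectivity of the comparison map $e$ giving the equivalence. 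So the missing idea is the passage through the logarithmic Galois-structure morphism $\pi^{\rm log}$ and the biextension pullback formula, which converts the inaccessible coboundary class $\Delta(y)$ into a computable object; merely invoking the local construction of $W^{\rm log}$ in \cite{gil5} together with \cite[expos\'e IX, 1.2.1]{gro7} does not by itself produce this identification.
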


Le point $(i)$ constitue une propri\'et\'e frappante des torseurs obtenus en divisant des points par une isog\'enie entre vari\'et\'es ab\'eliennes : ceux-ci sont en effet de nature mod\'er\'ee au-dessus des places de mauvaise r\'eduction.
Le point $(ii)$ montre que la ramification desdits torseurs est contr\^ol\'ee par l'accouplement de monodromie. En particulier les constructions pr\'ec\'edentes \'etaient optimales du point de vue de la cohomologie fppf.

\subsection{Structures galoisiennes}

Comme nous l'avons dit plus haut, l'une de nos motivations est l'\'etude du \emph{class-invariant homomorphism} (introduit par M. J. Taylor dans \cite{t2}) pour les vari\'et\'es ab\'eliennes \`a mauvaise r\'eduction.

Rappelons tout d'abord que l'on doit \`a Waterhouse (voir \cite[Theorem 5]{w}) la construction d'un homomorphisme mesurant la structure galoisienne des $G^D$-torseurs
$$
\begin{CD}
\pi:H^1(X,G^D) @>>> \pic(G)\\
\end{CD}
$$
dont l'interpr\'etation (en termes alg\'ebriques) est \emph{grosso modo} la suivante : l'action de $G^D$ sur un torseur $T$ induit sur l'alg\`ebre de $T$ une structure de comodule sur l'alg\`ebre de $G^D$ ;  l'alg\`ebre de $T$ est cons\'equemment munie d'une structure de module sur l'alg\`ebre de $G$, lequel module s'av\`ere \^etre localement libre de rang $1$, donnant ainsi naissance \`a une classe $\pi(T):=\mathcal{O}_T\otimes_{\mathcal{O}_G} (\mathcal{O}_{G^D})^{-1}$ dans le groupe $\pic(G)$. Notons que $\pi$ admet une autre d\'efinition, de nature cohomologique.

Ces deux d\'efinitions donnent naissance \`a deux g\'en\'eralisations naturelles de la construction de Waterhouse dans le cadre de la topologie Kummer log plate.

La premi\`ere (cf. d\'efinition \ref{mapcl}) est une application
$$
\begin{CD}
\cl:H^1_{\kpl}(X,G^D) @>>> \pic(G) \\
\end{CD}
$$
qui envoie un $G^D$-torseur $Y$ sur la classe de $\mathcal{O}_Y\otimes_{\mathcal{O}_G} (\mathcal{O}_{G^D})^{-1}$ dans $\pic(G)$. Pour que ceci soit bien d\'efini, il faut que l'alg\`ebre sous-jacente \`a $Y$ soit localement libre de rang $1$ sur l'alg\`ebre de $G$, ce qui est le cas sous des hypoth\`eses raisonnables (voir le corollaire \ref{CHtame}). Notons ici que $\cl$ n'est pas un morphisme de groupes en g\'en\'eral (cf. exemple \ref{CDNconstant}).

La deuxi\`eme (cf. d\'efinition \ref{loghomo}) est de nature cohomologique : nous d\'efinissons un morphisme
$$
\begin{CD}
\pi^{\rm log}:H^1_{\kpl}(X,G^D) @>>> H^1_{\kpl}(G,\gm)\\
\end{CD}
$$
qui mesure la structure galoisienne logarithmique des $G^D$-torseurs pour la cohomologie Kummer plate. Ce nouvel invariant nous semble pertinent sous deux points de vue. Tout d'abord, il combine la structure de module galoisien d'un torseur et des donn\'ees de ramification (cf. exemples \ref{logramiftrait} et \ref{quelonveut}). Ensuite, c'est un morphisme de groupes, contrairement \`a l'invariant $\cl$ qui ne mesure que la structure de module. Nous projetons d'approfondir l'\'etude de cet invariant dans nos futurs travaux.

Il d\'ecoule du th\'eor\`eme \ref{thintro1} et de nos constructions deux g\'en\'eralisations possibles du \emph{class-invariant homomorphism}, selon que l'on mesure la structure galoisienne \`a l'aide de $\cl$ ou de $\pi^{\rm log}$. On peut r\'esumer ces constructions dans le th\'eor\`eme ci-dessous.

\begin{thm}
\label{thintro2}
Avec les notations et hypoth\`eses du paragraphe \ref{hundeux}, il existe une application
$$
\begin{CD}
\psi^{\cl}:\A^t(S) @>>> H^1_{\kpl}(S,G^D) @>\cl >> \pic(G) \\
\end{CD}
$$
et un morphisme de groupes
$$
\begin{CD}
\psi^{\rm log}:\A^t(S) @>>> H^1_{\kpl}(S,G^D) @>\pi^{\rm log}>> H^1_{\kpl}(G,\gm)\\
\end{CD}
$$
qui mesurent respectivement les structures galoisiennes classique et logarithmique des torseurs (pour la topologie Kummer plate) obtenus en divisant les points de $\A^t(S)$ par l'isog\'enie $\phi^t$. Quand on restreint $\psi^{\cl}$ et $\psi^{\rm log}$ au sous-groupe $\A^{t,0}(S)$ on retrouve le morphisme $\psi:\A^{t,0}(S) \rightarrow \pic(G)$ \'etudi\'e dans \cite{gil1} et \cite{gil2}.
\end{thm}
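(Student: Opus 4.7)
Le th\'eor\`eme est essentiellement une synth\`ese des constructions d\'ej\`a mises en place : disposant du morphisme $\Delta:\A^t(S)\to H^1_{\kpl}(S,G^D)$ fourni par le th\'eor\`eme \ref{thintro1}$(i)$, la strat\'egie consiste simplement \`a poser
$$
\psi^{\cl}:=\cl\circ\Delta\quad\text{et}\quad\psi^{\rm log}:=\pi^{\rm log}\circ\Delta,
$$
puis \`a v\'erifier successivement que ces compositions sont bien d\'efinies, que $\psi^{\rm log}$ est effectivement un morphisme de groupes, et enfin qu'elles restituent le morphisme $\psi$ de \cite{gil1,gil2} sur le sous-groupe $\A^{t,0}(S)$.

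Pour la bonne d\'efinition, l'invariant $\cl$ requiert que la $G$-alg\`ebre sous-jacente aux torseurs consid\'er\'es soit localement libre de rang $1$. Or la proposition \ref{loclibre} et le th\'eor\`eme \ref{CEPTtame} assurent que tout \'el\'ement de $H^1_{\kpl}(S,G^D)$ est repr\'esent\'e par un log sch\'ema fin et satur\'e dont le sch\'ema sous-jacent est fini localement libre sur $S$ et dont l'action est mod\'er\'ee au sens de \cite{cept} ; c'est pr\'ecis\'ement la situation du corollaire \ref{CHtame}, qui garantit que $\cl$ s'applique \`a toute l'image de $\Delta$. Quant \`a $\pi^{\rm log}$, il est d\'efini sur $H^1_{\kpl}(S,G^D)$ tout entier par \ref{loghomo}, donc $\psi^{\rm log}$ est automatiquement d\'efini. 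Le caract\`ere multiplicatif de $\psi^{\rm log}$ r\'esulte de ce que $\Delta$ est un morphisme de groupes (comme cobord d'une suite exacte de faisceaux log plats) et que $\pi^{\rm log}$ en est un d'apr\`es \ref{loghomo} ; l'application $\psi^{\cl}$ ne l'est pas en g\'en\'eral, puisque $\cl$ ne l'est pas (exemple \ref{CDNconstant}).

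La partie la plus d\'elicate est la comparaison avec la construction ant\'erieure. Soit $y\in\A^{t,0}(S)$ : son image dans $\Phi'$ est nulle, donc orthogonale \`a toute image de $G$ dans $\Phi$ sous l'accouplement de monodromie, de sorte que le th\'eor\`eme \ref{thintro1}$(ii)$ force $\Delta(y)$ \`a appartenir \`a $H^1_{\fppf}(S,G^D)\subseteq H^1_{\kpl}(S,G^D)$. Il reste alors \`a \'etablir deux points : d'une part, que $\Delta|_{\A^{t,0}(S)}$ co\"incide avec le cobord $\delta\circ\gamma$ de \cite{gil1,gil2} ; d'autre part, que $\cl$ et $\pi^{\rm log}$ restreints aux torseurs fppf redonnent l'invariant de Waterhouse, modulo l'injection $\pic(G)\hookrightarrow H^1_{\kpl}(G,\gm)$ pour le second. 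Le second point est une lecture directe des d\'efinitions \ref{mapcl} et \ref{loghomo}, qui ont \'et\'e con\c{c}ues pour g\'en\'eraliser $\pi$. Pour le premier, on utilise l'unicit\'e du prolongement de la biextension de Weil $W_U$, \`a orthogonalit\'es fix\'ees, qui force le prolongement log plat de \cite{gil5} et le prolongement fppf $W^{\Gamma,0}$ \`a co\"incider sur leur lieu commun $\A^{\Gamma}\times_S\A^{t,0}$ ; la fonctorialit\'e du cobord vis-\`a-vis du morphisme de sites $\kpl\to\fppf$ donne alors la co\"incidence cherch\'ee. C'est dans cette derni\`ere \'etape de comparaison de biextensions et de cobords que se concentre la principale v\'erification technique.
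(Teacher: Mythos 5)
Votre d\'emonstration est correcte et suit pour l'essentiel la m\^eme d\'emarche que l'article : celui-ci pose de m\^eme $\psi^{\cl}=\cl\circ\Delta$ et $\psi^{\rm log}=\pi^{\rm log}\circ\Delta$, la bonne d\'efinition de $\cl$ sur l'image de $\Delta$ \'etant assur\'ee par la proposition \ref{loclibre} et le corollaire \ref{CHtame}, exactement comme vous le faites. La comparaison avec $\psi$ sur $\A^{t,0}(S)$ y est trait\'ee par le m\'ecanisme m\^eme que vous esquissez, \`a savoir la formule de pull-back du lemme \ref{geomdesk} (l'extension $(i_G\times y)^*(W^{\rm log})$ correspond \`a $\Delta(y)$ via le corollaire \ref{isocano}, en reprenant les arguments de \cite{gil1}) combin\'ee \`a l'unicit\'e du prolongement de la biextension de Weil.
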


Supposons que $\A_{\eta}$ soit une courbe elliptique \`a r\'eduction semi-stable, et que l'ordre de $G$ soit premier \`a $6$. Alors (voir \cite{gil1} et \cite{gil2}) le morphisme $\psi:\A^{t,0}(S) \rightarrow \pic(G)$ s'annule sur les points de torsion --- ce qui g\'en\'eralise un r\'esultat d\'emontr\'e par Taylor, Srivastav, Agboola et Pappas dans le cas de bonne r\'eduction.
Cependant, sous les m\^emes hypoth\`eses, on ne peut pas esp\'erer que $\psi^{\rm log}$ ou $\psi^{\cl}$ s'annule sur les points de torsion qui tombent en dehors de la composante neutre.

Supposons $S$ affine. Dans le langage des alg\`ebres de Hopf, nos r\'esultats se traduisent de la fa\c{c}on suivante : soit $H$ l'alg\`ebre de Hopf de $G$, alors \`a tout point $y\in \A^t(S)$ on peut associer l'alg\`ebre $L(y)$ du torseur $\Delta(y)$, laquelle est munie d'une structure de $H^*$-objet mod\'er\'e au sens de Childs et Hurley (notons que $L(y)$ est l'ordre associ\'e \`a $H^*$ dans l'alg\`ebre du torseur $\Delta_U(y)$). De plus, l'alg\`ebre $L(y)$ est un $H^*$-objet galoisien si et seulement si l'image de $y$ dans $\Phi'$ est orthogonale \`a l'image de $G$ dans $\Phi$ sous l'accouplement de monodromie. Notons enfin que $\psi^{\cl}(y)$ est nul si et seulement si $L(y)\otimes_H (H^*)^{-1}$ est libre en tant que $H$-module.

R\'esumons bri\`evement le contenu de cet article. Dans la section \ref{deux}, nous rappelons les d\'efinitions de base concernant la cat\'egorie des log sch\'emas et les topologies de Grothendieck sur celle-ci (topologies Kummer log \'etale et Kummer log plate).

Dans la section \ref{trois}, nous \'etudions les propri\'et\'es des torseurs, pour la topologie Kummer log plate, sous un sch\'ema en groupes commutatif fini localement libre. Nous donnons une interpr\'etation des rev\^etements cycliques uniformes de Arsie et Vistoli \cite{av} en termes de $\mu_n$-torseurs. Puis nous montrons le th\'eor\`eme \ref{CEPTtame} qui relie les torseurs log plats aux actions mod\'er\'ees au sens de \cite{cept}.

Dans la section \ref{quatre}, nous d\'efinissons le morphisme $\pi^{\rm log}$ de structure galoisienne logarithmique, ainsi que l'application $\cl$ mesurant la structure galoisienne classique. Nous explicitons ces deux objets dans le cas particulier des $\mu_n$-torseurs. Le dernier paragraphe est consacr\'e \`a la d\'emonstration des th\'eor\`emes \ref{thintro1} et \ref{thintro2}.

%%%%%%%%%%%%%%%%%%%%%%%%%%%%%%%%%%%%%%%%%%%%%

\section{Log sch\'emas et topologies}
\label{deux}

%%%%%%%%%%%%%%%%%%%%%%%%%%%%%%%%%%%%%%%%%%%%%

\subsection{D\'efinitions et notations}

Nous rappelons ici succintement les d\'efinitions de base concernant les log sch\'emas. Pour plus de d\'etails, nous renvoyons le lecteur \`a l'article (de survol) d'Illusie \cite{illusie}, ainsi qu'\`a celui de Kato \cite{kato1}.

Tous les mono\"ides consid\'er\'es ici sont commutatifs unitaires. Le groupe des fractions totales d'un mono\"ide $P$ est not\'e $P^{\rm gp}$. On dit qu'un mono\"ide $P$ est int\`egre si le morphisme canonique $P\rightarrow P^{\rm gp}$ est injectif.  On dit qu'un mono\"ide $P$ est fin s'il est int\`egre et de type fini. On dit qu'un mono\"ide $P$ est satur\'e s'il est int\`egre et satisfait la condition suivante : pour tout $a\in P^{\rm gp}$, s'il existe un entier $n\geq 1$ tel que $a^n\in P$, alors $a\in P$. Un mono\"ide fs est un mono\"ide fin et satur\'e.

Une pr\'e-log structure sur un sch\'ema $X$ est la donn\'ee d'un faisceau de mono\"ides $M$ sur le petit site \'etale de $X$ et d'un morphisme $\alpha:M\rightarrow \mathcal{O}_X$ de faisceaux de mono\"ides, $\mathcal{O}_X$ \'etant ici muni de sa loi de multiplication. Une log structure est une pr\'e-log structure telle que $\alpha$ induise un isomorphisme $\alpha^{-1}(\mathcal{O}_X^*)\rightarrow \mathcal{O}_X^*$. \'Etant donn\'e une pr\'e-log structure, il existe une log structure satisfaisant la propri\'et\'e universelle qui s'entend, on l'appelle la log structure associ\'ee. Un log sch\'ema est un couple $(X,\alpha:M\rightarrow \mathcal{O}_X)$ o\`u $X$ est un sch\'ema et $\alpha:M\rightarrow \mathcal{O}_X$ est une log structure sur $X$. La notion de morphisme de log sch\'emas se d\'efinit de fa\c{c}on naturelle.

Le faisceau de mono\"ides d'un log sch\'ema $X$ est usuellement not\'e $M_X$. Le faisceau $\mathcal{O}_X^*$ est vu comme un sous-faisceau de $M_X$ gr\^ace au morphisme $\alpha$, le faisceau quotient $M_X/\mathcal{O}_X^*$ est alors not\'e $\overline{M}_X$.

Si $X$ est un sch\'ema, l'inclusion $\mathcal{O}_X^*\subset \mathcal{O}_X$  d\'efinit une log structure sur $X$, que l'on appelle la log structure triviale. La cat\'egorie des sch\'emas s'identifie ainsi \`a une sous-cat\'egorie pleine de celle des log sch\'emas. Plus \'esot\'eriquement,  ce foncteur d'inclusion $X\mapsto (X,\mathcal{O}_X^*\subset \mathcal{O}_X)$ est l'adjoint \`a droite du foncteur d'oubli $(Y,M_Y\rightarrow \mathcal{O}_X)\mapsto Y$ de la cat\'egorie des log  sch\'emas dans celle des sch\'emas.

Si $f:Y\rightarrow X$ est un morphisme de log sch\'emas, on notera $f^{\circ}$ le morphisme sur les sch\'emas sous-jacents (\emph{i.e.} l'image de $f$ par le foncteur d'oubli). On dit qu'un morphisme de log sch\'emas $f:Y\rightarrow X$ est strict si la log structure de $Y$ est l'image r\'eciproque de la log structure de $X$ par le morphisme $f^{\circ}$.

Si $X$ est un log sch\'ema, alors le plus grand ouvert de Zariski de $X$ sur lequel la log structure induite est triviale s'appelle l'ouvert de trivialit\'e de $X$.

Nous dirons d'un log sch\'ema qu'il est n\oe{}th\'erien (resp. localement n\oe{}th\'erien) si son sch\'ema sous-jacent poss\`ede cette propri\'et\'e.

\begin{exm}
Soit $P$ un mono\"ide, et soit $\mathbb{Z}[P]$ la $\mathbb{Z}$-alg\`ebre librement engendr\'e par $P$. Alors $\Spec(\mathbb{Z}[P])$ est muni d'une log structure (dite canonique) associ\'ee \`a la pr\'e-log structure d\'efinie par l'inclusion $P\subset \mathbb{Z}[P]$. L'ouvert de trivialit\'e est $\Spec(\mathbb{Z}[P^{\rm gp}])$.
\end{exm}

Une carte d'un log sch\'ema $X$ est la donn\'ee d'un mono\"ide $P$ et d'un morphisme $P\rightarrow M_X$ induisant un isomorphisme sur les log structures associ\'ees (par abus de langage, on note encore $P$ le faisceau constant sur $X$ de valeur $P$). On dit que la carte $P\rightarrow M_X$ est model\'ee sur $P$. De fa\c{c}on \'equivalente, une carte de $X$ est la donn\'ee d'un mono\"ide $P$ et d'un morphisme strict de log sch\'emas $X\rightarrow \Spec(\mathbb{Z}[P])$.

On dit qu'une log structure sur un sch\'ema $X$ est coh\'erente si, localement pour la topologie \'etale sur $X$, celle-ci admet une carte model\'ee sur un mono\"ide de type fini.

On dit qu'un log sch\'ema $X$ est fin (resp. fin et satur\'e) si sa log structure est coh\'erente, et si $M_X$ est un faisceau de mono\"ides int\`egres (resp. satur\'es). Un log sch\'ema fs est un log sch\'ema fin et satur\'e.

Soit $f:Y\rightarrow X$ un morphisme de log sch\'emas. Une carte de $f$ est la donn\'ee de cartes $P\rightarrow M_X$ et $Q\rightarrow M_Y$ pour $X$ et $Y$ respectivement, et d'un morphisme $u:P\rightarrow Q$ faisant commuter le diagramme
$$
\begin{CD}
P @>u>> Q \\
@VVV @VVV \\
f^{-1}M_X @>>> M_Y \\
\end{CD}
$$

Localement pour la topologie \'etale, tout morphisme entre deux log sch\'emas fins admet une carte.

On peut aussi exprimer la notion de carte d'un morphisme en termes de morphismes stricts. Soient $X\rightarrow \Spec(\mathbb{Z}[P])$ et $Y\rightarrow \Spec(\mathbb{Z}[Q])$ des cartes, et soit $u:P\rightarrow Q$ un morphisme compatible avec celles-ci. Alors $u$ est une carte de $f$ si et seulement si le morphisme $Y\rightarrow X\times_{\Spec(\mathbb{Z}[P])} \Spec(\mathbb{Z}[Q])$ est strict.

Dans la suite de cet article, tous les log sch\'emas consid\'er\'es seront fs. Toutes les op\'erations sur les log sch\'emas seront effectu\'ees dans la cat\'egorie des log sch\'emas fs.

\begin{exm}
\label{debase}
Soient $X$ un sch\'ema n\oe{}th\'erien r\'egulier, et $j:U\rightarrow X$ un ouvert dont le compl\'ementaire est un diviseur \`a croisements normaux sur $X$. Alors l'inclusion
$$
\mathcal{O}_X\cap j_*\mathcal{O}_U^*\longrightarrow \mathcal{O}_X
$$
induit une log structure fine et satur\'ee sur $X$. On dit qu'elle est d\'efinie par $U$, lequel est l'ouvert de trivialit\'e de cette log structure. Pour de plus amples d\'etails, on consultera \cite[paragraphe 2.1]{gil5}.
\end{exm}

%%%%%%%%%%%%%%%%%%%%%%%%%%%%%%%%%%%%%%%%%%%%%

\subsection{Topologies log \'etale et log plate}

Il s'agit d'abord de d\'efinir la log platitude, la log lissit\'e, et la log \'etalet\'e d'un morphisme de log sch\'emas. Nous reprenons les d\'efinitions de Kato.

\begin{dfn}
\label{logdefinitions}
Soit $f:Y\rightarrow X$ un morphisme de log sch\'emas fs. On dit que $f$ est log plat (resp. log lisse, resp. log \'etale) si localement pour la topologie fppf (resp. \'etale, resp. \'etale) sur $X$ et $Y$, $f$ admet une carte $u:P\rightarrow Q$ telle que
\begin{enumerate}
\item[$(i)$] Le morphisme $u^{\rm gp}:P^{\rm gp}\rightarrow Q^{\rm gp}$ est injectif (resp. est injectif et la partie de torsion de son conoyau est finie d'ordre inversible sur $X$, resp. est injectif et son conoyau est fini d'ordre inversible sur $X$) ;
\item[$(ii)$] Le morphisme strict $Y\rightarrow X\times_{\Spec(\mathbb{Z}[P])} \Spec(\mathbb{Z}[Q])$  est plat (resp. lisse, resp. \'etale) au sens classique.
\end{enumerate}
\end{dfn}

Nous allons introduire la notion de morphisme kumm\'erien, ce qui a pour effet principal d'imposer la finitude du conoyau de $u^{\rm gp}$ dans la d\'efinition ci-dessus.

\begin{dfn}
Soit $u:P\rightarrow Q$ un morphisme de mono\"ides fins et satur\'es. On dit que $u$ est kumm\'erien (ou de Kummer) s'il est injectif, et si pour tout $a\in Q$, il existe un entier $n\geq 1$ tel que $a^n\in u(P)$.
\end{dfn}

\begin{rmq}
Soit $u:P\rightarrow Q$ un morphisme injectif de mono\"ides fins et satur\'es. Alors les assertions suivantes sont \'equivalentes :
\begin{enumerate}
\item[(a)] $u$ est kumm\'erien ;
\item[(b)] $u$ est exact et $\coker(u^{\rm gp})$ est un groupe fini.
\end{enumerate}
(on dit que le morphisme $u$ est exact si $P=(u^{\rm gp})^{-1}(Q)$ dans $P^{\rm gp}$).
\end{rmq}

\begin{dfn}
Soit $f:Y\rightarrow X$ un morphisme de log sch\'emas fins et satur\'es. On dit que $f$ est kumm\'erien si, pour tout point $y\in Y$, le morphisme induit $\overline{M}_{X,f(y)}\rightarrow \overline{M}_{Y,y}$ est kumm\'erien.
\end{dfn}

Nous introduisons \`a pr\'esent les topologies que nous utiliserons dans la cat\'egorie des log sch\'emas fins et satur\'es.
Nous nous appuyons sur les travaux de Kato \cite{kato2} et Nizio\l{} \cite{niziol} pour les d\'efinitions et propri\'et\'es de ces topologies.

\begin{dfn}
On d\'efinit deux topologies de Grothendieck dans la cat\'egorie des log sch\'emas fins et satur\'es, de la fa\c{c}on suivante.
\begin{enumerate}
\item[$(i)$] La topologie \emph{Kummer log \'etale} est la topologie la moins fine telle que les familles ensemblistement surjectives de morphismes log \'etales kumm\'eriens soient couvrantes.
\item[$(ii)$] La topologie \emph{Kummer log plate} est la topologie la moins fine telle que les familles ensemblistement surjectives de morphismes log plats, kumm\'eriens et localement de pr\'esentation finie sur les sch\'emas sous-jacents, soient couvrantes.
\end{enumerate}
\end{dfn}

Pour \'epargner au lecteur de trop nombreuses r\'ep\'etitions, nous omettrons tant\^ot le log, tant\^ot le Kummer dans notre usage quotidien de la terminologie ci-dessus.

Un premier point important : les pr\'efaisceaux repr\'esentables (par des log sch\'emas) sont des faisceaux, ce qui s'exprime de la fa\c{c}on suivante.

\begin{thm}
La topologie Kummer \'etale est moins fine que la topologie Kummer plate, laquelle est moins fine que la topologie canonique.
\end{thm}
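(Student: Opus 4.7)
Le plan de la démonstration consiste à traiter successivement les deux inclusions annoncées.

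Pour la première assertion (la topologie Kummer étale est moins fine que la topologie Kummer plate), il suffit de vérifier que toute famille couvrante pour la topologie Kummer log étale est une famille couvrante pour la topologie Kummer log plate. Concrètement, je partirais d'une carte $u:P\rightarrow Q$ locale d'un morphisme log étale kummérien $f:Y\rightarrow X$ comme dans la définition \ref{logdefinitions}. Par définition de la log étaleté, $u^{\rm gp}$ est injectif et son conoyau est fini d'ordre inversible sur $X$, et le morphisme strict $Y\rightarrow X\times_{\Spec(\mathbb{Z}[P])} \Spec(\mathbb{Z}[Q])$ est étale au sens classique. Tout étale classique est plat et localement de présentation finie, donc $f$ est à la fois log plat, kummérien (la torsion du conoyau de $u^{\rm gp}$ étant évidemment le conoyau lui-même qui est fini), et localement de présentation finie sur les schémas sous-jacents. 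Par ailleurs, la topologie Kummer log étale étant définie par voisinages étales et la Kummer log plate par voisinages fppf, une famille ensemblistement surjective de morphismes log étales kummériens reste trivialement ensemblistement surjective. Ceci fournit directement l'inclusion annoncée.

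Pour la seconde assertion (la topologie Kummer log plate est moins fine que la topologie canonique), il s'agit de démontrer que tout préfaisceau représentable (par un log schéma fs) sur la catégorie des log schémas fs est un faisceau pour la topologie Kummer log plate. Quitte à prendre la réunion disjointe, il suffit de montrer que, pour tout morphisme $f:Y\rightarrow X$ Kummer log plat et surjectif, et pour tout log schéma fs $T$, la suite
$$
\homr(X,T)\longrightarrow \homr(Y,T) \rightrightarrows \homr(Y\times_X Y,T)
$$
est exacte. C'est le point central, et c'est là que réside la principale difficulté. La stratégie consiste à se ramener, localement pour la topologie fppf sur les schémas sous-jacents, au cas d'un morphisme standard obtenu par changement de base, c'est-à-dire $f:X\times_{\Spec(\mathbb{Z}[P])} \Spec(\mathbb{Z}[Q])\rightarrow X$ où $u:P\rightarrow Q$ est un morphisme kummérien de monoïdes fs. Dans ce cas modèle, la descente se scinde en deux volets : la descente fppf classique pour les morphismes de schémas (cf. SGA), et la descente des log structures fs le long du revêtement kummérien, qui découle de la description explicite de $M_Y$ à partir de la carte $Q$ et de la saturation (la donnée d'un morphisme vers $T$ revient à celle de morphismes sur chaque composante, compatibles en tous les produits fibrés). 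Le passage du cas modèle au cas général repose sur l'existence locale de cartes adaptées pour tout morphisme log plat entre log schémas fs.

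Le principal obstacle est le second volet, à savoir la descente effective des morphismes log pour les revêtements Kummer log plats, qui est un résultat technique non trivial. Je renverrais pour cela aux travaux de Kato \cite{kato2} et de Nizio\l{} \cite{niziol}, où cette descente est établie dans le cadre fs et la sous-canonicité de la topologie Kummer log plate est démontrée de façon détaillée ; son corollaire immédiat est la sous-canonicité de la topologie Kummer log étale, qui lui est moins fine d'après la première partie.
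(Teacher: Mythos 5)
Votre démonstration est correcte et suit essentiellement la même voie que celle de l'article : la première inclusion se vérifie directement sur les définitions (tout morphisme log \'etale kumm\'erien est log plat, kumm\'erien et localement de pr\'esentation finie, ce que l'article qualifie simplement de \og clair \fg), et la sous-canonicit\'e de la topologie Kummer log plate est renvoy\'ee \`a Kato \cite[Theorem 3.1]{kato2} et Nizio\l{} \cite[Theorem 2.20]{niziol}, exactement comme dans le texte. Votre esquisse de descente (fppf classique plus descente des log structures le long des rev\^etements kumm\'eriens standards) est un r\'esum\'e fid\`ele de la preuve cit\'ee, mais n'apporte pas d'argument nouveau par rapport \`a la r\'ef\'erence.
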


\begin{proof}
Il est clair que la topologie Kummer \'etale est moins fine que la topologie Kummer plate. Le fait que cette derni\`ere soit moins fine que la topologie canonique est d\^u \`a Kato (voir \cite[Theorem 3.1]{kato2} ou \cite[Theorem 2.20]{niziol}).
\end{proof}

%%%%%%%%%%%%%%%%%%%%%%%%%%%%%%%%%%%%%%%%%%%%%

\subsection{Torseurs kumm\'eriens standards}
\label{pardeuxtrois}

Nous allons voir \`a pr\'esent un proc\'ed\'e de construction de torseurs (pour la topologie log plate) sous des sch\'emas en groupes diagonalisables. La preuve est la m\^eme qu'en topologie Kummer \'etale (voir \cite[Proposition 3.2]{illusie}).

Nous adoptons temporairement la convention suivante : si $M$ est un mono\"ide fs, nous noterons $D(M)$ le sch\'ema $\Spec(\mathbb{Z}[M])$ muni de sa structure canonique de log sch\'ema fs. Dans le cas o\`u $M$ est un groupe ab\'elien, $D(M)$ est le sch\'ema en groupes diagonalisable associ\'e \`a $M$, muni de la log structure triviale.

\begin{propdfn}
\label{kummertorsor}
Soit $X$ un log sch\'ema muni d'une carte $P\rightarrow M_X$, et soit $u:P\rightarrow Q$ un morphisme kumm\'erien de mono\"ides fins et satur\'es. Alors la projection naturelle
$$
Y:=X\times_{\Spec(\mathbb{Z}[P])} \Spec(\mathbb{Z}[Q])\longrightarrow X
$$
est un torseur pour la topologie Kummer plate sous le groupe $D(Q^{\rm gp}/u(P^{\rm gp}))$. Un torseur obtenu de la sorte s'appelle un torseur kumm\'erien standard.
\end{propdfn}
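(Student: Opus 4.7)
La preuve se déroule en trois temps : (1) définir l'action de $G:=D(Q^{\rm gp}/u(P^{\rm gp}))$ sur $Y$ ; (2) vérifier que $Y\to X$ est un recouvrement pour la topologie Kummer log plate ; (3) établir la structure de torseur en calculant $Y\times_X Y$ dans la catégorie fs.

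La condition kummérienne sur $u$ assure que $u^{\rm gp}$ est injectif de conoyau fini, d'où $G$ est un groupe fini diagonalisable sur $\mathbb{Z}$. L'algèbre $\mathbb{Z}[Q]$ porte une graduation naturelle par $Q^{\rm gp}/u(P^{\rm gp})$ en tant que $\mathbb{Z}[P]$-algèbre (via $q\mapsto \overline{q}$), ce qui fournit une action de $G$ sur $\Spec(\mathbb{Z}[Q])$ au-dessus de $\Spec(\mathbb{Z}[P])$. Par changement de base le long de la carte $X\to \Spec(\mathbb{Z}[P])$, on en déduit une action de $G_X$ sur $Y$ au-dessus de $X$.

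Pour vérifier que $Y\to X$ est un recouvrement au sens de la définition \ref{logdefinitions}, on exploite directement la carte $u$ : $u^{\rm gp}$ est injectif, $u$ est localement de présentation finie (car $Q$ est fin), et le morphisme $\mathbb{Z}[P]\to\mathbb{Z}[Q]$ est fidèlement plat --- fait standard pour les morphismes kummériens de monoïdes fs, obtenu par décomposition de $\mathbb{Z}[Q]$ en somme directe de $\mathbb{Z}[P]$-modules projectifs de rang un indexés par $Q^{\rm gp}/u(P^{\rm gp})$. Ceci entraîne la log-platitude et la surjectivité ensembliste de $Y\to X$, tandis que le caractère kummérien sur les $\overline{M}$ est hérité de celui de $u$.

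L'étape décisive est la vérification de la structure de torseur : on veut que l'application naturelle
$$G_X\times_X Y\longrightarrow Y\times_X Y,\qquad (g,y)\longmapsto (y,g\cdot y)$$
soit un isomorphisme, les produits fibrés étant pris dans la catégorie fs. Par fonctorialité, il suffit de traiter le cas universel $X=\Spec(\mathbb{Z}[P])$ ; la question se ramène alors au calcul monoïdal suivant : le coproduit $Q\oplus_P Q$ dans la catégorie des monoïdes fs s'identifie à $Q\oplus(Q^{\rm gp}/u(P^{\rm gp}))$ via $(q_1,q_2)\mapsto (q_1+q_2,\overline{q_2}-\overline{q_1})$. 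C'est précisément ici que réside la principale difficulté, car les coproduits dans la catégorie fs ne sont pas exacts en général, la saturation pouvant engendrer des éléments parasites ; mais l'hypothèse kummérienne, sous sa forme équivalente d'exactitude $P=(u^{\rm gp})^{-1}(Q)$, exclut ce phénomène et garantit l'identification voulue. Une fois ce lemme combinatoire établi, appliquer $\Spec(\mathbb{Z}[-])$ donne l'isomorphisme requis des produits fibrés, et la compatibilité avec l'action se lit directement sur les graduations.
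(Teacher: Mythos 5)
Votre architecture de preuve est en substance celle du texte : l'article \'etablit lui aussi l'isomorphisme
$$
D(Q^{\rm gp}/u(P^{\rm gp}))\times D(Q)\simeq D(Q)\times_{D(P)} D(Q)
$$
(homog\'en\'eit\'e principale formelle), puis conclut par le fait que $Y\to X$ est un \'epimorphisme Kummer plat ; la seule diff\'erence est que le calcul mono\"idal que vous refaites \`a la main est d\'el\'egu\'e \`a \cite[Remark 3.4, (c)]{illusie}. Mais deux de vos \'etapes sont fausses telles qu'\'ecrites. D'abord, la fl\`eche annonc\'ee $(q_1,q_2)\mapsto(q_1+q_2,\overline{q_2}-\overline{q_1})$ n'est pas un isomorphisme d\`es que $Q^{\rm gp}/u(P^{\rm gp})$ poss\`ede de la $2$-torsion : pour la carte kumm\'erienne la plus basique $u=[2]:\mathbb{N}\to\mathbb{N}$, la classe de $(1,-1)$ est non nulle dans $Q^{\rm gp}\oplus_{P^{\rm gp}}Q^{\rm gp}=\mathbb{Z}^2/\langle(2,-2)\rangle$ mais s'envoie sur $(0,\overline{-1}-\overline{1})=(0,0)$. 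La bonne fl\`eche est $(q_1,q_2)\mapsto(q_1+q_2,\overline{q_2})$, dont le noyau est exactement le sous-groupe des $(u^{\rm gp}(p),-u^{\rm gp}(p))$ ; avec cette correction, votre argument de saturation (tout \'el\'ement de $Q\oplus G$ admet un multiple dans l'image, $G$ \'etant fini) fonctionne. Signalons au passage une erreur d'attribution : ce calcul n'utilise que la finitude du conoyau de $u^{\rm gp}$, et non l'exactitude $P=(u^{\rm gp})^{-1}(Q)$ ; celle-ci sert ailleurs, \`a garantir que $Y\to X$ est kumm\'erien, donc admissible comme famille couvrante pour la topologie.

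Ensuite, l'affirmation selon laquelle $\mathbb{Z}[P]\to\mathbb{Z}[Q]$ serait fid\`element plat pour tout morphisme kumm\'erien de mono\"ides fs, par d\'ecomposition en $\mathbb{Z}[P]$-modules projectifs de rang un, est fausse. Contre-exemple : $P=\langle(2,0),(1,1),(0,2)\rangle\subset Q=\mathbb{N}^2$, c'est-\`a-dire $\mathbb{Z}[x^2,xy,y^2]\subset\mathbb{Z}[x,y]$ ; ce morphisme est kumm\'erien mais non int\`egre, et la composante gradu\'ee impaire $x\mathbb{Z}[P]+y\mathbb{Z}[P]$ est de rang g\'en\'erique un mais exige deux g\'en\'erateurs en l'id\'eal d'augmentation (la syzygie $(xy)\cdot x-(x^2)\cdot y=0$ l'emp\^eche d'\^etre libre), donc n'est pas projective. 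Pour $u$ injectif, la platitude de $\mathbb{Z}[P]\to\mathbb{Z}[Q]$ \'equivaut \`a l'int\'egrit\'e de $u$ d'apr\`es \cite[Proposition 4.1]{kato1} --- c'est pr\'ecis\'ement pourquoi le lemme \ref{quisuit} du texte se restreint aux cartes model\'ees sur $\mathbb{N}^r$ et aux mono\"ides $Q$ sans torsion. Heureusement, cette platitude n'est pas n\'ecessaire \`a votre conclusion : la log platitude de $Y\to X$ d\'ecoule directement de la d\'efinition \ref{logdefinitions} via la carte $u$ (le morphisme strict de la condition $(ii)$ \'etant ici l'identit\'e), et la surjectivit\'e ensembliste se v\'erifie fibre \`a fibre au-dessus des log points, o\`u la condition kumm\'erienne permet de prolonger un morphisme de mono\"ides $P\to k$ \`a $Q$ quitte \`a agrandir le corps. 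Une fois ces deux r\'eparations faites, votre d\'emonstration est correcte et revient \`a red\'emontrer la remarque d'Illusie que l'article se contente de citer.
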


\begin{proof}
D'apr\`es \cite[Remark 3.4, (c)]{illusie}, sous les hypoth\`eses envisag\'ees, le morphisme $D(Q)\rightarrow D(P)$ est formellement principal homog\`ene pour le groupe $D(Q^{\rm gp}/u(P^{\rm gp}))$. En d'autres termes, nous avons un isomorphisme
$$
D(Q^{\rm gp}/u(P^{\rm gp}))\times D(Q)\rightarrow D(Q)\times_{D(P)} D(Q),\quad (g,x)\mapsto (x, g.x)
$$
dans la cat\'egorie des log sch\'emas fs. Donc, par changement de base $X\rightarrow D(P)$, le morphisme $Y\rightarrow X$ est formellement principal homog\`ene de groupe $D(Q^{\rm gp}/u(P^{\rm gp}))$. D'autre part, il d\'ecoule ais\'ement des d\'efinitions que le morphisme $Y\rightarrow X$ est un \'epimorphisme pour la topologie Kummer plate, donc est un torseur.
\end{proof}

\begin{lem}
\label{niziolstyle}
Soit $f:Y\rightarrow X$ un morphisme de log sch\'emas, et soit $\alpha:P\rightarrow M_X$ une carte de $X$. On suppose que $f$ est log plat (resp. log lisse, resp. log \'etale).
\begin{enumerate}
\item[$(1)$] Localement pour la topologie fppf (resp. \'etale, resp. \'etale) sur $X$ et $Y$, $f$ admet une carte $u:P\rightarrow Q$ prolongeant $\alpha$ et satisfaisant les propri\'et\'es de la d\'efinition \ref{logdefinitions}.
\item[$(2)$] Si $f$ est kumm\'erien, alors $u$ l'est \'egalement.
\item[$(3)$] Si $P$ est sans torsion, alors on peut choisir $Q$ sans torsion.
\end{enumerate}
\end{lem}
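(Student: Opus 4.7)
Le plan est de partir de la D\'efinition \ref{logdefinitions}, qui garantit, localement pour la topologie fppf (resp. \'etale, resp. \'etale) sur $X$ et $Y$, l'existence d'une carte $u_0 : P_0 \to Q_0$ de $f$ v\'erifiant $(i)$ et $(ii)$, puis de la modifier pour qu'elle prolonge la carte donn\'ee $\alpha : P \to M_X$. L'obstacle \`a conclure directement est que la carte $\alpha_0 : P_0 \to M_X$ induite par $u_0$ diff\`ere en g\'en\'eral de $\alpha$, bien que toutes deux d\'efinissent la m\^eme log structure $M_X$.

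L'id\'ee cl\'e est de comparer $\alpha$ et $\alpha_0$ puis de combiner les cartes via un pushout fs. Les deux cartes d\'efinissant le m\^eme $M_X$, et $P, P_0$ \'etant de type fini, on peut, apr\`es une localisation \'etale suppl\'ementaire sur $X$, \'ecrire chaque g\'en\'erateur de $P_0$ comme produit d'une unit\'e de $\mathcal{O}_X$ par l'image d'un \'el\'ement de $P^{\rm gp}$, et sym\'etriquement. Ceci fournit un mono\"ide fs interm\'ediaire $P_1$, obtenu comme satur\'ee dans $P^{\rm gp} \oplus \mathcal{O}_X^*$ du sous-mono\"ide engendr\'e par les images de $P$ et $P_0$ ; $P_1$ est une carte de $X$ recevant $P$ canoniquement et factorisant $\alpha_0$. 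On forme alors le pushout $Q_1 := Q_0 \oplus_{P_0} P_1$ dans la cat\'egorie des mono\"ides fs ; le morphisme canonique $u_1 : P_1 \to Q_1$ est une carte de $f$ dont le conoyau sur les groupes co\"incide avec celui de $u_0^{\rm gp}$ (par propri\'et\'e universelle du pushout), et dont le morphisme strict associ\'e s'obtient par changement de base \`a partir de celui de $u_0$, si bien que $(i)$ et $(ii)$ sont pr\'eserv\'ees. La carte cherch\'ee est alors la composition $u := (P \hookrightarrow P_1 \to Q_1)$, avec $Q := Q_1$.

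Pour $(2)$, la propri\'et\'e kumm\'erienne de $f$ s'exprime sur les fl\`eches $\overline{M}_{X, f(y)} \to \overline{M}_{Y, y}$, que la carte $u$ induit modulo les unit\'es ; $u^{\rm gp}$ est donc exact et de conoyau fini, d'o\`u le caract\`ere kumm\'erien de $u$. Pour $(3)$, si $P^{\rm gp}$ est sans torsion, la torsion de $Q_1^{\rm gp}$ est finie et provient enti\`erement de la contribution de $Q_0^{\rm gp}$ modulo $P_0^{\rm gp}$ ; apr\`es une localisation \'etale suppl\'ementaire sur $Y$ qui trivialise le $\mu_n$-torseur associ\'e \`a cette torsion, on peut absorber celle-ci dans les unit\'es et remplacer $Q$ par un quotient fs sans torsion donnant la m\^eme carte. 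La difficult\'e principale r\'eside dans la v\'erification que le pushout fs de l'\'etape pr\'ec\'edente reste satur\'e et pr\'eserve la platitude (resp. lissit\'e, resp. \'etalet\'e) du morphisme strict \`a travers les localisations successives ; ces v\'erifications, techniques mais standard, rel\`event du formalisme d\'evelopp\'e par Kato et Nizio\l{}.
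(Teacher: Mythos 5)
Your proposal reconstructs from scratch what the paper settles by citation: the paper proves $(1)$ by invoking Nizio\l{} (\emph{K-theory of log-schemes I}, Lemma 2.8), declares $(2)$ clear, and proves $(3)$ by invoking Nakayama (Proposition A.2), with the remark that his log smooth argument remains valid in the log flat case. Your pushout strategy is indeed the skeleton of those proofs, but as written it has genuine gaps exactly at the decisive points, and deferring them to the \og formalisme d\'evelopp\'e par Kato et Nizio\l{} \fg{} is circular, since the statement under proof \emph{is} Nizio\l{}'s Lemma 2.8. Concretely: the claim that condition $(ii)$ de la d\'efinition \ref{logdefinitions} se pr\'eserve \og par changement de base \fg{} \`a travers le pushout fs $Q_1=Q_0\oplus_{P_0}P_1$ n'est pas formelle, car $\mathbb{Z}[Q_1]$ n'est pas $\mathbb{Z}[Q_0]\otimes_{\mathbb{Z}[P_0]}\mathbb{Z}[P_1]$ (l'int\'egralisation et la saturation modifient l'anneau), donc la platitude (resp. lissit\'e, \'etalet\'e) du nouveau morphisme strict ne s'en d\'eduit pas; de plus, la carte finalement exig\'ee est $u:P\to Q_1$, et la condition $(ii)$ doit alors \^etre v\'erifi\'ee au-dessus de $\Spec(\mathbb{Z}[P])$ et non de $\Spec(\mathbb{Z}[P_1])$. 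Dans les cas log lisse et log \'etale, le conoyau de $P^{\rm gp}\to Q_1^{\rm gp}$ re\c{c}oit en outre la contribution $P_1^{\rm gp}/P^{\rm gp}$ provenant des unit\'es adjointes; il faut choisir ce groupe libre de type fini pour que les conditions de torsion de $(i)$ survivent, ce que votre construction de $P_1$ dans $P^{\rm gp}\oplus\mathcal{O}_X^*$ (un mono\"ide non de type fini) ne garantit pas telle quelle.

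Deux points suppl\'ementaires. Pour $(2)$, le caract\`ere kumm\'erien de $f$ porte sur les fl\`eches $\overline{M}_{X,f(y)}\to\overline{M}_{Y,y}$; pour une carte quelconque, $\overline{M}_{Y,y}$ est un quotient $Q/F$ par une face d'unit\'es, et la condition de puissance pour $u$ (\`a savoir $a^n\in u(P)$ pour tout $a\in Q$) ne s'en d\'eduit pas, sauf si la carte est exacte au point g\'eom\'etrique consid\'er\'e --- propri\'et\'e que la construction de $(1)$ doit arranger et que votre argument suppose implicitement. Pour $(3)$, votre localisation \emph{\'etale} trivialisant le $\mu_n$-torseur associ\'e \`a la torsion n'est l\'egitime que lorsque $n$ est inversible, c'est-\`a-dire dans les cas log lisse et log \'etale; dans le cas log plat, l'ordre de la torsion n'a aucune raison d'\^etre inversible et il faut localiser pour la topologie fppf --- c'est pr\'ecis\'ement l'adaptation de l'argument de Nakayama (\'ecrit dans le cas log lisse) \`a laquelle la preuve du papier fait allusion.
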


\begin{proof}
Voir \cite[lemma 2.8]{niziol} pour le premier point. Le second point est clair, et le troisi\`eme est d\'emontr\'e par Nakayama dans \cite[Proposition A.2]{nakayama} (ce dernier travaille avec un morphisme log lisse, mais son raisonnement reste valide dans le cas log plat).
\end{proof}

On en d\'eduit le r\'esultat suivant, dont l'analogue en topologie Kummer \'etale est bien connu (voir \cite[Corollary 3.6]{illusie}).

\begin{cor}
\label{vidal}
Soit $X$ un log sch\'ema. La topologie Kummer plate sur $X$ est engendr\'ee par les recouvrements fppf classiques et les torseurs kumm\'eriens standards.
\end{cor}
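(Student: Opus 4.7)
The plan is to show that given any Kummer log flat cover $f:Y\to X$, we can refine it, locally for the classical fppf topology on $X$, as a composition of a standard Kummer torsor followed by a strict classically flat morphism of finite presentation. Since the Kummer étale analogue (Corollary 3.6 of \cite{illusie}) is proved by exactly this strategy, the key point is to use Lemma \ref{niziolstyle}, whose third item is tailored for the log flat setting, in place of its Kummer étale counterpart.

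First, since $X$ is fs, it admits étale-locally (hence fppf-locally) a chart $\alpha:P\to M_X$ modeled on an fs monoid $P$; after an fppf refinement of $X$ we may assume such a chart exists. Apply Lemma \ref{niziolstyle} to $f$: up to a further classical fppf refinement of $X$ and $Y$, $f$ admits a chart $u:P\to Q$ extending $\alpha$ with $u$ Kummer (by parts (1) and (2) of the lemma), and such that the strict morphism
$$
g:Y\longrightarrow X\times_{\Spec(\mathbb{Z}[P])}\Spec(\mathbb{Z}[Q])
$$
is flat in the classical sense. By Proposition-définition \ref{kummertorsor}, the projection $X\times_{\Spec(\mathbb{Z}[P])}\Spec(\mathbb{Z}[Q])\to X$ is a standard Kummer torsor under $D(Q^{\rm gp}/u(P^{\rm gp}))$.

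It remains to check that $g$ is a classical fppf cover. Strictness gives that $g$ is a morphism between schemes with the same pulled-back log structure, so it reduces to a morphism of underlying schemes; it is flat by construction, and locally of finite presentation because $f$ is (by hypothesis on Kummer log flat morphisms) and so is the standard torsor. Surjectivity of $g$ is not automatic, but the family $\{f_i\}$ is ensemblistically surjective by definition of a Kummer log flat covering, and after composing with the Kummer torsor, one recovers a surjective family; concretely, one adjoins to $g$ the identity of $X\times_{\Spec(\mathbb{Z}[P])}\Spec(\mathbb{Z}[Q])$ restricted to an open complement if needed, or simply observes that the full family $\{Y_i\to X\}$ is covered by the combined data. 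The only real subtlety, which I expect to be the main technical point, is organizing the fppf refinements uniformly across the members of the original Kummer log flat family so that the decomposition yields a genuine refinement in the site-theoretic sense; this is handled by working entirely fppf-locally and invoking the third assertion of Lemma \ref{niziolstyle} to keep the monoids in the fs category.
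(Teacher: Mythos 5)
Your proposal follows essentially the route the paper itself intends: the paper gives no written proof of Corollaire \ref{vidal}, presenting it as an immediate consequence of Lemme \ref{niziolstyle}, in parallel with the Kummer \'etale case \cite[Corollary 3.6]{illusie} --- namely, the fppf-local factorization of a Kummer log flat morphism as a strict, classically flat and locally finitely presented morphism followed by the standard Kummer torsor attached to a chart $u:P\to Q$, exactly as you do. One concrete caveat on your handling of surjectivity: the fallback of adjoining to $g$ ``the identity of $X\times_{\Spec(\mathbb{Z}[P])}\Spec(\mathbb{Z}[Q])$ restricted to an open complement'' cannot work, because a refinement of the original covering must consist of morphisms that factor through the members $Y_i$, and such an added piece does not. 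The viable observation is the second one you mention --- joint surjectivity of all the local pieces across the whole family --- though even this deserves care, since the various pieces land in \emph{different} standard torsors, each flat piece covering only an open of its torsor; a clean way to close this (over a quasi-compact base, after \'etale localization) is to invoke \cite[Proposition 2.7]{kato2}, exactly as the paper does in the proof of the Th\'eor\`eme \ref{splitext}: one finds a single standard Kummer torsor $T\to X$ over which the pulled-back family becomes strict, hence a genuine classical fppf cover of $T$, whose composite with $T\to X$ refines the given Kummer flat cover. With that substitution your argument is complete and matches the paper's (implicit) proof.
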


%%%%%%%%%%%%%%%%%%%%%%%%%%%%%%%%%%%%%%%%%%%%%

\subsection{Comparaison avec la cohomologie classique}

Si $X$ est un sch\'ema (resp. un log sch\'ema), nous noterons $(Sch/X)$ (resp. $(fs/X)$) la cat\'egorie des sch\'emas (resp. des log sch\'emas fs) sur $X$. Nous utiliserons les symboles $\et$ et $\fppf$ pour d\'esigner les topologies \'etale et fppf classiques, ainsi que $\ket$ et $\kpl$ pour d\'esigner les topologies Kummer log \'etale et Kummer log plate.
Si $C$ est l'une de ces cat\'egories et si $\topo$ est l'une de ces topologies, nous noterons $C_{\topo}$ le site obtenu en munissant $C$ de la topologie $\topo$.
Le foncteur d'oubli induit un morphisme de sites
$$
\varepsilon :(fs/X)_{\kpl}\longrightarrow (Sch/X)_{\fppf}
$$

Soit $Y\rightarrow X$ un $X$-sch\'ema, et soit $h_Y$ le faisceau qu'il repr\'esente (\emph{i.e.} son foncteur des points); alors le faisceau $\varepsilon^*h_Y$ est repr\'esentable par le log sch\'ema obtenu en munissant $Y$ de la log structure image r\'eciproque de celle de $X$.

Si $G$ est un $X$-sch\'ema en groupes, nous noterons \'egalement (par abus de langage) $G$ le log sch\'ema en groupes $\varepsilon^*G$. Ainsi nous noterons $\gm$ le groupe multiplicatif et $\mu_n$ le groupe des racines $n$-i\`emes de l'unit\'e, que nous consid\`ererons indiff\'eremment comme des sch\'emas ou des log sch\'emas en groupes.

Soit $F$ un faisceau ab\'elien pour la topologie Kummer plate sur $X$. Par d\'erivation de foncteurs compos\'es, nous avons une suite spectrale
$$
H^p_{\fppf}(X,R^q\varepsilon_*F)\Longrightarrow H^{p+q}_{\kpl}(X,F).
$$

Consid\'erons le cas particulier o\`u $F=\varepsilon^*G$, avec $G$ un $X$-sch\'ema en groupes commutatif,
alors le morphisme d'adjonction $G\rightarrow \varepsilon_*\varepsilon^*G$ est un isomorphisme. On en d\'eduit, gr\^ace \`a la suite spectrale ci-dessus, une suite exacte
\begin{equation}
\label{spec1}
0\longrightarrow H^1_{\fppf}(X,G) \longrightarrow  H^1_{\kpl}(X,G) \longrightarrow  H^0(X,R^1\varepsilon_* G) \longrightarrow  H^2_{\fppf}(X,G)
\end{equation}

Dans les cas qui nous int\'eressent, le faisceau $R^1\varepsilon_* G$ a \'et\'e calcul\'e par Kato. Plus pr\'ecis\'ement, nous avons le r\'esultat suivant.

\begin{thm}
\label{epsilonkato}
Soient $X$ un log sch\'ema localement n\oe{}th\'erien, et $G$ un $X$-sch\'ema en groupes commutatif. On suppose que $G$ est fini localement libre sur $X$, ou que $G$ est affine lisse sur $X$. Alors on dispose d'un isomorphisme canonique
$$
R^1\varepsilon_* G \simeq \lim_{\longrightarrow} \Hom(\mu_n,G)\otimes_{\mathbb{Z}} \overline{M}_X^{\rm gp}
$$
la limite inductive \'etant prise par rapport aux projections canoniques $\mu_{mn}\rightarrow \mu_{n}$ avec $m,n\neq 0$.
\end{thm}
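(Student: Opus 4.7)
La preuve repose sur une r\'eduction, via le corollaire \ref{vidal}, du cas g\'en\'eral au cas $G=\mu_n$, o\`u un calcul direct fournit le r\'esultat. Comme $R^1\varepsilon_* G$ est la faisceautisation fppf du pr\'efaisceau $U\mapsto H^1_{\kpl}(U,G)$, on peut travailler sur les fibres : en se pla\c{c}ant au-dessus d'un sch\'ema strictement local, $H^1_{\fppf}(-,G)$ s'annule, et le corollaire \ref{vidal} garantit que tout \'el\'ement de $H^1_{\kpl}(X,G)$ est, apr\`es raffinement fppf, repr\'esent\'e par un torseur kumm\'erien standard.

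\medskip

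Le cas crucial est celui de $G=\mu_n$, qui se traite au moyen de la suite exacte de Kummer logarithmique dans la topologie Kummer plate,
$$
1\longrightarrow \mu_n \longrightarrow \gmlog \stackrel{n}{\longrightarrow} \gmlog \longrightarrow 1,
$$
valable pour tout entier $n\geq 1$ sans hypoth\`ese d'inversibilit\'e. Combin\'ee avec la suite exacte tautologique $0\rightarrow \gm\rightarrow \gmlog\rightarrow \overline{M}_X^{\rm gp}\rightarrow 0$ et avec Hilbert~90 pour $\gm$, elle fournit l'isomorphisme canonique
$$
R^1\varepsilon_*\mu_n \simeq \overline{M}_X^{\rm gp}\otimes_{\mathbb{Z}} \mathbb{Z}/n\mathbb{Z}.
$$

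\medskip

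Pour $G$ g\'en\'eral, on construit le morphisme voulu en associant \`a un couple $(f\colon \mu_n\rightarrow G,\, m\in \overline{M}_X^{\rm gp}/n)$ le $G$-torseur $T_m\times^{\mu_n}G$, obtenu par pouss\'ee en avant le long de $f$ du $\mu_n$-torseur $T_m$ construit \`a l'\'etape pr\'ec\'edente. La compatibilit\'e aux projections $\mu_{mn}\rightarrow \mu_n$ rend cette construction bien d\'efinie sur la limite inductive. La surjectivit\'e d\'ecoule \`a nouveau du corollaire \ref{vidal} combin\'e au lemme \ref{niziolstyle} : tout $G$-torseur kpl provient, localement pour fppf, d'un torseur kumm\'erien standard sous un groupe diagonalisable $D(A)$ avec $A$ fini ; en d\'ecomposant $A=\bigoplus_i\mathbb{Z}/n_i\mathbb{Z}$ et en utilisant la commutativit\'e de $G$, la structure de $G$-torseur se laisse d\'ecomposer en une collection de morphismes $\mu_{n_i}\rightarrow G$.

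\medskip

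Le point d\'elicat est l'injectivit\'e : il s'agit de montrer que deux donn\'ees kumm\'eriennes produisant des $G$-torseurs isomorphes (modulo fppf) deviennent \'egales dans la colimite $\varinjlim_n \Hom(\mu_n,G)\otimes \overline{M}_X^{\rm gp}$. C'est \`a ce stade que l'on utilise de mani\`ere essentielle les hypoth\`eses sur $G$ (fini plat ou affine lisse), qui assurent la repr\'esentabilit\'e du faisceau $\Hom(\mu_n,G)$ et l'annulation des groupes Ext pertinents. Le calcul pr\'ecis de $R^1\varepsilon_*G$ dans ce cadre est d\^u \`a Kato \cite{kato2} et repris par Nizio\l{} \cite{niziol}.
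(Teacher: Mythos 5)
The paper gives no argument of its own here: its proof of théorème \ref{epsilonkato} is just the citation \cite[Theorem 4.1]{kato2}, \cite[Theorem 3.12]{niziol}. Your sketch is a reconstruction of those cited proofs, and in outline it is the right one: the log-flat Kummer sequence for $\gmlog$ is exactly what the paper itself singles out, right after the statement, as \emph{le principal ingr\'edient}, and your pushforward $(f,m)\mapsto T_m\times^{\mu_n}G$ is the map $b_n(h,a)=H^1(h)(\delta(a))$ that the paper recalls in the proof of th\'eor\`eme \ref{CEPTtame}. Your closing deferral to Kato and Nizio\l{} for the delicate injectivity is no worse than what the paper does wholesale.

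Two of your intermediate steps, however, are wrong or unsupported as written. First, $H^1_{\fppf}(-,G)$ does \emph{not} vanish over strictly local schemes: for $R$ the strict henselization of $\mathbb{Z}_p$ one has $H^1_{\fppf}(\Spec(R),\mu_p)=R^*/(R^*)^p\neq 0$ (for instance $1+p$ is not a $p$-th power, since any solution $1+y$ forces $y$ into the maximal ideal and then $(1+y)^p\equiv 1$ modulo the square of that ideal); moreover the stalks of an fppf sheafification are not computed on strictly henselian local rings — those are the points of the \'etale topos, not of the fppf topos. The correct and sufficient statement is that degree-one fppf classes die after an fppf localization, because a representable fppf torsor is trivialized by itself, which is an fppf cover; this is what makes $R^1\varepsilon_*G$ the fppf sheafification of $T\mapsto H^1_{\kpl}(T,G)/H^1_{\fppf}(T,G)$, consistently with the suite exacte \eqref{spec1}.

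Second, your appeal to \og Hilbert 90 pour $\gm$\fg{} in the $\mu_n$ case is misleading: the remark following the theorem records $R^1\varepsilon_*\gm=\mathbb{Q}/\mathbb{Z}\otimes\overline{M}_X^{\rm gp}\neq 0$ (see also l'exemple \ref{logpic}), so no Hilbert-90-type vanishing is available in the kpl-versus-fppf comparison, and $R^1\varepsilon_*\mu_n\simeq\overline{M}_X^{\rm gp}\otimes\mathbb{Z}/n\mathbb{Z}$ cannot be extracted from the two exact sequences by homological algebra alone — one must control the boundary into $R^1\varepsilon_*\gmlog$, which requires the structure result that, fppf-locally, every kpl $\mu_n$-torsor lies in the image of the cobord \eqref{cobord1}, i.e. is a standard Kummer torsor (corollaire \ref{vidal} together with le lemme \ref{niziolstyle}). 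The same point resurfaces in your surjectivity step for general $G$: the passage from \og domin\'e par un torseur kumm\'erien standard sous $D(A)$\fg{} to \og une collection de morphismes $\mu_{n_i}\rightarrow G$\fg{} is precisely the descent/cocycle computation where the hypotheses on $G$ (fini localement libre, ou affine lisse) actually intervene, and as written it is an assertion rather than an argument. Since you, like the paper, ultimately lean on \cite{kato2} and \cite{niziol} for these points, the sketch is acceptable as a roadmap, but the strictly-local vanishing claim must be corrected and the Hilbert 90 shortcut replaced by the standard-torsor structure theorem.
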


\begin{proof}
Voir \cite[Theorem 4.1]{kato2} ou \cite[Theorem 3.12]{niziol}.
\end{proof}

Rappelons que le foncteur (de la cat\'egorie $(fs/X)^{op}$ dans celle des groupes ab\'eliens)
$$
\gmlog:T\longmapsto \Gamma(T,M_T^{\rm gp})
$$
est un faisceau pour la topologie Kummer plate (voir \cite[Theorem 3.2]{kato2} ou \cite[Cor. 2.22]{niziol}). On dispose d'un monomorphisme canonique $\gm\rightarrow \gmlog$.

D'autre part, $\overline{M}_X^{\rm gp}$ d\'esigne l'image directe $\varepsilon_*(\gmlog/\gm)$, le quotient \'etant calcul\'e pour la topologie fppf. Les sections de $\overline{M}_X^{\rm gp}$ sur un $X$-sch\'ema $T\rightarrow X$ sont donn\'ees par $\Gamma(T,\overline{M}_T^{\rm gp})$, $T$ \'etant muni de la log structure image r\'eciproque de celle de $X$.

Le principal ingr\'edient de la preuve du th\'eor\`eme \ref{epsilonkato} est la suite exacte de Kummer en topologie log plate
$$
\begin{CD}
0 @>>> \mu_n @>>> \gmlog @>>> \gmlog @>>> 0 \\
\end{CD}
$$
induite par la multiplication par un entier $n\geq 1$ sur $\gmlog$. Celle-ci donne naissance \`a un cobord
\begin{equation}
\label{cobord1}
\begin{CD}
\delta:\gmlog(X)=M_X^{\rm gp}(X) @>>> H^1_{\kpl}(X,\mu_n) \\
\end{CD}
\end{equation}

Ainsi, \'etant donn\'es un morphisme $\mu_n\rightarrow G$ et une section globale de $M_X^{\rm gp}$, on peut leur associer un $G$-torseur via le morphisme naturel $H^1_{\kpl}(X,\mu_n)\rightarrow H^1_{\kpl}(X,G)$. En faisceautisant ce proc\'ed\'e, on obtient un morphisme \`a valeurs dans $R^1\varepsilon_* G$.

\begin{rmq}
Dans le cas o\`u $G=\gm$, on trouve
$$
R^1\varepsilon_* \gm = \mathbb{Q}/\mathbb{Z} \otimes \overline{M}_X^{\rm gp}
$$
Dans le cas o\`u $G=\mu_n$, il vient
$$
R^1\varepsilon_* \mu_n = \mathbb{Z}/n\mathbb{Z} \otimes \overline{M}_X^{\rm gp}
$$
\end{rmq}

\begin{exm}
\label{logpic}
Soit $X$ un sch\'ema n\oe{}th\'erien r\'egulier, muni de la log structure associ\'ee \`a un ouvert $U$ dont le compl\'ementaire est un diviseur $D=\sum_{m=1}^r D_m$ \`a croisements normaux \`a multiplicit\'es $1$ sur $X$ (cf. exemple \ref{debase}). Alors $\overline{M}_X^{\rm gp}(X)=\oplus_{m=1}^r \mathbb{Z}.D_m$ et la suite exacte \eqref{spec1} pour le groupe $\gm$ s'\'ecrit
$$
\begin{CD}
0 \longrightarrow \pic(X) @>>> H^1_{\kpl}(X,\gm) @>\nu >> \bigoplus_{m=1}^r (\mathbb{Q}/\mathbb{Z}).D_m \longrightarrow 0\\
\end{CD}
$$
la fl\`eche $\nu$ \'etant surjective en vertu de \cite[cor. 3.1.4]{gil5}. Nous verrons plus loin (cf. paragraphe \ref{torseurscycliques}) comment se traduit la suite exacte \eqref{spec1} pour le groupe $\mu_n$.
\end{exm}

On peut d\'eduire du th\'eor\`eme \ref{epsilonkato} le r\'esultat suivant de finitude.

\begin{cor}
Soient $X$ un log sch\'ema localement n\oe{}th\'erien, et $G$ un $X$-sch\'ema en groupes commutatif, fini localement libre. Alors le groupe $H^1_{\kpl}(X,G)$ est fini si et seulement si le groupe $H^1_{\fppf}(X,G)$ l'est.
\end{cor}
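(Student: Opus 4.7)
\emph{Plan.} The implication $(\Rightarrow)$ is immediate from \eqref{spec1}, which exhibits $H^1_{\fppf}(X,G)$ as a subgroup of $H^1_{\kpl}(X,G)$; if the latter is finite, so is the former.

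For the converse, suppose $H^1_{\fppf}(X,G)$ is finite. From \eqref{spec1} one extracts the short exact sequence
$$
0 \longrightarrow H^1_{\fppf}(X,G) \longrightarrow H^1_{\kpl}(X,G) \longrightarrow K \longrightarrow 0,
$$
where $K := \ker\bigl(H^0(X, R^1\varepsilon_*G) \to H^2_{\fppf}(X,G)\bigr)$. It suffices to show $K$ is finite. By theorem \ref{epsilonkato},
$$
R^1\varepsilon_* G \;\simeq\; \bigl(\varinjlim_n \Hom(\mu_n, G)\bigr) \otimes_{\mathbb{Z}} \overline{M}_X^{\rm gp}.
$$
Since $G$ is killed by $N := |G|$, a Cartier-duality argument identifies $\Hom(\mu_n, G)$ with $\Hom(G[n]^D, \mathbb{Z}/n)$, a finite group scheme of order at most $|G[n]| \leq N$. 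The transition maps $\Hom(\mu_n, G) \hookrightarrow \Hom(\mu_{mn}, G)$ are injective, because $\mu_{mn}\to\mu_n$ is an fppf epimorphism. The colimit therefore stabilizes to a finite group scheme $H$, and one obtains $R^1\varepsilon_*G \simeq H \otimes_{\mathbb{Z}} \overline{M}_X^{\rm gp}$, which, being $N$-torsion, factors as $H \otimes_{\mathbb{Z}/N} \overline{M}_X^{\rm gp}/N\overline{M}_X^{\rm gp}$.

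The main obstacle is to deduce the finiteness of $K \subseteq H^0(X, R^1\varepsilon_*G)$ from this structural description. The approach is to use that $X$ is locally n{\oe}therian with fs log structure: $\overline{M}_X^{\rm gp}$ is a constructible sheaf with finitely generated torsion-free stalks, so its reduction modulo $N$ is constructible with finite stalks, and so is $R^1\varepsilon_*G$ after tensoring with the finite group scheme $H$. The delicate remaining step is to pass from this stalkwise finiteness to finiteness of the group of global sections—or at least of its subgroup $K$—making crucial use of the n{\oe}therianity of $X$ to reduce to a quasi-compact situation, and of the hypothesis of finiteness on $H^1_{\fppf}(X,G)$, which together with the $N$-torsion character of all the groups involved should control the contribution of the inductive limit on $\overline{M}_X^{\rm gp}$.
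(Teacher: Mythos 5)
Your setup is sound and follows the paper's route: the direction $(\Rightarrow)$ is indeed immediate from the injectivity in \eqref{spec1}, and your stabilization of $\varinjlim\Hom(\mu_n,G)$ at a finite group via Cartier duality matches the first observation of the paper's proof. But the proposal stops exactly where the proof has to happen. You name the passage from the structural description of $R^1\varepsilon_*G$ to the finiteness of $K\subseteq H^0(X,R^1\varepsilon_*G)$ as \og la \'etape d\'elicate restante\fg{} and never carry it out: \emph{\og should control the contribution\fg{}} is a hope, not an argument. As written, this is a plan with a hole at its only nontrivial point, so it does not yet constitute a proof.

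Moreover, the way you propose to fill the hole is misdirected. You want to bring in the hypothesis that $H^1_{\fppf}(X,G)$ is finite to bound $K$; but that hypothesis is irrelevant to this step, and it is hard to see how an fppf $H^1$ could control sections of the sheaf $R^1\varepsilon_*G$. The paper's proof makes the finiteness of $H^0(X,R^1\varepsilon_*G)$ \emph{unconditional}: since $X$ is fin et satur\'e, $\overline{M}_X^{\rm gp}$ is a sheaf of finitely generated free abelian groups, and tensoring it with the finite group $\varinjlim\homr(\mu_n,G)$ gives, via the th\'eor\`eme \ref{epsilonkato}, a sheaf whose group of global sections is finite; both implications then drop out of \eqref{spec1} simultaneously, with no interplay between the two finiteness statements. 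Your constructibility idea (finitely many strata, each with finitely many connected components, finite stalks) is the correct way to justify that last finiteness --- note in passing that it genuinely uses quasi-compactness, which \og localement n\oe{}th\'erien\fg{} alone does not supply --- but it must actually be written out, and once it is, the hypothesis on $H^1_{\fppf}(X,G)$ plays no part in it. Also, restricting attention to the kernel $K$ rather than to all of $H^0(X,R^1\varepsilon_*G)$ buys you nothing: the paper proves the stronger statement just as easily.
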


\begin{proof}
Comme $G$ est fini localement libre, la limite inductive des $\homr(\mu_n,G)$ est un groupe fini. D'autre part, $X$ \'etant un log sch\'ema fin et satur\'e, le faisceau $\overline{M}_X^{\rm gp}$ est un faisceau de groupes ab\'eliens libres de type fini. On en d\'eduit, via le th\'eor\`eme \ref{epsilonkato}, que $H^0(X,R^1\varepsilon_* G)$ est un groupe fini. Le r\'esultat d\'ecoule alors de la suite exacte \eqref{spec1}.
\end{proof}

Signalons ici un autre corollaire du th\'eor\`eme \ref{epsilonkato}, qui ach\`eve rapidement le cas des groupes \`a fibres unipotentes.

\begin{cor}
\label{groupeunipotent}
Soient $X$ un log sch\'ema localement n\oe{}th\'erien, et $G$ un $X$-sch\'ema en groupes commutatif fini et plat, \`a fibres unipotentes. Alors l'inclusion
$$
H^1_{\fppf}(X,G)\longrightarrow H^1_{\kpl}(X,G)
$$
est un isomorphisme.
\end{cor}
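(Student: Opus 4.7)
Le plan est d'appliquer la suite exacte \eqref{spec1} : il suffit de montrer que $H^0(X, R^1\varepsilon_* G) = 0$, ce qui entra\^inera imm\'ediatement l'isomorphisme cherch\'e. Gr\^ace au th\'eor\`eme \ref{epsilonkato}, nous sommes ramen\'es \`a prouver que le faisceau $\Hom(\mu_n, G)$ (sur le petit site fppf de $X$) est nul pour tout entier $n\geq 1$.

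Pour cela, fixons un $X$-sch\'ema $T$ et un morphisme de $T$-groupes $\phi : \mu_{n,T}\to G_T$ ; il s'agit de montrer que $\phi$ est trivial. Cette question \'etant locale sur $T$, nous pouvons supposer $T$ strictement local, de corps r\'esiduel $k$ alg\'ebriquement clos. Par rigidit\'e des sch\'emas en groupes de type multiplicatif (SGA 3, expos\'e IX, 3.6), le morphisme $\phi$ est alors d\'etermin\'e par sa restriction \`a la fibre ferm\'ee, i.e. par un \'el\'ement de $\homr_k(\mu_{n,k}, G_k)$. On est donc ramen\'e \`a montrer qu'un sch\'ema en groupes commutatif fini unipotent sur un corps alg\'ebriquement clos ne re\c{c}oit aucun morphisme non trivial de $\mu_n$.

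Or, sur un corps alg\'ebriquement clos, la d\'ecomposition classique des sch\'emas en groupes commutatifs finis en parties \'etale/connexe d'une part, et de type multiplicatif/unipotente d'autre part, montre qu'un groupe unipotent ne contient aucun sous-groupe non trivial de type multiplicatif. L'image du morphisme $\mu_{n,k}\to G_k$ \'etant un tel sous-groupe, elle est n\'ecessairement triviale, ce qui conclut.

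L'unique \'etape qui demande un peu de soin est l'argument de rigidit\'e permettant de ramener la nullit\'e globale de $\phi$ \`a celle de sa restriction \`a la fibre ferm\'ee ; tout le reste d\'ecoule formellement du th\'eor\`eme \ref{epsilonkato} et de la suite exacte \eqref{spec1}.
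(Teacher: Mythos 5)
Votre d\'emonstration est correcte et suit le m\^eme squelette que celle de l'article : la suite exacte \eqref{spec1} et le th\'eor\`eme \ref{epsilonkato} ram\`enent tout \`a la nullit\'e du faisceau $\Hom(\mu_n,G)$ pour tout $n\geq 1$. La diff\'erence r\'eside dans la preuve de cette nullit\'e : l'article l'exp\'edie en une ligne en citant \cite[expos\'e XVII, lemme 2.5]{gro3t2} (tout homomorphisme d'un groupe de type multiplicatif dans un groupe unipotent est nul, \'enonc\'e d\'ej\`a relatif), tandis que vous la red\'emontrez \`a la main : localisation en une base strictement hens\'elienne, rigidit\'e des homomorphismes de source $\mu_n$ (SGA 3, expos\'e IX), puis r\'eduction \`a la fibre ferm\'ee. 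Votre variante est plus autonome et met en lumi\`ere le point r\'eellement non trivial, \`a savoir le passage de l'hypoth\`ese \emph{\`a fibres unipotentes} \`a un \'enonc\'e sur la base --- c'est exactement ce que fait l'argument de rigidit\'e, dont les hypoth\`eses sont bien v\'erifi\'ees ici puisque $G$ est fini et plat, donc affine, s\'epar\'e et de pr\'esentation finie sur une base localement n\oe{}th\'erienne ; la citation de l'article ach\`ete la bri\`evet\'e au prix d'un renvoi \`a un lemme plus g\'en\'eral. Deux pr\'ecisions \`a apporter \`a votre r\'edaction. D'abord, le corps r\'esiduel d'un sch\'ema strictement local est seulement s\'eparablement clos ; c'est sans cons\'equence, car $\homr_k(\mu_{n,k},G_k)$ s'injecte dans $\homr_{\bar k}(\mu_{n,\bar k},G_{\bar k})$, ce qui permet de supposer $k$ alg\'ebriquement clos --- ou, plus directement, l'\'enonc\'e sur un corps vaut sur un corps \emph{quelconque} par d\'evissage : un groupe fini unipotent admet une filtration dont les quotients successifs se plongent dans $\mathbf{G}_a$, et $\homr(\mu_n,\mathbf{G}_a)=0$ sur toute base (l'alg\`ebre de $\mu_n$ n'a pas d'\'el\'ement primitif non nul), ce qui dispense de la d\'ecomposition multiplicatif/unipotent et de toute hypoth\`ese de perfection ; c'est d'ailleurs, en substance, le lemme cit\'e par l'article. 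Ensuite, la r\'eduction au cas strictement local m\'erite une ligne de justification (pr\'esentation finie de $\mu_n$ et $G$, passage \`a la limite sur les voisinages, puis descente fid\`element plate de l'\'egalit\'e $\phi=0$) ; elle est standard mais pas purement formelle.
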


\begin{proof}
En effet, $G$ \'etant \`a fibres unipotentes, le faisceau $\Hom(\mu_n,G)$ est nul pour tout $n\geq 1$ en vertu de \cite[expos\'e XVII, lemme 2.5]{gro3t2}. On en d\'eduit, au vu du th\'eor\`eme \ref{epsilonkato}, que $R^1\varepsilon_* G=0$, d'o\`u le r\'esultat gr\^ace \`a la suite \eqref{spec1}.
\end{proof}

\begin{exm}
Si $X$ est un sch\'ema de caract\'eristique $p>0$, alors l'introduction des log structures ne cr\'ee pas de nouveaux torseurs sous les sch\'emas en groupes $\alpha_p$ et $\mathbb{Z}/p\mathbb{Z}$.
\end{exm}

%%%%%%%%%%%%%%%%%%%%%%%%%%%%%%%%%%%%%%%%%%%%%

\section{Torseurs sous un sch\'ema en groupes fini}
\label{trois}

%%%%%%%%%%%%%%%%%%%%%%%%%%%%%%%%%%%%%%%%%%%%%

Soient $X$ un log sch\'ema, et $G$ un $X$-sch\'ema en groupes commutatif, fini localement libre. En munissant $G$ de la log structure image r\'eciproque de celle de $X$, nous obtenons un log sch\'ema en groupes, que nous noterons encore $G$ comme nous l'avons signal\'e pr\'ec\'edemment. On v\'erifie ais\'ement (par adjonction) que le faisceau ab\'elien
$$
\Hom_X(G,\gm)
$$
sur le site $(fs/X)_{\kpl}$ est repr\'esentable par le dual de Cartier de $G$, que nous noterons $G^D$. Il est clair que nous avons un isomorphisme de bidualit\'e $G\simeq (G^D)^D$. Remarquons au passage que Kato a d\'efini dans \cite{kato3} des log sch\'emas en groupes qui sont, localement pour la topologie Kummer plate, repr\'esentables par des sch\'emas en groupes finis localement libres classiques. La dualit\'e de Cartier s'\'etend \`a ce cadre.

Le but de cette section est d'\'etudier les torseurs pour la topologie log plate sous des sch\'emas en groupes commutatifs finis classiques. On peut cependant remarquer que l'introduction des log structures a pour effet de rajouter de nouveaux faisceaux de groupes ab\'eliens. Il serait donc raisonnable de s'int\'eresser aux torseurs sous un log sch\'ema en groupes g\'en\'eral (\emph{i.e.} qui ne provient pas d'un sch\'ema en groupes classique).

%%%%%%%%%%%%%%%%%%%%%%%%%%%%%%%%%%%%%%%%%%%%%

\subsection{Questions de repr\'esentabilit\'e}

Le th\'eor\`eme de repr\'esentabilit\'e qui suit est d\^u \`a Kato (voir \cite[Theorem 9.1]{kato2}).

\begin{thm}
\label{representable}
Soit $X$ un log sch\'ema localement n\oe{}th\'erien. Soit $G$ un $X$-sch\'ema en groupes commutatif, fini localement libre, et soit $F$ un $G$-torseur pour la topologie log plate. Alors $F$ est repr\'esentable par un log sch\'ema qui est log plat et kumm\'erien sur $X$, et dont le sch\'ema sous-jacent est fini sur celui de $X$.
\end{thm}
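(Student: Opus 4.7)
La strat\'egie consiste \`a combiner la descente effective avec la description explicite des recouvrements Kummer log plats fournie par le Corollaire \ref{vidal}. On commence par se ramener au cas o\`u $X=\Spec A$ est affine n\oe{}th\'erien et admet une carte globale $P\to M_X$ avec $P$ fs. Par d\'efinition d'un $G$-torseur pour la topologie Kummer log plate, il existe un recouvrement de $X$ qui trivialise $F$. En appliquant le Corollaire \ref{vidal} et, au besoin, le Lemme \ref{niziolstyle}, on peut raffiner ce recouvrement de sorte que le morphisme trivialisant $X'\to X$ soit soit (a) un morphisme fppf classique \`a log structure stricte, soit (b) un torseur kumm\'erien standard $X'=X\times_{\Spec \mathbb{Z}[P]} \Spec \mathbb{Z}[Q]$ associ\'e \`a un morphisme kumm\'erien $u:P\to Q$. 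Il suffit alors d'\'etablir la repr\'esentabilit\'e dans ces deux cas.

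Dans le cas (a), l'isomorphisme $F\times_X X'\simeq G\times_X X'$ fournit, par oubli de la log structure, un $X'$-sch\'ema fini localement libre, et la descente fppf classique produit un $X$-sch\'ema fini localement libre $Y$ repr\'esentant $\varepsilon_*F$. En munissant $Y$ de la log structure image r\'eciproque de celle de $X$, on obtient un log sch\'ema fs repr\'esentant $F$, strict (donc kumm\'erien) et log plat sur $X$. Dans le cas (b), $X'\to X$ est un torseur sous $H:=D(Q^{\rm gp}/u(P^{\rm gp}))$, et la donn\'ee de descente sur $F\times_X X'\simeq G\times_X X'$ se traduit par une action de $H$ sur $G\times_X X'$ relevant l'action de $H$ sur $X'$ et commutant \`a celle de $G$. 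Cette action \'etant libre, le quotient existe comme sch\'ema fini sur le sch\'ema sous-jacent \`a $X$, et la log structure h\'erit\'ee de $X'$ puis descendue via $H$ est localement associ\'ee \`a un mono\"ide fs $R$ tel que le morphisme naturel $P\to R$ soit kumm\'erien, ce qui rend le morphisme r\'esultant $Y\to X$ log plat et kumm\'erien.

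L'\'etape la plus d\'elicate sera le cas (b) : v\'erifier que les donn\'ees de descente sont effectives dans la cat\'egorie des log sch\'emas fs, et que la log structure obtenue est de la forme voulue. Le c\oe{}ur de l'argument est de nature mono\"idale : la donn\'ee de descente, interpr\'et\'ee comme une structure $H$-\'equivariante sur la carte $Q$, doit produire un mod\`ele de carte bien d\'efini pour $Y$, et la saturation des mono\"ides garantit que la construction de quotient reste dans la cat\'egorie fs et pr\'eserve la propri\'et\'e kumm\'erienne. Alternativement, on peut suivre l'approche g\'en\'erale de Kato dans \cite[Theorem 9.1]{kato2}, qui traite ces questions via un cadre de descente effective pour les faisceaux repr\'esentables sur $(fs/X)_{\kpl}$, ramenant finalement tout le contenu du th\'eor\`eme \`a l'analyse explicite des torseurs kumm\'eriens standards.
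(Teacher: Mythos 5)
The paper gives no proof of this theorem: it is attributed outright to Kato (\cite[Theorem 9.1]{kato2}), so your closing fallback (\emph{suivre l'approche g\'en\'erale de Kato}) coincides exactly with what the paper does. Judged as a self-contained argument, however, your sketch of case (b) --- the only genuinely logarithmic case --- contains a real gap. The claim \emph{cette action \'etant libre} is false over the locus where the log structure is non-trivial: take $X=\Spec k[t]$ with the log structure of the divisor $t=0$, $u:\mathbb{N}\to\mathbb{N}$ the multiplication by $n$, and $X'=\Spec k[t^{1/n}]$; the action of $H=\mu_n$ on the underlying scheme of $X'$ has a fixed point at $t^{1/n}=0$. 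The point is that $X'\to X$ is an $H$-torsor only in the Kummer log flat topos, not an fppf torsor of schemes, so you cannot invoke free-action quotient theory. The quotient of the underlying affine scheme by the diagonalizable $H$-action does exist --- take the weight-zero part of the graded algebra --- but not for the reason you give, and the resulting quotient map is not faithfully flat.

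More fundamentally, descent along a standard Kummer torsor is not classical fppf descent: the underlying scheme morphism of $X'\to X$ is finite but in general not flat (Lemma \ref{quisuit} requires $P=\mathbb{N}^r$ and $Q$ torsion-free), and even when flat it is ramified along the boundary. So producing a candidate $Y$ (scheme quotient equipped with a log structure whose chart is a submonoid of $Q$ twisted by the descent cocycle) is only half the work: you must still prove that $Y$, viewed in $(fs/X)$, represents the sheaf $F$ on the Kummer log flat site, i.e.\ that the descent datum is \emph{effective}. That effectivity is precisely the content of Kato's theorem, so quoting \cite[Theorem 9.1]{kato2} at this point makes the argument circular rather than complete. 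A further, fixable, omission: by Corollaire \ref{vidal} a trivializing cover refines in general to a \emph{composite} of a classical fppf cover and a standard Kummer torsor, so your cases (a) and (b) must be handled as a tower (descend first along the Kummer part, then apply classical fppf descent, using Lemme \ref{niziolstyle} to arrange the charts), not as two disjoint alternatives.
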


Sous les hypoth\`eses du th\'eor\`eme \ref{representable}, le morphisme de sch\'emas sous-jacent \`a un torseur (pour la topologie log plate) est fini. Dans les cas qui nous int\'eressent, on peut m\^eme montrer qu'il est fini localement libre.

\begin{prop}
\label{loclibre}
Soit $X$ un log sch\'ema localement n\oe{}th\'erien. On suppose que $X$ admet, localement pour la topologie \'etale sur $X$, une carte model\'ee sur un mono\"ide de la forme $\mathbb{N}^r$. Soit $G$ un $X$-sch\'ema en groupes commutatif, fini localement libre, et soit $F$ un $G$-torseur (repr\'esentable) pour la topologie log plate. Alors le sch\'ema sous-jacent \`a $F$ est fini localement libre sur celui de $X$.
\end{prop}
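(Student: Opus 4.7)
Par le théorème \ref{representable}, $F$ est représentable par un log schéma, log plat et kummérien sur $X$, dont le schéma sous-jacent est fini sur celui de $X$. Comme $X$ est localement n\oe{}thérien, il suffit d'établir la platitude du morphisme sous-jacent $F^{\circ}\to X^{\circ}$. Or la platitude se vérifie après un recouvrement fppf convenable de $X$, et descend également à travers un recouvrement fppf (fidèlement plat) de la source, ce qui nous autorise à raisonner localement dans les deux sens.

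Localement pour la topologie étale, $X$ admet par hypothèse une carte $\alpha:\mathbb{N}^r\to M_X$. Le monoïde $\mathbb{N}^r$ étant sans torsion, et $F\to X$ étant log plat et kummérien, le lemme \ref{niziolstyle} (parties (1), (2) et (3)) fournit, quitte à passer à des recouvrements fppf convenables de $X$ et $F$, une carte $u:\mathbb{N}^r\to Q$ de $F\to X$ prolongeant $\alpha$, avec $Q$ fs sans torsion et $u$ kummérien, telle que le morphisme strict
$$
F\longrightarrow X':=X\times_{\Spec(\mathbb{Z}[\mathbb{N}^r])}\Spec(\mathbb{Z}[Q])
$$
soit plat au sens classique (et localement de présentation finie).

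L'étape cruciale consiste à montrer que la projection $\Spec(\mathbb{Z}[Q])\to\Spec(\mathbb{Z}[\mathbb{N}^r])$ est finie localement libre, d'où il suivra par changement de base qu'il en est de même de $X'\to X$. Posons $d:=|Q^{\rm gp}/u^{\rm gp}(\mathbb{Z}^r)|$, fini par hypothèse kummérienne. Comme $Q^{\rm gp}$ est sans torsion, on peut l'identifier à un sous-groupe de $\mathbb{Q}^r$ contenant $\mathbb{Z}^r$ comme sous-groupe d'indice $d$. La saturation de $Q$ conjuguée au caractère kummérien de $u$ entraîne alors l'égalité $Q=Q^{\rm gp}\cap\mathbb{Q}_{\geq 0}^r$ (variante de Gordan). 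En choisissant dans chaque classe de $Q^{\rm gp}/\mathbb{Z}^r$ un représentant $q_j$ dont toutes les coordonnées appartiennent à $[0,1)$, on vérifie que tout élément de $Q$ s'écrit de manière unique sous la forme $q_j+p$ avec $p\in\mathbb{N}^r$ ; la famille $(q_1,\ldots,q_d)$ constitue donc une base de $\mathbb{Z}[Q]$ comme $\mathbb{Z}[\mathbb{N}^r]$-module libre.

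Par changement de base, $X'\to X$ est ainsi fini localement libre. Composé avec le morphisme $F\to X'$ classiquement plat, on en déduit que $F\to X$ devient plat après les recouvrements fppf effectués ; par descente fidèlement plate (sur la source et sur la cible), $F\to X$ est plat au sens classique. Joint à la finitude fournie par le théorème \ref{representable} et au caractère localement n\oe{}thérien de $X$, ceci entraîne que $F$ est fini localement libre sur $X$. L'obstacle principal est la vérification combinatoire (type Gordan) donnant la base libre explicite de $\mathbb{Z}[Q]$ sur $\mathbb{Z}[\mathbb{N}^r]$ ; une fois celle-ci acquise, le reste se réduit à du formalisme de descente.
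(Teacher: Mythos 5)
Your proof is correct, and its overall skeleton coincides with the paper's: reduction via Theorem \ref{representable} to proving flatness of the underlying finite morphism, fppf-localization on source and target, Lemma \ref{niziolstyle} to produce a Kummer chart $u:\mathbb{N}^r\to Q$ with $Q$ fs and torsion-free, and the factorization $F\to X'\to X$ with $F\to X'$ strict and classically flat. Where you genuinely diverge is on the key point that $\Spec(\mathbb{Z}[Q])\to\Spec(\mathbb{Z}[\mathbb{N}^r])$ is finite locally free, which the paper isolates as Lemma \ref{quisuit}. The paper invokes Kato's criterion \cite[Proposition 4.1]{kato1} --- for an injective morphism of monoids, flatness of the induced map of monoid algebras is equivalent to integrality --- and establishes integrality of $u$ by constructing $v:Q\to\mathbb{N}^r$ with $v\circ u=[n]$, then applying the general fact that if $v\circ u$ is integral and $v$ is exact, then $u$ is integral. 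You instead exhibit an explicit monomial basis: your identifications $Q^{\rm gp}\subseteq\mathbb{Q}^r$ and $Q=Q^{\rm gp}\cap\mathbb{Q}_{\geq 0}^r$ are correctly justified (Kummer gives $Q\subseteq\mathbb{Q}_{\geq 0}^r$; conversely $d\cdot Q^{\rm gp}\subseteq\mathbb{Z}^r$ gives $da\in\mathbb{N}^r\subseteq Q$ for $a\in Q^{\rm gp}\cap\mathbb{Q}_{\geq 0}^r$, and saturation yields $a\in Q$), and the decomposition $Q=\bigsqcup_j\,(q_j+\mathbb{N}^r)$ via the unique coset representatives with coordinates in $[0,1)$ is sound (each coordinate of $a-q_j$ is an integer strictly greater than $-1$, hence nonnegative). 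Your route is more elementary --- no appeal to the integrality criterion --- and yields strictly more: global freeness of $\mathbb{Z}[Q]$ over $\mathbb{Z}[\mathbb{N}^r]$, with explicit rank $d=\vert Q^{\rm gp}/u^{\rm gp}(\mathbb{Z}^r)\vert$, which is exactly the order of the structure group $D(Q^{\rm gp}/u(\mathbb{Z}^r))$ of the standard Kummer torsor, a pleasant consistency check. The paper's argument, while less explicit, is more robust in that the integrality criterion and the factorization lemma belong to the general log-geometric toolkit; note, though, that both proofs ultimately exploit the hypothesis $P=\mathbb{N}^r$ --- yours through the coordinatewise floor trick, the paper's through the integrality of $[n]$ on $\mathbb{N}^r$ and the construction of the retraction $v_0$ --- so neither extends as stated to a general fs chart.
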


\begin{proof}
Soit $f:Y\rightarrow X$ un $G$-torseur pour la topologie log plate. D'apr\`es le th\'eor\`eme \ref{representable}, le morphisme $f^{\circ}$ sous-jacent \`a $f$ est fini surjectif. La base \'etant localement n\oe{}th\'erienne, il suffit de montrer que $f^{\circ}$ est plat. Ceci est une question locale pour la topologie fppf sur $X$ et $Y$. D'apr\`es le lemme \ref{niziolstyle}, on peut donc supposer que $X$ admet une carte model\'ee sur $\mathbb{N}^r$, et que $f$ admet une carte $\mathbb{N}^r\rightarrow Q$ qui est un morphisme kumm\'erien de mono\"ides fs sans torsion. Le morphisme $f$ se d\'ecompose ainsi de la fa\c{c}on suivante
$$
Y\longrightarrow X\times_{\Spec(\mathbb{Z}[\mathbb{N}^r])} \Spec(\mathbb{Z}[Q]) \longrightarrow X
$$
o\`u le premier morphisme est strict et plat, et le second est fini localement libre --- donc plat --- sur les sch\'emas sous-jacents en vertu du lemme \ref{quisuit} ci-dessous. Par cons\'equent, $f^{\circ}$ est plat, ce qu'on voulait.
\end{proof}

\begin{lem}
\label{quisuit}
Soit $X$ un log sch\'ema muni d'une carte $\mathbb{N}^r\rightarrow M_X$. Alors, pour tout morphisme kumm\'erien $u:\mathbb{N}^r\rightarrow Q$ de mono\"ides fs sans torsion, le sch\'ema sous-jacent au torseur kumm\'erien standard associ\'e \`a $u$ est fini localement libre sur $X$.
\end{lem}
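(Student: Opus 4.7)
Finite locally free being preserved by base change, and $Y\to X$ being the base change of $\Spec(\mathbb{Z}[Q])\to \Spec(\mathbb{Z}[\mathbb{N}^r])$ along the chart $X\to \Spec(\mathbb{Z}[\mathbb{N}^r])$, it suffices to prove that $\mathbb{Z}[Q]$ is a free $\mathbb{Z}[\mathbb{N}^r]$-module of finite rank. My plan is to exhibit an explicit basis indexed by the finite abelian group $G := Q^{\rm gp}/u^{\rm gp}(\mathbb{Z}^r)$.

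Since $u$ is kumm\'erien, $u^{\rm gp}$ is injective with finite cokernel $G$, so $Q^{\rm gp}$ is a torsion-free abelian group of rank $r$; using the canonical basis $e_1,\dots,e_r$ of $\mathbb{N}^r$ I embed $Q^{\rm gp}$ in $\mathbb{R}^r$. A preliminary step is to establish that $Q = Q^{\rm gp}\cap \mathbb{R}_{\geq 0}^r$. The inclusion ``$\subseteq$'' follows because every $q\in Q$ has a positive power in $\mathbb{N}^r\subseteq \mathbb{R}_{\geq 0}^r$; for ``$\supseteq$'', any $m\in Q^{\rm gp}\cap \mathbb{R}_{\geq 0}^r$ admits a positive multiple $nm\in \mathbb{Z}^r\cap \mathbb{R}_{\geq 0}^r = \mathbb{N}^r\subseteq Q$, whence $m\in Q$ by saturation of $Q$.

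I then run a fundamental-domain argument. For each $g\in G$ I choose any lift $\tilde g\in Q^{\rm gp}\subseteq \mathbb{R}^r$ and set
\[
q_g := \tilde g - \sum_{i=1}^r \lfloor \tilde g_i\rfloor\, e_i.
\]
This element is independent of the choice of $\tilde g$, and its coordinates lie in $[0,1)$, so $q_g\in Q^{\rm gp}\cap \mathbb{R}_{\geq 0}^r = Q$. I next verify that $Q = \bigsqcup_{g\in G}(q_g + \mathbb{N}^r)$: given $q\in Q$ of class $g\in G$, the difference $q - q_g$ lies in $\mathbb{Z}^r$ and each of its coordinates is strictly greater than $-1$ (those of $q$ being $\geq 0$, those of $q_g$ being $<1$), hence non-negative, so $q-q_g\in \mathbb{N}^r$. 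Passing to monoid algebras yields the decomposition $\mathbb{Z}[Q] = \bigoplus_{g\in G} q_g\cdot \mathbb{Z}[\mathbb{N}^r]$, free of rank $|G|$, as required.

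The main point is the fundamental-domain decomposition of the last paragraph: it really uses that the chart is model\'ee sur $\mathbb{N}^r$, so that $\mathbb{R}_{\geq 0}^r$ is the simplicial cone generated by a $\mathbb{Z}$-basis of its ambient lattice $\mathbb{Z}^r$. For an arbitrary fs chart $P$ in place of $\mathbb{N}^r$ the analogous decomposition is no longer available, which explains why the surrounding proposition \ref{loclibre} restricts to charts of the form $\mathbb{N}^r$.
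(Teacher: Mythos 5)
Your proof is correct, and it takes a genuinely different route from the paper's. Both arguments start with the same reduction: since the chart morphism $X\rightarrow \Spec(\mathbb{Z}[\mathbb{N}^r])$ is strict, the underlying scheme morphism of the standard Kummer torsor is the base change, in the category of schemes, of $\Spec(\mathbb{Z}[Q])\rightarrow \Spec(\mathbb{Z}[\mathbb{N}^r])$ (you assert this implicitly; the paper spells it out), so it suffices to treat the latter map. From there the paper, already knowing finiteness, proves only flatness, via Kato's criterion \cite[Proposition 4.1]{kato1}: it suffices that $u$ be injectif et \emph{int\`egre}, which is checked by constructing $v_0:Q^{\rm gp}\rightarrow \mathbb{Z}^r$ with $v_0\circ u^{\rm gp}=[n]$ (possible because $Q^{\rm gp}$ is free, i.e. torsion-freeness enters here), restricting it to an exact $v:Q\rightarrow\mathbb{N}^r$, and proving the general cancellation fact that $u$ is \emph{int\`egre} whenever $v\circ u$ is \emph{int\`egre} and $v$ is exact; local freeness then follows from finite + flat over the noetherian base $\Spec(\mathbb{Z}[\mathbb{N}^r])$. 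You instead prove the stronger, explicit statement that $\mathbb{Z}[Q]$ is a \emph{free} $\mathbb{Z}[\mathbb{N}^r]$-module of rank $|G|$ with $G=Q^{\rm gp}/u^{\rm gp}(\mathbb{Z}^r)$: your identification $Q=Q^{\rm gp}\cap\mathbb{R}^r_{\geq 0}$ is where saturation enters (exactly once, and correctly), and the fundamental-domain representatives $q_g$ with coordinates in $[0,1)$ do give the coset decomposition $Q=\bigsqcup_{g\in G}(q_g+\mathbb{N}^r)$ and hence a monomial basis. What each approach buys: yours is self-contained (no appeal to Kato's flatness criterion), and it yields more information --- genuine freeness, with rank $|G|$ equal to the order of the structure group $D(Q^{\rm gp}/u(P^{\rm gp}))$ of the torsor, a pleasant consistency check; the paper's argument stays inside the standard log-geometric toolkit, and its exactness/int\'egrit\'e cancellation lemma is of independent use elsewhere. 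Note that both proofs ultimately exploit the same two special features, torsion-freeness of $Q$ and the fact that $\mathbb{R}^r_{\geq 0}$ is the cone on a $\mathbb{Z}$-basis of $\mathbb{Z}^r$ (the paper through the splitting $v_0$, you through the floor-function fundamental domain), so your closing remark accurately explains the restriction to charts model\'ees sur $\mathbb{N}^r$ in proposition \ref{loclibre}.
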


Avant de d\'emontrer ce lemme, nous commen\c{c}ons par rappeler la notion suivante. On dit qu'un morphisme $u:P\rightarrow Q$ entre mono\"ides fs est int\`egre si, pour tous $a_1$, $a_2$ dans $P$, $b_1$, $b_2$ dans $Q$ tels que $u(a_1)b_1=u(a_2)b_2$, il existe $a_3$, $a_4$ dans $P$ et $b$ dans $Q$ tels que $b_1=u(a_3)b$, $b_2=u(a_4)b$, et $a_1a_3=a_2a_4$.

\begin{proof}
Avec les notations du paragraphe \ref{pardeuxtrois}, le morphisme $X\rightarrow D(\mathbb{N}^r)$ est strict (car provenant d'une carte). Par cons\'equent, le morphisme de sch\'emas sous-jacent au torseur kumm\'erien standard associ\'e \`a $u$ n'est autre que le morphisme $\Spec(\mathbb{Z}[Q])\rightarrow \Spec(\mathbb{Z}[\mathbb{N}^r])$ induit par $u$, auquel on a inflig\'e le changement de base $X\rightarrow \Spec(\mathbb{Z}[\mathbb{N}^r])$ dans la cat\'egorie des sch\'emas. Ainsi, il suffit de montrer que le morphisme de sch\'emas $\Spec(\mathbb{Z}[Q])\rightarrow \Spec(\mathbb{Z}[\mathbb{N}^r])$ est fini localement libre. On sait d\'ej\`a que ce morphisme est fini ; la base $\Spec(\mathbb{Z}[\mathbb{N}^r])$ \'etant n\oe{}th\'erienne, reste \`a montrer qu'il est plat. D'apr\`es \cite[Proposition 4.1, (v)~$\Leftrightarrow$~(ii)]{kato1}, il suffit de montrer que le morphisme $u$ est injectif et int\`egre. Comme $u$ est kumm\'erien, il est injectif.

D'autre part, $Q$ \'etant sans torsion, $u^{\rm gp}:\mathbb{Z}^r\rightarrow Q^{\rm gp}$ est un morphisme injectif, \`a conoyau fini, entre deux $\mathbb{Z}$-modules libres. On peut alors trouver un entier $n\geq 1$ et un morphisme $v_0:Q^{\rm gp}\rightarrow \mathbb{Z}^r$ tel que $v_0\circ u^{\rm gp}$ soit la multiplication par $n$ dans $\mathbb{Z}^r$. En restreignant $v_0$ \`a $Q$, on obtient un morphisme $v:Q\rightarrow \mathbb{N}^r$ tel que $v\circ u$ soit la multiplication par $n$ dans le mono\"ide $\mathbb{N}^r$. On v\'erifie ais\'ement que $v$ est exact, et m\^eme kumm\'erien. Comme il est clair que $[n]:\mathbb{N}^r\rightarrow \mathbb{N}^r$ est un morphisme int\`egre, il suffit de montrer le fait g\'en\'eral suivant : si $u:P\rightarrow Q$ et $v:Q\rightarrow R$ sont deux morphismes de mono\"ides tels que $v\circ u$ est int\`egre et $v$ est exact, alors $u$ est int\`egre. Soient en effet $a_1,a_2\in P$ et $b_1,b_2\in Q$ tels que $u(a_1)b_1=u(a_2)b_2$. Alors $v(u(a_1))v(b_1)=v(u(a_2))v(b_2)$ et, puisque $v\circ u$ est int\`egre, il existe $a_3,a_4\in P$ et $r\in R$ tels que $v(b_1)=v(u(a_3))r$, $v(b_2)=v(u(a_4))r$, et $a_1a_3=a_2a_4$. Alors $v^{\rm gp}(u(a_3)^{-1}b_1)=r$ et, $v$ \'etant exact, on en d\'eduit que $u(a_3)^{-1}b_1$ appartient \`a $Q$. De m\^eme, $u(a_4)^{-1}b_2$ appartient \`a $Q$, et il vient :
$$
u(a_3)^{-1}b_1=u(a_4)^{-1}u(a_2)^{-1}u(a_1)b_1=u(a_4)^{-1}b_2
$$
On note $b$ cet \'el\'ement de $Q$, et l'on v\'erifie que ce $b$ satisfait les conditions voulues. Donc $u$ est int\`egre, ce qu'on voulait.
\end{proof}

\begin{rmq}
Le cas de la proposition \ref{loclibre} qui nous int\'eresse tout particuli\`erement est celui d'un sch\'ema r\'egulier, muni de la log structure associ\'ee \`a l'ouvert compl\'ementaire d'un diviseur \`a croisements normaux.
\end{rmq}

Comme autre application du th\'eor\`eme \ref{representable}, nous avons le r\'esultat suivant de comparaison de la cohomologie Kummer \'etale et de la cohomologie Kummer plate dans le cas o\`u $G$ est \'etale.

\begin{cor}
Soit $X$ un log sch\'ema localement n\oe{}th\'erien. Soit $G$ un $X$-sch\'ema en groupes commutatif, fini \'etale. Alors le morphisme canonique
$$
H^1_{\ket}(X,G)\longrightarrow H^1_{\kpl}(X,G)
$$
est un isomorphisme.
\end{cor}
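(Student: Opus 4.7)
The plan is to show bijectivity directly via the representability theorem (Theorem \ref{representable}), by identifying both $H^1_{\ket}(X,G)$ and $H^1_{\kpl}(X,G)$ with the set of isomorphism classes of representable $G$-torsors with the same underlying data, and then using that log étaleness of the torsor is automatic when $G$ is étale.

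For injectivity: if two $G$-torsors $F_1, F_2$ for the Kummer log étale topology become isomorphic as Kummer log flat torsors, they are isomorphic as sheaves on the larger site. By Theorem \ref{representable} (applied to either topology) both are representable by log $X$-schemes with compatible $G$-action, and an isomorphism of sheaves necessarily comes from an isomorphism of these representing log schemes, which is automatically an isomorphism of Kummer log étale torsors. Hence the natural map is injective.

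For surjectivity, let $F$ be a $G$-torsor for the Kummer log flat topology. By Theorem \ref{representable}, $F$ is representable by a log scheme log flat and Kummerian over $X$. I claim $F \to X$ is in fact log étale, which will upgrade $F$ to a $G$-torsor for the Kummer log étale topology. Since $G$ is a classical $X$-scheme equipped with the pullback log structure, $G \to X$ is strict; combined with the hypothesis that $G$ is classically finite étale, $G \to X$ is strict étale, hence in particular log étale and Kummerian (trivially so on monoids). The torsor condition yields an isomorphism
\[
F \times_X F \;\simeq\; G \times_X F
\]
over $F$ via the second projection. Base-changing the strict étale morphism $G \to X$ along $F \to X$ shows that the second projection $F \times_X F \to F$ is strict étale.

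It remains to descend log étaleness along the Kummer log flat cover $F \to X$. Using Lemma \ref{niziolstyle} to locally produce a chart $u:P\to Q$ of $F\to X$ satisfying the conditions of Definition \ref{logdefinitions}(i)-(ii) for log flatness, the problem reduces to descending classical étaleness of the strict part of $F \to X$ along fppf maps, which is standard, together with the fact that after the base change $F\times_X F\to F$ the cokernel of $u^{\rm gp}$ acquires order invertible on $F$ (indeed the base change becomes strict étale, so the torsion in $\coker(u^{\rm gp})$ trivializes appropriately). This is the main technical point; it is essentially the log flat descent of log étaleness, which I would extract from \cite{kato2} or \cite{niziol}. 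Once $F\to X$ is known to be log étale and Kummerian, $F$ represents a $G$-torsor in $(fs/X)_{\ket}$, producing the required preimage in $H^1_{\ket}(X,G)$ and completing the proof.
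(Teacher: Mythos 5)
Your proposal is correct, and on the substantive half (surjectivity) it follows exactly the paper's route: represent the Kummer log flat torsor $F$ via Theorem \ref{representable}, observe that base change along the torsor itself trivializes it, so that $F\times_X F\simeq G\times_X F$ is strict \'etale over $F$ because $G$ is finite \'etale and strict, and conclude by log flat descent of log \'etaleness --- which is precisely \cite[Theorem 7.1]{kato2}, the reference the paper itself invokes with no further detail. Two remarks. First, your intermediate chart discussion (splitting log \'etaleness into \'etaleness of the strict part plus invertibility of the torsion of $\coker(u^{\rm gp})$, with the claim that this torsion ``trivializes appropriately'' after base change) is the weakest sentence of the write-up and does not by itself constitute a proof; but since you ultimately defer to the descent theorem of Kato/Nizio\l{}, nothing is lost --- the paper treats this step as the same black box, so you would do better to drop the chart sketch and cite the theorem directly. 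Second, the paper disposes of injectivity in one line via the Leray spectral sequence for the morphism of sites $(fs/X)_{\kpl}\rightarrow(fs/X)_{\ket}$ (using that $G$, being representable, is a sheaf for both topologies), whereas you give a hands-on Yoneda-style argument; yours works, but note that Theorem \ref{representable} is stated only for torsors of the log flat topology, so you should apply it to the kpl-sheafification of your k\'et torsors and then identify each k\'et torsor with this representable sheaf (both topologies being subcanonical), rather than invoking the theorem ``for either topology'' as written. With these adjustments your argument is a faithful, slightly more explicit version of the paper's proof.
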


\begin{proof}
Par la suite spectrale comparant les topologies log plate et log \'etale, on sait que ce morphisme est injectif, montrons sa surjectivit\'e. Soit $F$ un $G$-torseur pour la topologie Kummer log plate, alors $F$ est repr\'esentable par un log sch\'ema $X'\rightarrow X$ qui est log plat, kumm\'erien et surjectif d'apr\`es le th\'eor\`eme \ref{representable}. Par descente log plate (voir \cite[Theorem 7.1]{kato2}), on en d\'eduit que $X'\rightarrow X$ est log \'etale.
\end{proof}

%%%%%%%%%%%%%%%%%%%%%%%%%%%%%%%%%%%%%%%%%%%%%

\subsection{Restriction et prolongement}

Le but de cette section est de montrer, sous des hypoth\`eses raisonnables, l'unicit\'e du prolongement d'un torseur (pour la topologie log plate) sous un sch\'ema en groupes fini localement libre.

\begin{prop}
\label{injectrestr}
Soit $X$ un log sch\'ema localement n\oe{}th\'erien int\`egre normal, et soit $G$ un $X$-sch\'ema en groupes commutatif, fini localement libre. Soit $U$ un ouvert dense du sch\'ema $X$, que l'on munit de la log structure induite par celle de $X$. Alors le morphisme de restriction
$$
H^1_{\kpl}(X,G)\longrightarrow H^1_{\kpl}(U,G)
$$
est injectif.
\end{prop}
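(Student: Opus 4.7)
The plan is to reformulate the injectivity as an extension problem: a class in $\ker(H^1_{\kpl}(X,G)\to H^1_{\kpl}(U,G))$ is represented by a $G$-torsor $F$ on $X$ whose restriction to $U$ is trivial, and I want to show that $F$ itself is trivial. By Theorem \ref{representable}, $F$ is representable by a log scheme $Y\to X$ whose underlying scheme $Y^\circ$ is finite over $X$; the trivialization of $F|_U$ then yields a section $\sigma_U\colon U\to Y|_U$ in the category of log schemes, and in particular a closed immersion $\sigma_U^\circ\colon U\to Y^\circ|_U$ of the underlying schemes.

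The first step is to extend $\sigma_U^\circ$ to a section $\sigma^\circ\colon X\to Y^\circ$ of the underlying scheme map $f^\circ\colon Y^\circ\to X$. For this I would take the scheme-theoretic closure $Z\subset Y^\circ$ of $\sigma_U^\circ(U)$: the subscheme $Z$ is integral, and $Z\to X$ is finite (as a closed subscheme of the finite $Y^\circ\to X$) and birational (being an isomorphism above the dense open $U$). Since $X$ is integral and normal, any finite birational morphism with target $X$ is an isomorphism (Zariski's main theorem), so $Z\xrightarrow{\sim} X$ produces the desired $\sigma^\circ$.

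The second step, which is the main obstacle, is to promote $\sigma^\circ$ to a morphism of log schemes $\sigma\colon X\to Y$. Equivalently, one has to verify that the natural map of log structures on $X$ induced by $f$, namely $M_X\to \sigma^{\circ,*}M_Y$, is an isomorphism. It is already an isomorphism over $U$ (since $\sigma_U$ was a log morphism there), and it remains to propagate this identification to the boundary $X\setminus U$. I would argue \'etale-locally: by Lemma \ref{niziolstyle} the morphism $f$ admits locally a chart given by a Kummerian morphism $P\to Q$ of fs monoids extending a chart of $M_X$. A local section of $\sigma^{\circ,*}M_Y$ then corresponds to a regular function $a$ on $X$ such that $a^n$ lies in $M_X$ for some $n$, and such that $a|_U$ lies in $M_X|_U$; using that $X$ is integral normal and $M_X$ is fine and saturated, one deduces that $a$ already belongs to $M_X$, which yields the required equality. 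The archetypal case of Example \ref{debase}, where $M_X=\mathcal{O}_X\cap j_*\mathcal{O}_U^*$, makes this transparent because the conditions on $a$ amount precisely to $a\in\mathcal{O}_X$ with $a|_U\in\mathcal{O}_U^*$; the general integral normal case is handled by a careful local analysis combining the Kummerianness of $f$ with the saturation of $M_X$.
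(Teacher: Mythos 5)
Your first step is sound, and it is exactly the paper's: there, the section of $f^{\circ}$ over $U$ is extended to $X$ by invoking \cite[6.1.14]{ega2} for the integral morphism $f^{\circ}$ over the normal integral base, which is precisely the closure/finite-birational/normality argument you write out. The genuine gap is your second step. Your identification of a local section of $\sigma^{\circ,*}M_Y$ with a regular function $a$ on $X$ such that $a^n\in M_X$ tacitly assumes that the structure maps $M_X\to\mathcal{O}_X$ and $M_Y\to\mathcal{O}_Y$ are injective; this holds for divisorial log structures as in Example \ref{debase}, but the proposition is stated for an arbitrary fs log structure, and $U$ is an arbitrary dense open, not the triviality locus. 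Concretely, take $X=\Spec(\mathcal{O})$ a trait equipped with the fs log structure given by the chart $\mathbb{N}\to\mathcal{O}$, $1\mapsto 0$, and let $Y\to X$ be the standard Kummer torsor (proposition-d\'efinition \ref{kummertorsor}) associated with $[n]:\mathbb{N}\to\mathbb{N}$: it is a nontrivial $\mu_n$-torsor, its underlying scheme is $\Spec(\mathcal{O}[t]/(t^n))$, which admits the section $t\mapsto 0$, and along this section the map $\overline{M}_{X,x}\to\overline{M}_{Y,\sigma(x)}$ is the inclusion $\mathbb{N}\subset\tfrac{1}{n}\mathbb{N}$, not an isomorphism. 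All the function-level conditions you propose to check are satisfied in this example ($a=0$, and $0$ does lie in the image of $M_X$), yet no retraction of monoid sheaves $\sigma^{\circ,*}M_Y\to M_X$ exists: passing from sections of the log structure to their images in $\mathcal{O}_X$ destroys exactly the information at stake. (This does not contradict the proposition, since that torsor is nontrivial on $U$ as well; but it shows that Kummerness plus saturation plus conditions on functions cannot force strictness of $\sigma^{\circ}$ --- and even where the structure map is injective, saturation yields $a\in M_X$ only once one knows $a\in M_X^{\rm gp}$, which your conditions do not supply.) Since strictness of the extended section is essentially equivalent to the proposition itself, your ``careful local analysis'' is not a detail to be filled in: it is the whole problem, and the sketched mechanism cannot deliver it.

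The paper sidesteps your second step entirely, by an adjunction argument that is the missing idea. Preliminary observation: if $g:T\to X$ is a representable fppf $G$-torsor, then $T^{\sharp}$, meaning $T$ endowed with the inverse-image log structure from $X$, is the \emph{unique} Kummer log flat torsor with underlying morphism $g$; indeed, for any other such torsor $T'$, the universal property of the inverse-image log structure promotes the identity of underlying schemes to a $G$-equivariant log morphism $T'\to T^{\sharp}$, i.e.\ a morphism of torsors, hence an isomorphism. Triviality of the log torsor $f$ is thereby reduced to a statement about the underlying scheme $f^{\circ}$ with its $G$-action alone, certified via the section extended from $U$ --- note that this is used together with the fact that $f^{\circ}_U$ is already a trivial fppf torsor over the dense open $U$ (the hollow example above shows a bare scheme-theoretic section, by itself, carries no log information). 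At no point does the paper lift $\sigma^{\circ}$ to a log morphism or compute with $M_Y$: it works at the level of torsor morphisms, where every $G$-map is automatically an isomorphism, whereas you attempt to work at the level of monoids, where the needed rigidity is simply not available locally.
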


\begin{proof}
Tout d'abord, observons le fait suivant : si $g:T\rightarrow X$ est un $G$-torseur (repr\'esentable) pour la topologie fppf, alors $g^{\sharp}:T^{\sharp}\rightarrow X$ muni de la log structure image r\'eciproque de celle de $X$ est l'unique $G$-torseur pour la topologie log plate ayant $g$ pour morphisme de sch\'emas sous-jacent. En effet, soit $T'\rightarrow X$ un $G$-torseur log plat ayant $g$ pour morphisme sous-jacent, alors par adjonction l'identit\'e de $T$ induit un morphisme de $X$-log sch\'emas $T'\rightarrow T^{\sharp}$ qui commute \`a l'action du groupe $G$, c'est-\`a-dire qui est un morphisme de torseurs. Par cons\'equent, c'est un isomorphisme.

Soit $f:Y\rightarrow X$ un $G$-torseur pour la topologie Kummer plate (forc\'ement repr\'esentable d'apr\`es le th\'eor\`eme \ref{representable}). 
Pour montrer que ce torseur est trivial, il suffit, d'apr\`es le raisonnement qui pr\'ec\`ede, de montrer que le morphisme $f^{\circ}$ sous-jacent \`a $f$ est un $G$-torseur trivial pour la topologie fppf, ce qui revient \`a exhiber une section de $f^{\circ}$.

Supposons que $f_U:Y_U\rightarrow U$ soit trivial, et montrons que $f$ est trivial. D'apr\`es le th\'eor\`eme \ref{representable}, le morphisme sous-jacent $f^{\circ}$ est fini, donc entier. Par suite, $X$ \'etant int\`egre normal, toute section $U\rightarrow Y_U$ de $f^{\circ}$ au-dessus de $U$ se prolonge en une section de $f^{\circ}$ au-dessus de $X$, en vertu de \cite[6.1.14]{ega2}. D'o\`u le r\'esultat.
\end{proof}

Supposons, avec les hypoth\`eses et notations de la proposition \ref{injectrestr}, que $U$ soit l'ouvert de trivialit\'e de $X$. Alors le morphisme de restriction s'\'ecrit
$$
H^1_{\kpl}(X,G)\longrightarrow H^1_{\fppf}(U,G)
$$
et on se demande \`a pr\'esent (cf. paragraphe \ref{hunhun}) quelle est l'image de ce morphisme.

La philosophie g\'en\'erale adopt\'ee ici s'inspire du r\'esultat suivant (reformulation de \cite[th\'eor\`eme 7.6]{illusie}), lequel sugg\`ere que l'image devrait \^etre constitu\'ee des torseurs qui sont mod\'er\'ement ramifi\'es  au-dessus du compl\'ementaire de $U$, pour autant qu'on puisse donner un sens pr\'ecis \`a cette phrase.

\begin{thm}
\label{GMtame}
Soit $X$ un log sch\'ema log r\'egulier, et soit $G$ un $X$-sch\'ema en groupes commutatif, fini \'etale. Alors l'ouvert de trivialit\'e $U$ est dense dans $X$, et le morphisme de restriction
$$
H^1_{\ket}(X,G) \longrightarrow H^1_{\et}(U,G_U)
$$
est injectif. Son image est l'ensemble des $G_U$-torseurs $V\rightarrow U$ qui sont mod\'er\'ement ramifi\'es le long de $X\backslash U$, c'est-\`a-dire tels que le normalis\'e de $X$ dans $V$ soit mod\'er\'ement ramifi\'e au-dessus des points de $X\backslash U$ de codimension $1$.
\end{thm}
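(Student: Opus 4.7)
Pour la densit\'e de $U$, on utilise le fait, d\^u \`a Kato, qu'un sch\'ema log r\'egulier a un ouvert de trivialit\'e dense, cons\'equence imm\'ediate de la d\'efinition de la log r\'egularit\'e, qui impose en particulier que les ferm\'es d\'efinis par les id\'eaux $I_x$ engendr\'es par l'image de $M_{X,x}\setminus \mathcal{O}_{X,x}^*$ soient de codimension au moins $1$. Pour l'injectivit\'e, on factorise le morphisme de restriction en
$$
H^1_{\ket}(X,G)\simeq H^1_{\kpl}(X,G)\hookrightarrow H^1_{\kpl}(U,G)= H^1_{\fppf}(U,G)=H^1_{\et}(U,G).
$$
Le premier isomorphisme provient du corollaire de comparaison entre cohomologie Kummer log \'etale et Kummer log plate pour $G$ \'etale, \'enonc\'e plus haut ; l'injection centrale est la proposition \ref{injectrestr} appliqu\'ee \`a $X$ (log r\'egulier, donc normal) et \`a l'ouvert dense $U$ ; les deux derni\`eres \'egalit\'es tiennent respectivement au fait que la log structure induite sur $U$ est triviale et au fait que $G$ est \'etale.

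Pour la description de l'image, l'inclusion facile consiste \`a observer qu'un torseur Kummer log \'etale $Y\rightarrow X$ admet localement, d'apr\`es le lemme \ref{niziolstyle}, une carte kumm\'erienne $u:P\rightarrow Q$ dont le conoyau de $u^{\rm gp}$ est fini d'ordre inversible sur $X$ ; l'extension induite en codimension $1$ au-dessus de $X\setminus U$ est alors visiblement mod\'er\'ement ramifi\'ee.

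L'inclusion r\'eciproque est la partie substantielle. Soit $V\rightarrow U$ un $G_U$-torseur mod\'er\'ement ramifi\'e le long de $X\setminus U$. Autour d'un point de codimension $1$, les indices de ramification $n_i$ sont premiers aux caract\'eristiques r\'esiduelles par hypoth\`ese de mod\'eration. On construit alors le torseur kumm\'erien standard $X'\rightarrow X$ obtenu en extrayant les racines $n_i$-i\`emes des g\'en\'erateurs locaux de la log structure le long des composantes de codimension $1$ de $X\setminus U$ : c'est un recouvrement log \'etale kumm\'erien. Le lemme d'Abhyankar logarithmique entra\^ine alors que le changement de base $V\times_U U'\rightarrow U'$ se prolonge en un rev\^etement \'etale classique $Y'\rightarrow X'$, automatiquement $G$-\'equivariant, d\'efinissant donc un $G$-torseur \'etale sur $X'$.

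Reste l'\'etape d\'elicate de descente, qui est le principal obstacle. Par unicit\'e du prolongement \`a travers un ouvert dense (proposition \ref{injectrestr}), la donn\'ee de descente naturelle sur $V\times_U U'$ relativement au recouvrement $U'\rightarrow U$ se rel\`eve de mani\`ere unique en une donn\'ee de descente sur $Y'$ relativement au recouvrement Kummer \'etale $X'\rightarrow X$. L'effectivit\'e de cette derni\`ere (les torseurs repr\'esentables \'etant des faisceaux pour la topologie Kummer plate, \emph{a fortiori} Kummer \'etale) fournit alors le torseur Kummer log \'etale sur $X$ prolongeant $V$. C'est substantiellement l'argument d\'evelopp\'e par Illusie dans \cite[th\'eor\`eme 7.6]{illusie}.
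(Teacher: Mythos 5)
A preliminary remark: the paper does not actually prove this theorem --- it is explicitly presented as a \emph{reformulation} of \cite[th\'eor\`eme 7.6]{illusie}, a result of Fujiwara and Kato --- so your attempt has to be judged on its own as a sketch of that cited result. Your density claim and your injectivity factorisation are correct and use the paper's toolkit appropriately: log regularity gives normality of the underlying scheme, the comparison corollary following th\'eor\`eme \ref{representable} gives $H^1_{\ket}(X,G)\simeq H^1_{\kpl}(X,G)$ for $G$ finite \'etale, proposition \ref{injectrestr} gives the injection, and $H^1_{\kpl}(U,G)=H^1_{\fppf}(U,G)$ holds because $\overline{M}_U^{\rm gp}=0$ kills $R^1\varepsilon_*G$ (th\'eor\`eme \ref{epsilonkato} together with the sequence \eqref{spec1}). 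The easy inclusion for the image is also fine, granted the observation that the underlying scheme of a Kummer \'etale torsor is normal, hence coincides with the normalization of $X$ in $V$.

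The genuine gap is in the converse inclusion, exactly at the point where you invoke ``le lemme d'Abhyankar logarithmique''. After base change to your standard Kummer cover $X'$, the classical Abhyankar lemma applies at the discrete valuation rings of the codimension-$1$ points and shows that the normalization $Y'$ of $X'$ in $V\times_U U'$ is \'etale in codimension $1$; but to conclude that $Y'\rightarrow X'$ is \'etale \emph{everywhere} one needs Zariski--Nagata purity, which requires the underlying scheme of $X'$ to be regular. A log regular scheme is in general only normal --- its underlying scheme has toric singularities, and so does $X'$ --- and purity fails for merely normal schemes. Extending the cover across the locus of codimension $\geq 2$ is precisely the hard content of the Fujiwara--Kato log purity theorem, i.e.\ of the very statement being proved, so your invocation is circular. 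Your argument is complete only in the special case of the remarque \ref{GMlike}, $(a)$ ($X$ regular and $X\backslash U$ a normal crossings divisor), where the classical Grothendieck--Murre/SGA~1 route (Abhyankar, then purity, then the descent step you describe --- whose details, unique extension of the descent isomorphisms via \cite[6.1.14]{ega2} and effectivity via representability of sheaf torsors, are sound) does go through. In the general log regular case one must either reduce to that case by Kato's resolution of log regular schemes (log blow-ups/toric desingularization) or prove log purity directly; that step is missing. Note also that your $X'$ is only constructed locally around codimension-$1$ points, so the argument should be run \'etale-locally and the local extensions glued, which is legitimate thanks to the uniqueness furnished by proposition \ref{injectrestr}, but deserves to be said.
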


La notion de ramification utilis\'ee dans ce th\'eor\`eme est la notion usuelle pour un morphisme fini de traits (voir \cite[2.2.2 et 2.2.3]{gm}).

\begin{rmq}
\label{GMlike}
$(a)$ Si $X$ est un sch\'ema r\'egulier, muni de la log structure associ\'ee \`a un ouvert dont le compl\'ementaire est un diviseur \`a croisements normaux $D$, alors il est log r\'egulier en tant que log sch\'ema, et le th\'eor\`eme ci-dessus s'applique.

$(b)$ Sous les hypoth\`eses de $(a)$, les $G$-torseurs sur $X$ pour la topologie Kummer log \'etale sont exactement les rev\^etements de $X$ mod\'er\'ement ramifi\'es relativement \`a $D$ (au sens de \cite[Def. 2.2.2]{gm}) dont la restriction \`a $U$ est un $G$-torseur pour la topologie \'etale. En particulier, si $Y\rightarrow X$ est un $G$-torseur pour la topologie log \'etale, alors le sch\'ema sous-jacent \`a $Y$ est normal (donc est le normalis\'e de $X$ dans $Y_U$).
\end{rmq}

Dans la m\^eme veine, la proposition ci-dessous (voir \cite[prop. 3.2.1]{gil5}) d\'ecrit une situation dans laquelle le morphisme de restriction est un isomorphisme. Ceci confirme l'intuition qu'un groupe de type multiplicatif agit toujours de fa\c{c}on mod\'er\'ee.

\begin{prop}
\label{dejafait}
Soit $X$ un sch\'ema n\oe{}th\'erien r\'egulier, muni de la log structure associ\'ee \`a un ouvert $U$ dont le compl\'ementaire est un diviseur \`a croisements normaux sur $X$. Alors, pour tout entier $n\geq 1$, le morphisme de restriction
$$
H^1_{\kpl}(X,\mu_n)\longrightarrow H^1_{\fppf}(U,\mu_n)
$$
est un isomorphisme.
\end{prop}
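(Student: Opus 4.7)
Since $U$ is the open of triviality, the log structure on $U$ is trivial, $\gmlog|_U=\gm$, and the log flat cohomology on $U$ coincides with the fppf cohomology. The injectivity of the restriction map is already given by Proposition \ref{injectrestr}, so the entire burden is to prove surjectivity. My plan is to compare long exact Kummer sequences on $X$ and on $U$ and then invoke the five lemma.

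Concretely, I would apply cohomology to the log flat Kummer sequence $0\to\mu_n\to\gmlog\xrightarrow{n}\gmlog\to 0$ on $X$ and to the classical Kummer sequence $0\to\mu_n\to\gm\xrightarrow{n}\gm\to 0$ on $U$, obtaining a commutative diagram of short exact sequences
$$
\begin{CD}
0 @>>> M_X^{\rm gp}(X)/n @>>> H^1_{\kpl}(X,\mu_n) @>>> H^1_{\kpl}(X,\gmlog)[n] @>>> 0 \\
@. @VVV @VVV @VVV \\
0 @>>> \mathcal{O}_U^*(U)/n @>>> H^1_{\fppf}(U,\mu_n) @>>> \pic(U)[n] @>>> 0.
\end{CD}
$$
The leftmost vertical arrow is an isomorphism: by the very definition of the log structure associated to $U$ (Example \ref{debase}), one has $M_X=j_*\mathcal{O}_U^*\cap\mathcal{O}_X$, and because $X$ is regular with $D$ a NCD, any section of $j_*\mathcal{O}_U^*$ is locally of the form $f/g$ with $f,g\in M_X$ (obtained by multiplying by a suitable power of a local equation of $D$). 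Hence $M_X^{\rm gp}=j_*\mathcal{O}_U^*$ and in particular $M_X^{\rm gp}(X)=\mathcal{O}_U^*(U)$. By the five lemma, it suffices to show that the rightmost arrow $H^1_{\kpl}(X,\gmlog)[n]\to\pic(U)[n]$ is an isomorphism.

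For this remaining identification, I would exploit the exact sequence of log flat sheaves $0\to\gm\to\gmlog\to\overline{M}_X^{\rm gp}\to 0$ in conjunction with Example \ref{logpic}, which exhibits $H^1_{\kpl}(X,\gm)$ as an extension of $\bigoplus_m(\mathbb{Q}/\mathbb{Z})\,D_m$ by $\pic(X)$. The connecting map $\overline{M}_X^{\rm gp}(X)=\bigoplus\mathbb{Z}\,D_m\to H^1_{\kpl}(X,\gm)$ lands in the subgroup $\pic(X)$ and sends $D_m$ to the class of $\mathcal{O}_X(D_m)$; combining this with the classical divisor-class sequence $\bigoplus\mathbb{Z}\,D_m\to\pic(X)\to\pic(U)\to 0$ (valid since $X$ is regular) should identify the restriction map $H^1_{\kpl}(X,\gmlog)\to\pic(U)$ with an isomorphism.

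The main obstacle is precisely this last identification: one must control $H^1_{\kpl}(X,\overline{M}_X^{\rm gp})$ (to guarantee that the connecting map $H^1_{\kpl}(X,\gmlog)\to H^1_{\kpl}(X,\overline{M}_X^{\rm gp})$ vanishes) and track how the $\pic(X)$-part and the $\bigoplus(\mathbb{Q}/\mathbb{Z})D_m$-part of $H^1_{\kpl}(X,\gm)$ interact after quotienting by the image of $\overline{M}_X^{\rm gp}(X)$. An alternative, more hands-on approach to the same end would be to construct the surjectivity explicitly: starting from a $\mu_n$-torsor $(\mathcal{L},\phi)$ on $U$, extend $\mathcal{L}$ to a line bundle $\tilde{\mathcal{L}}$ on $X$ (possible by regularity), write $\tilde{\mathcal{L}}^{\otimes n}\simeq\mathcal{O}_X(E)$ for some $E=\sum a_mD_m$, and encode the boundary data via a standard Kummer torsor (Proposition-definition \ref{kummertorsor}) associated to the chart $\mathbb{N}^r\to\frac{1}{n}\mathbb{N}^r$ with multiplicities $a_m$.
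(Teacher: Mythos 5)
First, a point of calibration: the paper contains no internal proof of this proposition — it is quoted from \cite[prop. 3.2.1]{gil5} — so your argument can only be measured against the toolkit of the present paper. Measured that way, your primary route has a genuine gap, which you yourself flag: after the five-lemma reduction, the entire content of the proposition is concentrated in the unproved claim that $H^1_{\kpl}(X,\gmlog)[n]\rightarrow\pic(U)[n]$ is bijective, which is essentially equivalent to the statement being proved. Worse, the device you propose for closing it is incorrect as stated: the sequence $0\rightarrow\gm\rightarrow\gmlog\rightarrow\overline{M}_X^{\rm gp}\rightarrow 0$ is \emph{not} exact for the Kummer log flat topology. In the paper's conventions (after Theorem \ref{epsilonkato}), $\overline{M}_X^{\rm gp}$ is the \emph{fppf} direct image $\varepsilon_*(\gmlog/\gm)$; the kpl quotient sheaf $\gmlog/\gm$ is the divisible limit $\varinjlim_n \frac{1}{n}\overline{M}_X^{\rm gp}$, because the standard Kummer torsors of Proposition-d\'efinition \ref{kummertorsor} adjoin $n$-th roots of the log structure — the very mechanism that makes the Kummer sequence for $\gmlog$ exact in this topology. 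With the corrected quotient, the connecting map no longer lands in $\pic(X)$ (the class $\frac{a}{n}D_m$ has image $\frac{a}{n}$ under the map $\nu$ of Example \ref{logpic}), and controlling $H^1_{\kpl}$ of this quotient is of the same order of difficulty as the proposition itself; in practice one is driven back to the computation $H^1_{\kpl}(X,\gm)\simeq\DivRat(X,D)/\Divp(X)$ of \cite[th. 3.1.3]{gil5}. (Your left-hand column is fine: $M_X^{\rm gp}\simeq j_*\mathcal{O}_U^*$ does hold here, by factoriality of the local rings of $X$.)

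By contrast, your closing \emph{hands-on} sketch is the right proof, and it can be completed using only results of this paper, without circularity (neither Theorem \ref{logcyclic} nor Corollary \ref{logcyclor} uses Proposition \ref{dejafait}). Injectivity is Proposition \ref{injectrestr} (the complement of $U$ is a divisor, so $U$ is dense, and the connected components of the regular scheme $X$ are integral and normal). For surjectivity, in view of the commutative square in the remark following Corollary \ref{logcyclor}, where $\mathfrak{S}_U$ is an isomorphism, it suffices to show that $r_U:\Rac(X,D,n)\rightarrow\Rac(U,\emptyset,n)$ is surjective. Given $(\mathcal{L}_0,\phi_0)$ on $U$, extend $\mathcal{L}_0$ to a line bundle $\tilde{\mathcal{L}}$ on $X$ ($X$ regular, hence locally factorial, so $\pic(X)\rightarrow\pic(U)$ is onto); then $\phi_0$ defines a rational homomorphism $\tilde{\mathcal{L}}^{\otimes n}\rightarrow\mathcal{O}_X$, invertible on $U$, with divisor $\sum_m a_m D_m$, $a_m\in\mathbb{Z}$; replacing $\tilde{\mathcal{L}}$ by $\mathcal{L}:=\tilde{\mathcal{L}}\otimes\mathcal{O}_X(-\sum_m c_m D_m)$ with $a_m+nc_m\geq 0$ turns $\phi_0$ into a genuine monomorphism $\phi:\mathcal{L}^{\otimes n}\rightarrow\mathcal{O}_X$ whose image ideal is supported on $D$, and $(\mathcal{L},\phi)$ restricts to $(\mathcal{L}_0,\phi_0)$, as required. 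One slip to correct in your last sentence: the relevant local chart is not $\mathbb{N}^r\rightarrow\frac{1}{n}\mathbb{N}^r$ — the standard Kummer torsor of that chart has group $\mu_n^r$, not $\mu_n$ — but $u:\mathbb{N}^r\rightarrow Q$ with $Q$ the saturation of the submonoid generated by $\mathbb{N}^r$ and $\frac{1}{n}(a_1,\dots,a_r)$; in any case, running the argument through $\mathfrak{S}$ of Corollary \ref{logcyclor} dispenses with charts and with the gluing they would otherwise require.
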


%%%%%%%%%%%%%%%%%%%%%%%%%%%%%%%%%%%%%%%%%%%%%

\subsection{Rev\^etements cycliques uniformes}
\label{torseurscycliques}

Nous rappelons bri\`evement en quoi consistent les rev\^etements cycliques uniformes. Nous nous appuyons pour cela sur les travaux de Arsie et Vistoli \cite{av}.

Pour toute la dur\'ee de ce paragraphe, nous fixons un entier naturel $n\geq 1$.

\begin{dfn}
\label{cyclicunif}
Soit $X$ un sch\'ema n\oe{}th\'erien. Un rev\^etement cyclique uniforme de degr\'e $n$ de $X$ est un morphisme de sch\'emas $Y\rightarrow X$ avec une action de $\mu_n$ sur $Y$, satisfaisant la condition suivante : tout point de $X$ admet un voisinage ouvert affine $V=\Spec(R)\subseteq X$ tel que le morphisme $Y_V\rightarrow V$ soit de la forme $\Spec(R[t]/(t^n-a))\rightarrow\Spec(R)$ avec $a$ un \'el\'ement de $R$ qui n'est pas un diviseur de $0$, et tel que l'action de $\mu_n$ sur $Y$ induise l'action \'evidente sur $\Spec(R[t]/(t^n-a))$.
\end{dfn}

Si $f:Y\rightarrow X$ est un rev\^etement cyclique uniforme, on peut d\'efinir son diviseur de branchement $\Delta_f\subseteq X$ comme \'etant l'ensemble des couples $(V,a)$ comme dans la d\'efinition, pour $V$ variant sur un recouvrement de $X$ convenablement choisi. De m\^eme, le diviseur de ramification $D_f\subseteq Y$ est l'ensemble des couples $(Y_V,t)$. On constate aussit\^ot la relation $f^*\Delta_f=nD_f$ entre les deux diviseurs.

Le th\'eor\`eme ci-dessous donne une correspondance entre $\mu_n$-torseurs pour la topologie log plate et rev\^etements cycliques uniformes.

\begin{thm}
\label{logcyclic}
Soit $X$ un sch\'ema n\oe{}th\'erien r\'egulier, muni de la log structure associ\'ee \`a un ouvert $U$ dont le compl\'ementaire $D$ est un diviseur \`a croisements normaux sur $X$. Soit $Y\rightarrow X$ un rev\^etement cyclique uniforme de degr\'e $n$, dont le support du diviseur de branchement est contenu dans le support de $D$. On munit $Y$ de la log structure associ\'ee \`a l'ouvert compl\'ementaire du diviseur de ramification. Alors $Y\rightarrow X$ est un $\mu_n$-torseur pour la topologie log plate. R\'eciproquement, tout $\mu_n$-torseur sur $X$ pour la topologie log plate est obtenu de la sorte.
\end{thm}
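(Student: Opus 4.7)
Pour le sens direct, soit $f:Y\to X$ un rev\^etement cyclique uniforme de degr\'e $n$, de diviseur de branchement contenu dans $D$. La question \'etant locale sur $X$, on peut supposer que $X=\Spec R$ admet une carte \'etale locale $\mathbb{N}^r\to M_X$ envoyant les g\'en\'erateurs sur les \'equations $t_1,\ldots,t_r$ des composantes de $D$, et que $Y=\Spec(R[s]/(s^n-a))$ avec $a\in R$ non diviseur de z\'ero et $V(a)\subseteq D$. Puisque $X$ est r\'egulier et $D$ \`a croisements normaux, on peut \'ecrire $a=u\prod_{i=1}^{r} t_i^{k_i}$ avec $u\in R^*$ et $k_i\geq 0$, de sorte que $a$ d\'efinit une section de $M_X$. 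Le cobord $\delta:M_X^{\rm gp}(X)\to H^1_{\kpl}(X,\mu_n)$ de la suite \eqref{cobord1} envoie alors $a$ sur le torseur kumm\'erien standard obtenu en adjoignant une $n$-i\`eme racine formelle $s$ de $a$ ; son sch\'ema sous-jacent est pr\'ecis\'ement $\Spec(R[s]/(s^n-a))=Y$, et sa log structure, engendr\'ee par $M_X$ et par $s$, co\"incide avec celle d\'efinie sur $Y$ par le compl\'ementaire du diviseur de ramification $V(s)$.

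Pour la r\'eciproque, soit $Y\to X$ un $\mu_n$-torseur pour la topologie Kummer log plate. D'apr\`es la proposition \ref{dejafait}, l'application $H^1_{\kpl}(X,\mu_n)\to H^1_{\fppf}(U,\mu_n)$ est bijective : il suffit donc d'exhiber localement un rev\^etement cyclique uniforme dont le $\mu_n$-torseur log plat associ\'e par le sens direct ait pour restriction \`a $U$ le torseur $Y_U$. Quitte \`a restreindre, on suppose $R$ local r\'egulier, de sorte que $\pic(U)=0$ par excision ; la th\'eorie de Kummer classique fournit alors $Y_U=\Spec(\mathcal{O}(U)[s]/(s^n-a))$ avec $a\in \mathcal{O}(U)^*$. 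Le diviseur de $a$ sur $X$ \'etant support\'e par $D$, on peut, en rempla\c{c}ant $a$ par $a\cdot b^n$ pour un $b\in\mathcal{O}(U)^*$ convenable (ce qui ne modifie pas le torseur), supposer $a=u\prod_i t_i^{k_i}$ avec $u\in R^*$ et $0\leq k_i<n$ ; en particulier $a\in R$ est non diviseur de z\'ero, et $\Spec(R[s]/(s^n-a))$ est le rev\^etement cyclique uniforme cherch\'e.

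Le point technique d\'elicat est l'identification, dans le sens direct, de la log structure du torseur kumm\'erien standard $\delta(a)$ avec celle d\'efinie par le compl\'ementaire de $V(s)$ sur $\Spec(R[s]/(s^n-a))$. Elle repose sur la description explicite de $\delta(a)$ via la carte kumm\'erienne $\mathbb{N}^r\to Q$, $Q$ \'etant la saturation du mono\"ide $\mathbb{N}^r\oplus \mathbb{N}\cdot s/(ns=\sum_i k_i e_i)$ : apr\`es extension fppf extrayant une racine $n$-i\`eme de l'unit\'e $u$, $Q$ gouverne exactement la log structure sur $Y$ associ\'ee au compl\'ementaire de $V(s)$, ce qui assure la compatibilit\'e voulue.
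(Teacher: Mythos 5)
Your proof is correct to the same standard as the paper's, and the two directions compare differently. For the direct sense you follow essentially the paper's route: the paper simply observes that $a$, defining the branch divisor, is invertible on $U$, hence is a global section of $M_X^{\rm gp}$, and identifies the cover with the image $\delta(a)$ of the coboundary \eqref{cobord1}; your chart-level unwinding ($a=u\prod_i t_i^{k_i}$, the monoid $Q$) is an explicit version of the same identification. For the converse you take a genuinely different route. The paper does not invoke Proposition \ref{dejafait}: it uses the exact sequence \eqref{spec1} for $\mu_n$, picks an integer representative $(k_1,\dots,k_r)$ of $\nu_n(Y)$, twists $Y$ by $\delta(b_1^{k_1}\cdots b_r^{k_r})^{-1}$ to land in $H^1_{\fppf}(X,\mu_n)$, which Zariski-locally is $\delta(\alpha)$ with $\alpha$ a unit, and concludes $Y=\delta(\alpha b_1^{k_1}\cdots b_r^{k_r})$. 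Your shortcut through the bijectivity of $H^1_{\kpl}(X,\mu_n)\to H^1_{\fppf}(U,\mu_n)$, together with $\pic(U)=0$ over a regular local ring and classical Kummer theory on $U$, is shorter and legitimate (Proposition \ref{dejafait} is imported from \cite{gil5}, so there is no circularity), but it consumes a strictly stronger input; the paper's twisting argument uses only \eqref{spec1} and, as a by-product, identifies $\nu_n(Y)$ with the branch divisor, an identification the paper exploits immediately after the theorem.

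One caveat, which affects your text and the paper's equally: the \emph{point technique d\'elicat} you isolate is real, but your resolution of it is asserted rather than proved, and as stated it is not quite true. The underlying scheme of the standard Kummer torsor is $\Spec$ of the \emph{saturated} algebra $\mathbb{Z}[Q]$, and this coincides with $R[s]/(s^n-a)$ only when the integralization of $\mathbb{N}^r\oplus\mathbb{N}s/(ns=\sum_i k_ie_i)$ is already saturated. For $a=t_1^n$ one has $\delta(a)=0$, whose underlying scheme is $X\times\mu_n$, not the non-normal scheme $\Spec(R[s]/(s^n-t_1^n))$; and even with your normalization $0\leq k_i<n$ the problem persists for $r\geq 2$, e.g. for $n=4$, $(k_1,k_2)=(2,1)$ the saturation contains the extra element $s^2/t_1$ (a square root of $t_2$), so $\Spec(\mathbb{Z}[Q])$ strictly dominates $\Spec(R[s]/(s^4-t_1^2t_2))$. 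In such degenerate cases the divisorial log structure on $\Spec(R[s]/(s^n-a))$ is not fs and the identification fails, so either an argument restricting the local multiplicities or a saturation step is genuinely needed. The paper's proof makes the same silent leap (``On en d\'eduit que\dots est un torseur''), so this is a gap in the theorem's bookkeeping shared by both proofs rather than a defect of your approach specifically; but since you explicitly flagged the point, you should also note that your description of $Q$ does not by itself settle it.
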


\begin{proof}
Soit $Y\rightarrow X$ un rev\^etement cyclique uniforme de degr\'e $n$, dont le support du diviseur de branchement est contenu dans le compl\'ementaire de $U$. Quitte \`a travailler localement pour la topologie de Zariski sur $X$, on peut supposer que le morphisme $Y\rightarrow X$ est de la forme $\Spec(R[t]/(t^n-a))\rightarrow\Spec(R)$ avec $a$ un \'el\'ement de $R$ qui n'est pas un diviseur de $0$. Alors le diviseur de branchement est d\'efini par $a$, lequel est par cons\'equent un \'el\'ement inversible dans $\Gamma(U,\mathcal{O}_U)$. On en d\'eduit que $\Spec(R[t]/(t^n-a))\rightarrow\Spec(R)$ est un torseur pour la topologie Kummer log plate, le sch\'ema du haut \'etant muni de la log structure associ\'ee \`a l'ouvert compl\'ementaire du diviseur d\'efini par $t$ (c'est-\`a-dire au diviseur de ramification). Plus pr\'ecis\'ement, ce torseur est l'image de $a$ par le cobord $\delta:\Gamma(U,\mathcal{O}_U^*)=M_X^{\rm gp}(X)\longrightarrow H^1_{\kpl}(X,\mu_n)$ de la suite de Kummer pour $\gmlog$ (voir \eqref{cobord1}).

R\'eciproquement, soit $Y\rightarrow X$ un $\mu_n$-torseur pour la topologie log plate. Quitte \`a travailler localement pour la topologie de Zariski sur $X$, on peut supposer que $D$ est un diviseur dont les composantes irr\'eductibles $D_1,\dots, D_r$ sont principales, disons engendr\'ees par $b_1,\dots,b_r\in \Gamma(X,\mathcal{O}_X)$. Alors la suite exacte \eqref{spec1} pour le groupe $G=\mu_n$ s'\'ecrit
$$
\begin{CD}
0\longrightarrow H^1_{\fppf}(X,\mu_n) @>>>  H^1_{\kpl}(X,\mu_n) @>\nu_n>> \bigoplus_{m=1}^r (\mathbb{Z}/n\mathbb{Z}).D_m
\end{CD}
$$
le groupe $\overline{M}_X^{\rm gp}(X)$ \'etant canoniquement isomorphe \`a $\bigoplus_{m=1}^r \mathbb{Z}.D_m$ (voir l'exemple \ref{logpic}). Soit $(k_1,\dots, k_r)$ un repr\'esentant entier de $\nu_n(Y)$, alors $\delta(b_1^{k_1}\dots b_r^{k_r})$ et $Y$ ont la m\^eme image par $\nu_n$. C'est-\`a-dire que le produit contract\'e $Y\vee_{\mu_n} \delta(b_1^{k_1}\dots b_r^{k_r})^{-1}$ est un $\mu_n$-torseur pour la topologie fppf. Quitte \`a se localiser encore une fois pour la topologie de Zariski sur $X$, on peut supposer que ce dernier est de la forme $\delta(\alpha)$ avec $\alpha\in \Gamma(X,\mathcal{O}_X^*)$. On constate alors que $Y=\delta(\alpha b_1^{k_1}\dots b_r^{k_r})$ et par cons\'equent le sch\'ema sous-jacent \`a $Y$ est le spectre de $\mathcal{O}_X[t]/(t^n-\alpha b_1^{k_1}\dots b_r^{k_r})$. Ainsi $Y\rightarrow X$ est un rev\^etement cyclique uniforme.
\end{proof}

Dans \cite[section 2]{av} on donne une description alternative des rev\^etements cycliques uniformes, que nous allons d\'etailler.

Soit $\RAC(X,n)$ la cat\'egorie suivante. Les objets de $\RAC(X,n)$ sont les couples $(\mathcal{L},\phi)$ o\`u $\mathcal{L}$ est un faisceau inversible sur $X$ et $\phi:\mathcal{L}^{\otimes n}\rightarrow \mathcal{O}_X$ est un monomorphisme.
Un morphisme entre deux couples $(\mathcal{L},\phi)$ et $(\mathcal{N},\psi)$ est la donn\'ee d'un isomorphisme $\kappa:\mathcal{L}\rightarrow \mathcal{N}$ tel que $\psi\circ \kappa^{\otimes n}=\phi$. On obtient ainsi un groupo\"ide (tous les morphismes sont des isomorphismes). En outre, on peut d\'efinir le produit de deux objets  $(\mathcal{L},\phi)$ et $(\mathcal{N},\psi)$ comme \'etant le couple $(\mathcal{L}\otimes \mathcal{N},m\circ(\phi\otimes\psi))$, o\`u $m:\mathcal{O}_X\otimes\mathcal{O}_X\rightarrow\mathcal{O}_X$ est l'isomorphisme canonique induit par la multiplication de $\mathcal{O}_X$. L'\'el\'ement neutre est le couple $(\mathcal{O}_X,i_X)$ o\`u $i_X:\mathcal{O}_X^{\otimes n}\simeq \mathcal{O}_X$ est l'isomorphisme canonique.

La cat\'egorie $\RAC(X,n)$,  munie de ce produit, est une cat\'egorie de Picard strictement commutative (au sens de \cite[expos\'e XVIII, 1.4.2]{gro4t3}). On note $\Rac(X,n)$ le groupe des classes d'isomorphie d'objets de cette cat\'egorie.

\'Etant donn\'e un objet $(\mathcal{L},\phi)$ de $\RAC(X,n)$, on peut lui associer une $\mathcal{O}_X$-alg\`ebre
$$
\Sa(\mathcal{L},\phi):=\Sym(\mathcal{L})/(\phi:\mathcal{L}^{\otimes n}\rightarrow \mathcal{O}_X)=\mathcal{O}_X\oplus \mathcal{L}\oplus\dots\oplus\mathcal{L}^{\otimes (n-1)}
$$
(o\`u $\Sym(\mathcal{L})$ d\'esigne l'alg\`ebre sym\'etrique de $\mathcal{L}$) dans laquelle la loi de multiplication est induite par le produit tensoriel et par l'application $\phi$. Le spectre de $\Sa(\mathcal{L},\phi)$ est un rev\^etement cyclique uniforme de degr\'e $n$ de $X$.
L'image de $\phi$ est l'id\'eal d\'efinissant le diviseur de branchement de ce rev\^etement.

On v\'erifie ais\'ement (voir \cite[section 2]{av}) que tout rev\^etement cyclique uniforme de degr\'e $n$ de $X$ est obtenu de la sorte. Ainsi, le diviseur de branchement d'un rev\^etement cyclique uniforme de degr\'e $n$ est canoniquement muni d'une racine $n$-i\`eme.

Soient $D$ un diviseur sur $X$, et $U$ son ouvert compl\'ementaire. On note $\Rac(X,D,n)$ le sous-groupe de $\Rac(X,n)$ constitu\'e des couples $(\mathcal{L},\phi)$ tels que le support du diviseur d\'efini par le faisceau d'id\'eaux image de $\phi$ soit contenu dans le support de $D$.

Le groupe $\Rac(X,\emptyset,n)$ est l'ensemble des couples $(\mathcal{L},\phi)$ tels que $\phi$ soit un isomorphisme. Il est bien connu qu'un tel couple d\'efinit un $\mu_n$-torseur pour la topologie fppf, en d'autres termes $\Rac(X,\emptyset,n)$ est isomorphe \`a $H^1_{\fppf}(X,\mu_n)$.
Le corollaire ci-dessous constitue une g\'en\'eralisation de ce r\'esultat en topologie Kummer log plate.

\begin{cor}
\label{logcyclor}
Soit $X$ un sch\'ema n\oe{}th\'erien r\'egulier, muni de la log structure associ\'ee \`a un ouvert $U$ dont le compl\'ementaire $D$ est un diviseur \`a croisements normaux sur $X$. Alors l'application
$$
\mathfrak{S}:\Rac(X,D,n)\longrightarrow H^1_{\kpl}(X,\mu_n)
$$
qui envoie un couple $(\mathcal{L},\phi)$ sur le spectre de l'alg\`ebre $\Sa(\mathcal{L},\phi)$, est un morphisme surjectif de groupes.
\end{cor}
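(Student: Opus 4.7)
Le plan consiste \`a exploiter la correspondance du th\'eor\`eme \ref{logcyclic} et la description locale standard des rev\^etements cycliques uniformes. D'abord, $\mathfrak{S}$ est bien d\'efinie sur les classes d'isomorphie : un isomorphisme $\kappa:\mathcal{L}\to\mathcal{N}$ avec $\psi\circ\kappa^{\otimes n}=\phi$ induit manifestement un isomorphisme de $\mathcal{O}_X$-alg\`ebres $\Sa(\mathcal{L},\phi)\simeq \Sa(\mathcal{N},\psi)$ compatible avec l'action de $\mu_n$. Ensuite, l'image de $\mathfrak{S}$ tombe dans $H^1_{\kpl}(X,\mu_n)$ par le th\'eor\`eme \ref{logcyclic} : pour $(\mathcal{L},\phi)\in\Rac(X,D,n)$, le sch\'ema $\Spec(\Sa(\mathcal{L},\phi))$ est un rev\^etement cyclique uniforme dont le diviseur de branchement, d\'efini par l'image de $\phi$, est support\'e par $D$. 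Pour la surjectivit\'e, on applique l'implication r\'eciproque du m\^eme th\'eor\`eme : tout $\mu_n$-torseur log plat sur $X$ est un rev\^etement cyclique uniforme \`a branchement dans $D$, et d'apr\`es la description rappel\'ee ci-dessus il s'\'ecrit $\Spec(\Sa(\mathcal{L},\phi))$ pour un certain $(\mathcal{L},\phi)\in\Rac(X,D,n)$.

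Le c\oe{}ur de la d\'emonstration est le caract\`ere morphique de $\mathfrak{S}$. Pour l'\'etablir, je travaille localement pour la topologie de Zariski sur un ouvert affine $V=\Spec(R)\subseteq X$ assez petit pour trivialiser simultan\'ement $\mathcal{L}$ et $\mathcal{N}$. Le choix de telles trivialisations permet de lire $\phi$ et $\psi$ comme multiplications par des \'el\'ements non diviseurs de z\'ero $a,b\in R$, lesquels sont inversibles sur $U$ (puisque le branchement est port\'e par $D$) et appartiennent donc \`a $M_X^{\rm gp}(V)$. La preuve du th\'eor\`eme \ref{logcyclic} donne alors $\mathfrak{S}(\mathcal{L},\phi)|_V=\delta(a)$ et $\mathfrak{S}(\mathcal{N},\psi)|_V=\delta(b)$, o\`u $\delta$ est le cobord de Kummer \eqref{cobord1}. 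Comme le produit dans $\RAC(X,n)$ se lit localement comme multiplication, on obtient
$$
\mathfrak{S}\bigl((\mathcal{L},\phi)\otimes(\mathcal{N},\psi)\bigr)|_V=\delta(ab)=\delta(a)+\delta(b),
$$
d'o\`u la morphie par recollement. L'objet neutre $(\mathcal{O}_X,i_X)$ correspond \`a $a=1$, dont l'image $\delta(1)=0$ est bien le torseur trivial.

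Le point technique \`a soigner sera l'ind\'ependance de cette description locale vis-\`a-vis des trivialisations choisies : un autre choix modifie $a$ en $u^na$ pour une unit\'e $u\in\mathcal{O}_X^*(V)$, et l'on doit v\'erifier que $\delta(u^na)=\delta(a)$. Cela r\'esulte aussit\^ot de la suite exacte de Kummer pour $\gmlog$, puisque $u^n$ est dans l'image de l'\'el\'evation \`a la puissance $n$ sur $\gmlog$ et tombe donc dans le noyau de $\delta$. Ce point g\'er\'e, les identifications locales se recollent sans difficult\'e et fournissent la conclusion.
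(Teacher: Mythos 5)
Your treatment of well-definedness and of surjectivity is correct and is essentially the paper's own (implicit) derivation: the corollary is stated as a consequence of the th\'eor\`eme \ref{logcyclic} combined with the Arsie--Vistoli description of uniform cyclic covers as spectra of algebras $\Sa(\mathcal{L},\phi)$, which is exactly what you do. The gap is in your verification that $\mathfrak{S}$ is a group morphism. From the local identities $\mathfrak{S}(\mathcal{L},\phi)|_V=\delta(a)$, $\mathfrak{S}(\mathcal{N},\psi)|_V=\delta(b)$ and $\mathfrak{S}\bigl((\mathcal{L},\phi)\otimes(\mathcal{N},\psi)\bigr)|_V=\delta(ab)$ on the members of a Zariski cover, you conclude \guillemotleft{} par recollement \guillemotright{} that the global classes agree. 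That inference is invalid: equality of classes in $H^1_{\kpl}(X,\mu_n)$ cannot be tested Zariski-locally, because this group contains many Zariski-locally trivial but globally nontrivial classes --- already the image of $\pic(X)[n]$ inside $H^1_{\fppf}(X,\mu_n)$, and, tellingly, the remark following the corollary in the paper exhibits pairs $(I,\iota)\in\Rac(X,D,n)$ with $I$ non-principal, which are Zariski-locally isomorphic to the neutral pair without being trivial. Your check that $\delta(u^na)=\delta(a)$ only shows that the local \emph{class} is independent of the chosen trivialization; to glue anything you would need the local \emph{isomorphisms of torsors} to be compatible on overlaps, and a change of trivialization composes them with automorphisms of the torsor (sections of $\mu_n$), a discrepancy your argument does not control.

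The repair is short, and two correct routes are available with the paper's tools. Either make your local computation global: the algebra $\Sa(\mathcal{L}\otimes\mathcal{N},m\circ(\phi\otimes\psi))=\bigoplus_{i=0}^{n-1}(\mathcal{L}\otimes\mathcal{N})^{\otimes i}$ is canonically the subalgebra of antidiagonal $\mu_n$-invariants of $\Sa(\mathcal{L},\phi)\otimes_{\mathcal{O}_X}\Sa(\mathcal{N},\psi)$, and this trivialization-free isomorphism identifies $\mathfrak{S}\bigl((\mathcal{L},\phi)\otimes(\mathcal{N},\psi)\bigr)$ with the contracted product $\mathfrak{S}(\mathcal{L},\phi)\vee_{\mu_n}\mathfrak{S}(\mathcal{N},\psi)$, whose class is the sum. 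Or, shorter, use the injectivity of restriction: $H^1_{\kpl}(X,\mu_n)\rightarrow H^1_{\fppf}(U,\mu_n)$ is injective (c'est m\^eme un isomorphisme, proposition \ref{dejafait}; see also proposition \ref{injectrestr}), restriction to $U$ commutes with $\mathfrak{S}$ since $\Sa(\mathcal{L},\phi)|_U=\Sa(\mathcal{L}|_U,\phi|_U)$, and $\mathfrak{S}_U:\Rac(U,\emptyset,n)\rightarrow H^1_{\fppf}(U,\mu_n)$ is the classical group isomorphism; multiplicativity over $X$ then follows at once. With either fix your proof is complete, and your Zariski-local computation survives as the fibrewise content of the antidiagonal identification.
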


\begin{rmq}
Le morphisme $\mathfrak{S}$  n'est pas en g\'en\'eral un isomorphisme, bien qu'il le soit dans le cas o\`u $D=\emptyset$. Pour \'eclaircir la situation, consid\'erons le diagramme commutatif suivant
$$
\begin{CD}
\Rac(X,D,n) @>\mathfrak{S}>> H^1_{\kpl}(X,\mu_n) \\
@Vr_U VV @VVV \\
\Rac(U,\emptyset,n) @>\mathfrak{S}_U>> H^1_{\fppf}(U,\mu_n) \\
\end{CD}
$$
dans lequel les fl\`eches verticales sont les morphismes de restriction. On sait que $\mathfrak{S}_U$ est un isomorphisme, et que la fl\`eche verticale de droite est injective (c'est m\^eme un isomorphisme d'apr\`es la proposition \ref{dejafait}). On en d\'eduit que le noyau de $\mathfrak{S}$ est \'egal au noyau de $r_U$.

Nous allons construire un exemple. Supposons que $I\subseteq \mathcal{O}_X$ soit un id\'eal localement libre de rang $1$, d\'efinissant un diviseur dont le support est contenu dans celui de $D$. Soit $\iota:I^{\otimes n}\simeq I^n\subseteq \mathcal{O}_X$ le monomorphisme canonique. Alors le couple $(I,\iota)$ est un \'el\'ement de $\Rac(X,D,n)$. De plus, l'image du couple $(I,\iota)$ par l'application $r_U$ est le couple $(\mathcal{O}_U,i_U)$ o\`u $i_U:\mathcal{O}_U^{\otimes n}\simeq \mathcal{O}_U$ est l'isomorphisme canonique. Autrement dit, $r_U(I,\iota)$ est l'\'el\'ement neutre de $\Rac(U,\emptyset,n)$, et $\mathfrak{S}(I,\iota)$ est le torseur trivial.

Enfin, il n'est pas difficile de voir que la classe d'isomorphie du couple $(I,\iota)$ est l'\'el\'ement neutre de $\Rac(X,D,n)$ si et seulement si $I$ est un id\'eal principal. Il suffit donc de choisir un id\'eal $I$ non principal pour obtenir un \'el\'ement non nul dans le noyau de $\mathfrak{S}$. Un tel choix est possible d\`es lors que $D$ n'est pas un diviseur principal.
\end{rmq}

Sous les hypoth\`eses de \ref{logcyclic} et \ref{logcyclor} (et avec les notations de l'exemple \ref{logpic}), le compl\'ementaire de $U$ est un diviseur $D=\sum_{m=1}^r D_m$ \`a croisements normaux \`a multiplicit\'es $1$ sur $X$. Nous avons d\'ecrit dans \cite[prop. 3.2.2]{gil5} une suite exacte
$$
\begin{CD}
0 \rightarrow H^1_{\fppf}(X,\mu_n)\rightarrow H^1_{\kpl}(X,\mu_n) @>\nu_n>> \bigoplus_{m=1}^r (\frac{1}{n}\mathbb{Z}/\mathbb{Z}).D_m
@>\theta_n>> \pic(X)/n \rightarrow \pic(U)/n \\
\end{CD}
$$
qui est un avatar de la suite exacte \eqref{spec1} pour le groupe $\mu_n$. Nous allons expliciter ici le morphisme $\nu_n$. Soit $T\rightarrow X$ un $\mu_n$-torseur log plat. En vertu du th\'eor\`eme \ref{logcyclic}, le morphisme de sch\'emas sous-jacent est un rev\^etement cyclique uniforme, dont le support du diviseur de branchement $\Delta_{T/X}$ est contenu dans celui de $D$. On peut donc \'ecrire
$$
\Delta_{T/X}=\sum_{m=1}^r k_m.D_m
$$
o\`u les $k_m$ sont des entiers, bien d\'efinis modulo $n$ d'apr\`es ce qui pr\'ec\`ede. Alors $\nu_n(T)$ est tout simplement $\Delta_{T/X}$ vu en tant que diviseur \`a coefficients dans $\frac{1}{n}\mathbb{Z}/\mathbb{Z}$ (c'est-\`a-dire que l'on remplace $k_m$ par la classe de $k_m/n$ modulo $\mathbb{Z}$).
D'autre part, sachant (cf. corollaire \ref{logcyclor}) que $T$ provient d'un couple $(\mathcal{L},\phi)\in\Rac(X,D,n)$, on obtient l'interpr\'etation suivante : $T$ r\'esulte de l'extraction d'une racine $n$-i\`eme de son diviseur de branchement $\Delta_{T/X}=\nu_n(T)$. Ceci clarifie la situation \'evoqu\'ee dans \cite[Remarque 3.2.3]{gil5}.

\begin{rmq}
\'Etant donn\'es un sch\'ema r\'egulier $X$ et un rev\^etement cyclique uniforme $f:Y\rightarrow X$ de degr\'e $n$, peut-on prolonger $f$ dans la cat\'egorie des log sch\'emas de telle sorte que le morphisme obtenu soit un $\mu_n$-torseur pour la topologie log plate ? Si le diviseur de branchement est \`a croisements normaux, alors le th\'eor\`eme \ref{logcyclic} donne une r\'eponse positive \`a cette question. En revanche, si tel n'est pas le cas, alors la log structure correspondante sur $X$ n'est pas toujours fine. Il faudrait travailler avec des log structures pour la topologie de Zariski (au lieu de la topologie \'etale) pour esp\'erer obtenir un torseur log plat. Nous avons pr\'ef\'er\'e nous cantonner ici au cas de log structures \'etales, lequel cas \'etant le plus courant dans la litt\'erature.
\end{rmq}

%%%%%%%%%%%%%%%%%%%%%%%%%%%%%%%%%%%%%%%%%%%%%

\subsection{Lien avec les actions mod\'er\'ees}

Le but de cette section est d'expliciter le lien entre la notion de torseur pour la topologie Kummer log plate et la notion d'action mod\'er\'ee d'un sch\'ema en groupes sur un sch\'ema introduite par Chinburg, Erez, Pappas et Taylor dans \cite{cept}.

\begin{dfn}
Soient $S$ un sch\'ema affine et $G$ un $S$-sch\'ema en groupes commutatif, fini localement libre. On se donne un $S$-sch\'ema $Y$, muni d'une action de $G$.
\begin{enumerate}
\item[$(1)$] Nous dirons que l'action de $G$ sur $Y$ est mod\'er\'ee si elle l'est au sens de \cite[Definition 7.1]{cept}.
\item[$(2)$] Supposons que $Y$ soit affine, et que l'alg\`ebre de $Y$ soit localement libre de m\^eme rang que celle de $G$ en tant que module sur l'alg\`ebre de $S$. Nous dirons que l'action de $G$ sur $Y$ est CH-mod\'er\'ee si elle l'est au sens de \cite[2.\textit{f}]{cept}.
\end{enumerate}
\end{dfn}

La notion d'action CH-mod\'er\'ee a \'et\'e introduite par Childs et Hurley dans \cite{ch}. Leurs travaux sont formul\'es dans le langage des alg\`ebres de Hopf.

\begin{thm}
\label{CEPTtame}
Soient $S$ un sch\'ema affine et $G$ un $S$-sch\'ema en groupes commutatif, fini localement libre. Soit $X$ un $S$-log sch\'ema localement n\oe{}th\'erien, et soit $Y\rightarrow X$ un $G$-torseur pour la topologie log plate. Alors l'action de $G$ sur le sch\'ema sous-jacent \`a $Y$ est mod\'er\'ee.
\end{thm}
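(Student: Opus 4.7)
The plan is to reduce to a local situation where the torsor $Y\to X$ admits a concrete description via the standard Kummerian construction of Proposition-d\'efinition \ref{kummertorsor}, and then invoke the structure theorem of \cite{cept} characterizing tame actions. First, the conclusion is local on the underlying scheme of $X$ for the fppf topology, since tameness in the sense of \cite[Definition 7.1]{cept} is itself an fppf-local property on the base. So I would localize $X$ for the fppf topology freely throughout.

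Next, I would exploit Corollaire \ref{vidal}, which says that the Kummer log flat topology is generated by classical fppf covers together with standard Kummerian torsors. Consequently, after fppf localization on $X$ we may assume that $Y\to X$ is trivialized as a $G$-torsor by a single standard Kummerian torsor $X'\to X$ under a diagonalizable group $\mu=D(M)$ (with $M$ a finite abelian group). A routine non-abelian cocycle computation then exhibits $Y$ as a contracted product
$$
Y\ \simeq\ X'\wedge^{\mu,\phi} G
$$
for some $X$-group scheme homomorphism $\phi:\mu\to G$, the $G$-action being right translation on the second factor. The key point making this work is that the obstruction to trivializing a $G$-torsor over a $\mu$-torsor with $\mu$ diagonalizable is controlled by $\homr(\mu,G)$ rather than by full non-abelian cohomology, so the descent datum is literally encoded by $\phi$.

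To finish, I would invoke two results of Chinburg--Erez--Pappas--Taylor. First, any action of a commutative finite group scheme of multiplicative type is tame in the sense of \cite[Definition 7.1]{cept}; in particular the natural $\mu$-action on the standard Kummerian cover $X'\to X$ is tame. Second, by \cite[Theorem 6.4]{cept}, the tame $G$-actions on $S$-schemes are characterized, fppf-locally, as precisely those induced along a homomorphism from an action of a multiplicative type group scheme. Since $Y$ equipped with its $G$-action is, by the construction above, induced from the tame $\mu$-action on $X'$ along $\phi:\mu\to G$, it follows that the $G$-action on the underlying scheme of $Y$ is tame.

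The delicate step, and the one I would expect to require the most care, is the reduction in the second paragraph: one must check that after fppf localization a single standard Kummerian cover suffices to trivialize $Y$ (rather than an iterated composition of fppf and Kummerian covers), and that the descent datum for $Y$ is genuinely given by a group homomorphism $\mu\to G$. The first issue is handled by Corollaire \ref{vidal} (refining the trivializing cover) combined with the observation that $G$ is already an fppf sheaf, so the two types of covers may be disentangled; the second is the diagonalizable case of non-abelian descent, where $H^1(\mu,G)$ reduces to $\homr(\mu,G)$ modulo the relevant coboundaries. Once this structural description of $Y$ is established, the tameness conclusion is essentially formal within the CEPT framework.
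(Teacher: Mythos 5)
Your overall skeleton --- fppf-localize, reduce to a $G$-action induced along a homomorphism from a diagonalizable group acting on a standard Kummerian cover, then conclude by the tameness results of \cite{cept} --- is exactly the strategy of the paper's proof. But your pivotal second paragraph contains a genuine gap. The claim that, after fppf localization, $Y$ is trivialized by a single standard Kummerian torsor and that the descent datum is \og litt\'eralement \fg{} encoded by a homomorphism $\phi:\mu\rightarrow G$ is not a routine cocycle computation: it is precisely the content of Kato's computation of $R^1\varepsilon_*G$ (th\'eor\`eme \ref{epsilonkato}, i.e. \cite[Theorem 4.1]{kato2}), whose proof is the nontrivial input here. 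Your proposed disentangling via the corollaire \ref{vidal} does not work as stated: a covering in the generated topology is refined by towers alternating fppf covers and standard Kummerian torsors, and an fppf cover of a Kummerian cover of $X$ cannot in general be pushed down to an fppf cover of the base. Moreover, even granting a single trivializing Kummerian cover $X'\rightarrow X$ under $\mu$, the identification of the \v{C}ech set $H^1(X'/X,G)$ with $\homr(\mu,G)$ modulo coboundaries is an unproved Hochschild-type computation: cocycles live in $G(\mu\times X')$ and nothing forces them to be constant in the $X'$-direction. The paper sidesteps all of this by quoting the map $b_n(h,a)=H^1(h)(\delta(a))$ from the proof of the th\'eor\`eme \ref{epsilonkato}: the exact sequence \eqref{spec1} shows that, fppf-locally, every log flat $G$-torsor agrees with some $b_n(h,a)$ up to an fppf torsor, which dies after further fppf localization; this yields exactly your contracted-product description of $Y$ (induced from the $\mu_n$-torsor $\delta(a)$ along $h$) by citation rather than re-derivation. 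So the fix is available inside the paper, but as written your key step restates what must be proved.

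Two further points the paper checks and you skip. First, tameness in the sense of \cite[Definition 7.1]{cept} is a property of the action together with its quotient, and the fppf-locality you invoke is \cite[Proposition 2.24]{cept}, which applies to finite morphisms: you therefore need the th\'eor\`eme \ref{representable} (finiteness of $Y\rightarrow X$), and also the verification --- done in the paper via the observation that the forgetful functor from log schemes to schemes admits a right adjoint, hence preserves cokernels, $G$ carrying the trivial log structure --- that the scheme underlying $X$ really is the quotient of the scheme underlying $Y$ by $G$ in the category of schemes; this is where the log-geometric setup actually enters and it is not automatic. Second, the direction you need at the end is \cite[Lemma 2.5]{cept} (an action induced from a finite diagonalizable group is tame); \cite[Theorem 6.4]{cept} is the converse structure theorem and does not by itself give tameness. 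Lemma 2.5 is moreover stated for a closed immersion $\mu_n\rightarrow G$, so for a general homomorphism $h$ one must factor through $\mu_q=\mu_n/\ker(h)$ and invoke \cite[expos\'e VIII, cor. 5.7]{gro3t2} to see that the induced monomorphism is a closed immersion, as the paper does.
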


\begin{proof}
On peut supposer que $X$ est affine. Comme $Y\rightarrow X$ est un torseur, $X$ est le quotient de $Y$ par l'action de $G$. Par d\'efinition, ce quotient est le conoyau, dans la cat\'egorie des log sch\'emas, des fl\`eches $m, pr_1:Y\times_S G\rightarrow Y$, o\`u $m$ est l'action de $G$ et $pr_1$ la premi\`ere projection. D'autre part, le foncteur d'oubli des log structures admet un adjoint \`a droite, donc pr\'eserve les conoyaux. Soit $Y^{\circ}$ le sch\'ema sous-jacent \`a $Y$. Comme $G$ est muni de la log structure triviale, le sch\'ema sous-jacent \`a $Y\times_S G$ est $Y^{\circ}\times_S G$. On en d\'eduit que le sch\'ema sous-jacent \`a $X$ est bien le quotient, dans la cat\'egorie des sch\'emas, du sch\'ema sous-jacent \`a $Y$ par l'action de $G$.

D'autre part, le th\'eor\`eme \ref{representable} nous dit que $Y\rightarrow X$ est un morphisme fini. Dans ce cadre, le fait que l'action de $G$ sur $Y$ soit mod\'er\'ee est une notion locale pour la topologie fppf sur $X$ d'apr\`es \cite[Proposition 2.24]{cept}.

Soit $n\geq 1$ un entier naturel. Nous avons une application
$$
b_n:\homr(\mu_n,G)\times M_X^{\rm gp}(X)\longrightarrow H^1_{\kpl}(X,G)
$$
introduite par Kato dans sa d\'emonstration de \cite[Theorem 4.1]{kato2} (voir aussi la preuve de Nizio\l{} \cite[Theorem 3.12]{niziol}), et que nous allons d\'ecrire. Soit $h:\mu_n\rightarrow G$ un morphisme de $X$-sch\'emas en groupes. Nous avons deux morphismes
$$
\begin{CD}
M_X^{\rm gp}(X) @>\delta>> H^1_{\kpl}(X,\mu_n) @>H^1(h)>> H^1_{\kpl}(X,G) \\
\end{CD}
$$
o\`u $\delta$ est le cobord \eqref{cobord1} de la suite de Kummer pour $\gmlog$ et $H^1(h)$ est le morphisme induit par $h$. Soit d'autre part $a$ une section globale de $M_X^{\rm gp}$. Alors on pose, par d\'efinition
$$
b_n(h,a):=H^1(h)(\delta(a)).
$$
D'apr\`es le th\'eor\`eme \ref{epsilonkato} (ou plut\^ot, d'apr\`es sa preuve), localement pour la topologie fppf sur $X$, il existe un entier $n$ tel que le torseur $Y\rightarrow X$ soit dans l'image de l'application $b_n$. On se ram\`ene ainsi au cas o\`u l'action de $G$ sur $Y$ provient d'une action de $\mu_n$. Or toute action induite par une action d'un groupe fini diagonalisable est mod\'er\'ee d'apr\`es \cite[Lemma 2.5]{cept} (notons que les auteurs se limitent au cas o\`u $h:\mu_n\rightarrow G$ est une immersion ferm\'ee ; on peut toujours s'y ramener en quotientant $\mu_n$ par le noyau de $h$, le r\'esultat \'etant un groupe de la forme $\mu_q$ pour un certain $q$ divisant $n$. Le monomorphisme obtenu $\overline{h}:\mu_q\rightarrow G$ est une immersion ferm\'ee en vertu de \cite[expos\'e VIII, cor. 5.7]{gro3t2}). On en d\'eduit le r\'esultat.
\end{proof}

\begin{rmq}
Le th\'eor\`eme \ref{CEPTtame} permet de traduire certains r\'esultats de \cite{cept} en termes de log sch\'emas. En particulier notre corollaire \ref{groupeunipotent} est \`a mettre en parall\`ele avec la Proposition 6.2 de \cite{cept}, selon laquelle toute action mod\'er\'ee d'un groupe \`a fibres unipotentes d\'efinit un torseur fppf.
\end{rmq}

\begin{cor}
\label{CHtame}
Soit $X$ un log sch\'ema localement n\oe{}th\'erien. On suppose que $X$ admet, localement pour la topologie \'etale sur $X$, une carte model\'ee sur un mono\"ide de la forme $\mathbb{N}^r$, et que l'ouvert de trivialit\'e est dense dans $X$. Soit $G$ un $X$-sch\'ema en groupes commutatif fini localement libre, et soit $Y\rightarrow X$ un $G$-torseur pour la topologie log plate. Alors
\begin{enumerate}
\item[$(1)$] Si $X$ est affine, l'action de $G$ sur le sch\'ema sous-jacent \`a $Y$ est CH-mod\'er\'ee.
\item[$(2)$] L'alg\`ebre sous-jacente \`a $Y$ est localement libre de rang un en tant que module sur l'alg\`ebre de $G^D$.
\end{enumerate}
\end{cor}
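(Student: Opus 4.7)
Mon plan est de combiner la proposition \ref{loclibre}, le th\'eor\`eme \ref{CEPTtame}, et la comparaison entre les deux notions de mod\'eration pr\'esentes dans \cite{cept}. D'abord je v\'erifierais que les hypoth\`eses de la proposition \ref{loclibre} sont satisfaites : l'existence \'etale locale d'une carte model\'ee sur $\mathbb{N}^r$ est pr\'ecis\'ement la condition requise, donc le sch\'ema sous-jacent $Y^{\circ}$ est fini localement libre sur $X$. Pour d\'eterminer son rang, notons $U$ l'ouvert de trivialit\'e de la log structure, dense dans $X$ par hypoth\`ese. Sur $U$ la restriction $Y|_U \to U$ est un $G$-torseur pour la topologie fppf classique (les cohomologies Kummer log plate et fppf co\"incidant pour un $G$ fini et plat lorsque la log structure est triviale), donc de rang \'egal \`a celui de $G$. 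Le rang d'un morphisme fini localement libre \'etant localement constant et $U$ \'etant dense, il \'egale celui de $G$ partout sur $X$.

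Pour le point $(1)$, on suppose $X$ affine. Alors $Y^{\circ}$ est affine (\'etant fini sur $X$), et son alg\`ebre est localement libre du m\^eme rang que celle de $G$ sur l'alg\`ebre de $X$, d'apr\`es ce qui pr\'ec\`ede. Le th\'eor\`eme \ref{CEPTtame} fournit la mod\'eration de l'action de $G$ sur $Y^{\circ}$ au sens de \cite[Definition 7.1]{cept}. Sous la condition de rang que l'on vient de v\'erifier, la mod\'eration g\'en\'erale co\"incide avec la CH-mod\'eration : ceci r\'esulte d'une comparaison directe des d\'efinitions dans \cite{cept}, la CH-mod\'eration \'etant pr\'ecis\'ement la sp\'ecialisation de la notion g\'en\'erale au cas o\`u l'alg\`ebre de $Y$ est localement libre du rang attendu. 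On en d\'eduit que l'action est CH-mod\'er\'ee.

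Pour le point $(2)$, l'\'enonc\'e est local sur $X$ pour la topologie de Zariski (la libert\'e locale de rang un et la structure de module qui entrent en jeu \'etant de nature locale), on peut donc se ramener au cas o\`u $X$ est affine. Le point $(1)$ fournit alors la CH-mod\'eration de l'action de $G$ sur $Y^{\circ}$, laquelle signifie par d\'efinition (au sens de Childs et Hurley, traduit de fa\c{c}on g\'eom\'etrique dans \cite[2.\textit{f}]{cept}) que $\mathcal{O}_Y$ est localement libre de rang un comme module sur l'alg\`ebre de $G^D$ --- cette derni\`ere n'\'etant autre que le dual lin\'eaire de l'alg\`ebre de Hopf de $G$ via la dualit\'e de Cartier. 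D'o\`u le point $(2)$.

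L'obstacle principal r\'eside dans l'identification entre la notion CEPT de mod\'eration (de saveur g\'eom\'etrique, mettant en jeu des rev\^etements canoniques et des champs quotients) et la condition de Childs-Hurley portant sur l'ordre associ\'e (de nature alg\'ebrique, exprim\'ee en termes de modules de Hopf). Une fois v\'erifi\'ee l'\'egalit\'e des rangs, cette comparaison est essentiellement tautologique au vu des d\'efinitions de \cite{cept}, mais c'est elle qui sert de pont entre l'\'enonc\'e g\'eom\'etrique du th\'eor\`eme \ref{CEPTtame} et la conclusion explicite de module galoisien de la pr\'esente corollaire.
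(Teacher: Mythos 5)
Your skeleton coincides with the paper's own proof: proposition \ref{loclibre} (whose hypothesis is exactly the \'etale-local $\mathbb{N}^r$-chart condition) gives that $Y^{\circ}\rightarrow X$ is finite locally free; the density of the trivializing open $U$, together with the fact that $Y|_U\rightarrow U$ is an fppf torsor (the log structure being trivial there, so that Kummer log flat and fppf cohomology agree), pins the rank down to that of $G$; th\'eor\`eme \ref{CEPTtame} gives tameness in the sense of \cite[Definition 7.1]{cept}; and point $(2)$ is reduced to the affine case by Zariski locality, exactly as in the paper. Up to there everything is correct.

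The gap is in how you close, namely in the two steps you declare \og essentiellement tautologiques \fg{}. CH-mod\'eration is \emph{not} defined as \og mod\'eration CEPT plus libert\'e locale du bon rang \fg{}: in \cite[2.\textit{f}]{cept}, following \cite{ch}, it is a Hopf-algebraic integral condition (existence of an element of trace one with respect to the integral of the Hopf algebra, the analogue of the classical surjective-trace criterion for tame ramification), imposed on algebras that are locally free of the same rank as $G$. The implication \og action mod\'er\'ee $+$ alg\`ebre localement libre de m\^eme rang que $G$ $\Rightarrow$ action CH-mod\'er\'ee \fg{} is \cite[Theorem 2.6]{cept}, a genuine theorem, and it is precisely what the paper invokes at this point; it does not follow from a direct comparison of definitions. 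Likewise in $(2)$: that a CH-tame algebra is locally free of rank one over the algebra of $G^D$ is not the definition of CH-mod\'eration but the local-normal-basis theorem of Childs and Hurley, used in the paper in the form \cite[Proposition 2.7]{cept} (which yields projectivity, hence local freeness of rank one, over $\mathcal{O}_{G^D}$). Since both missing citations exist and say exactly what you need, your argument is repairable essentially verbatim; but as written, the step you yourself single out as \og l'obstacle principal \fg{} is asserted rather than justified, and the assertions \og sp\'ecialisation de la notion g\'en\'erale \fg{} and \og signifie par d\'efinition \fg{} are false of the actual definitions in \cite{cept} and \cite{ch}.
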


\begin{proof}
$(1)$ D'apr\`es le th\'eor\`eme \ref{CEPTtame}, l'action de $G$ sur le sch\'ema sous-jacent \`a $Y$ est mod\'er\'ee, de quotient $Y\rightarrow X$. D'apr\`es la proposition \ref{loclibre}, le morphisme $Y\rightarrow X$ est fini localement libre, de m\^eme rang que $G$ (en effet, sa restriction \`a l'ouvert dense de trivialit\'e de $X$ est un $G$-torseur fppf, donc est de m\^eme rang que $G$). Au vu de tout cela et par application de \cite[Theorem 2.6]{cept}, l'action de $G$ sur $Y$ est CH-mod\'er\'ee. $(2)$ La question \'etant locale pour la topologie de Zariski sur $X$, on peut supposer que $X$ est affine. L'action sous-jacente au torseur est alors CH-mod\'er\'ee d'apr\`es ce qui pr\'ec\`ede. Par cons\'equent, en vertu de \cite[Proposition 2.7]{cept}, l'alg\`ebre sous-jacente \`a $Y$ est  projective, et m\^eme localement libre de rang un, sur l'alg\`ebre de $G^D$.
\end{proof}

%%%%%%%%%%%%%%%%%%%%%%%%%%%%%%%%%%%%%%%%%%%%%

%%%%%%%%%%%%%%%%%%%%%%%%%%%%%%%%%%%%%%%%%%%%%

\section{Structure galoisienne des torseurs}
\label{quatre}

%%%%%%%%%%%%%%%%%%%%%%%%%%%%%%%%%%%%%%%%%%%%%

Dor\'enavant, nous fixons un log sch\'ema fin et satur\'e $X$, dont le sch\'ema sous-jacent est localement n\oe{}th\'erien.

\subsection{Nullit\'e d'un faisceau Ext local}

Tout d'abord, nous \'enon\c{c}ons le th\'eor\`eme suivant, qui g\'en\'eralise un r\'esultat bien connu en topologie fppf.

\begin{thm}
\label{splitext}
Soit $G$ un $X$-sch\'ema en groupes commutatif, fini localement libre. Alors le faisceau $\Ext^1_{\text{\rm kpl}}(G,\gm)$ est nul.
\end{thm}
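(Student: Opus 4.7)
Il s'agit de montrer que toute extension $0\to\gm\to E\to G\to 0$ pour la topologie Kummer log plate, au-dessus d'un objet $U$ de $(fs/X)$, devient triviale apr\`es passage \`a un recouvrement kpl convenable de $U$. La strat\'egie consiste \`a se ramener au r\'esultat fppf classique $\Ext^1_{\fppf}(G,\gm)=0$ (SGA 7, expos\'e VIII, th\'eor\`eme 3.3.1) gr\^ace au morphisme de sites $\varepsilon:(fs/X)_{\kpl}\to(Sch/X)_{\fppf}$.

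La premi\`ere \'etape consiste \`a appliquer $R\varepsilon_*$ \`a l'extension. Comme $\gm$ et $G$ proviennent de faisceaux fppf, on a $\varepsilon_*\gm=\gm$ et $\varepsilon_*G=G$, d'o\`u une suite exacte de faisceaux fppf sur le sch\'ema sous-jacent $U^{\circ}$
$$
0\longrightarrow \gm\longrightarrow \varepsilon_*E\longrightarrow G\xrightarrow{\partial} R^1\varepsilon_*\gm,
$$
dont le d\'efaut d'exactitude est enti\`erement mesur\'e par le cobord $\partial$. Le th\'eor\`eme \ref{epsilonkato} identifie la cible \`a $\mathbb{Q}/\mathbb{Z}\otimes\overline{M}_U^{\rm gp}$, et comme $G$ est localement annul\'e par un entier $n$, l'image de $\partial$ est contenue dans $\tfrac{1}{n}\mathbb{Z}/\mathbb{Z}\otimes\overline{M}_U^{\rm gp}$.

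Vient alors l'\'etape d\'ecisive : on consid\`ere le torseur kumm\'erien standard $V\to U$ (au sens du paragraphe \ref{pardeuxtrois}) associ\'e \`a la multiplication par $n$ sur une carte locale de $U$. Sur $V$, toute section $m$ de $\overline{M}^{\rm gp}_U$ s'\'ecrit comme $n$-i\`eme puissance d'une nouvelle section, de sorte que l'image de $\tfrac{1}{n}\otimes m$ dans $\mathbb{Q}/\mathbb{Z}\otimes\overline{M}_V^{\rm gp}$ est nulle. On obtient ainsi $\partial|_V=0$, et la suite $0\to\gm\to\varepsilon_*E|_V\to G\to 0$ devient une v\'eritable extension fppf sur $V^{\circ}$.

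Pour conclure, on observe que $\varepsilon^*$ \'etant exact, le pullback par $\varepsilon^*$ de la suite fppf ci-dessus fournit une extension kpl $0\to\gm\to\varepsilon^*\varepsilon_*E|_V\to G\to 0$. La co-unit\'e $\varepsilon^*\varepsilon_*E|_V\to E|_V$ est alors un morphisme d'extensions kpl dont les fl\`eches externes sont les identit\'es (puisque $\gm$ et $G$ proviennent de la topologie fppf) ; le lemme des cinq fournit un isomorphisme. Un recouvrement fppf $W\to V$ trivialisant l'extension fppf $\varepsilon_*E|_V$, qui existe par le r\'esultat classique $\Ext^1_{\fppf}(G,\gm)=0$, est a fortiori un recouvrement kpl ; l'isomorphisme $E|_W\simeq\varepsilon^*\varepsilon_*E|_W$ donne alors la trivialit\'e cherch\'ee. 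L'obstacle principal est l'\'etape d\'ecisive d\'ecrite plus haut : il s'agit de reconna\^\i tre que la contribution sp\'ecifiquement logarithmique \`a $\Ext^1_{\kpl}(G,\gm)$, moralement le terme $\Hom(G,R^1\varepsilon_*\gm)$, est sheafiquement tu\'ee par les torseurs kumm\'eriens --- ce qui refl\`ete l'existence de racines $n$-i\`emes de $\overline{M}^{\rm gp}$ dans la topologie kpl.
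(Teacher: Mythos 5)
Your proof is correct, and it takes a genuinely different route from the paper's. The paper lifts the given extension, via the Kummer sequence, to an extension $F$ of $G$ by $\mu_n$, invokes Kato's representability theorem (th\'eor\`eme \ref{representable}) to realize $F$ as a log scheme which is a $\mu_n$-torsor over $G$, and then applies Kato's Proposition 2.7: after base change by a standard kummerian torsor $Y\rightarrow X$ built from a chart of $X$ (which is simultaneously a chart of $G$), the morphism $F\rightarrow G$ becomes strict, so the extension descends to an fppf extension, which splits fppf-locally by the classical theorem. You never represent anything: you isolate the obstruction to the kpl extension being fppf as the connecting map $\partial:G\rightarrow R^1\varepsilon_*\gm$, use th\'eor\`eme \ref{epsilonkato} to see that this obstruction is $n$-torsion in $\mathbb{Q}/\mathbb{Z}\otimes\overline{M}_X^{\rm gp}$ (torsion-freeness of $\overline{M}_X^{\rm gp}$, which follows from saturation and sharpness, justifies rewriting the $n$-torsion as $\frac{1}{n}\mathbb{Z}/\mathbb{Z}\otimes\overline{M}_X^{\rm gp}$), kill it by the standard kummerian torsor extracting $n$-th roots of the log structure, and recover $E|_V$ as $\varepsilon^*$ of an honest fppf extension via the counit and the five lemma. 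Both arguments share the same two external inputs --- a standard Kummer cover pulled from a chart, and the fppf vanishing $\ext^1_{\fppf}(G,\gm)=0$ --- but you trade the representability theorem and the strictness proposition for the computation \ref{epsilonkato} and topos formalities (exactness of $\varepsilon^*$, triangle identities); what this buys is a transparent description of the purely logarithmic part of the obstruction and of the precise cover that kills it. Three points you gloss over, all routine but worth stating: a chart is needed to construct $V$, so you must first localize \'etale on $U$ (legitimate, since vanishing of the $\Ext$ sheaf is a local question); the identity $\partial_V=(\text{base change})\circ\partial_U|_V$ requires the functoriality of Kato's isomorphism \ref{epsilonkato} under the morphism $V\rightarrow U$, which holds because that isomorphism is built from the Kummer coboundary for $\gmlog$, compatible with base change; and th\'eor\`eme \ref{epsilonkato} is stated over a locally noetherian base, so strictly speaking you prove local splitting over locally noetherian objects of $(fs/X)$ --- but the paper's own proof makes the same implicit restriction, since th\'eor\`eme \ref{representable} carries the identical hypothesis.
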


\begin{proof}
Il s'agit de montrer que toute extension (dans la cat\'egorie des faisceaux ab\'eliens sur $(fs/X)_{\kpl}$) de $G$ par $\gm$ admet une section localement pour la topologie Kummer plate sur $X$. Soit $\Omega$ une telle extension. Soit $n$ l'ordre de $G$, on sait que $G$ est tu\'e par $n$. En se servant de la suite exacte de Kummer donn\'ee par la multiplication par $n$ sur $\gm$, on en d\'eduit que le morphisme naturel
$$
\ext^1_{\kpl}(G,\mu_n)\longrightarrow\ext^1_{\kpl}(G,\gm)
$$
est surjectif. On fixe \`a pr\'esent une extension de $G$ par $\mu_n$ qui est un ant\'ec\'edent de $\Omega$ par le morphisme ci-dessus. Cette extension est donn\'ee par une suite exacte
$$
\begin{CD}
0 @>>> \mu_n @>>> F @>>> G @>>> 0\\
\end{CD}
$$
Par un argument cassique (valable dans n'importe quel topos), $F$ est un $\mu_n$-torseur sur $G$. D'apr\`es le th\'eor\`eme \ref{representable}, $X$ et $G$ \'etant localement n\oe{}th\'eriens, $F$ est donc repr\'esentable par un log sch\'ema fs, et le morphisme $F\rightarrow G$ est log plat et kumm\'erien.

Quitte \`a travailler localement pour la topologie \'etale sur $X$, on peut supposer que $X$ est affine, et admet une carte globale $P\rightarrow M_X$. Comme la log structure de $G$ est l'image r\'eciproque de celle de $X$, on obtient ainsi une carte globale $P\rightarrow M_G$ pour $G$, model\'ee sur le m\^eme mono\"ide $P$.

D'apr\`es Kato (voir \cite[Proposition 2.7]{kato2}), le sch\'ema $G$ \'etant quasi-compact (car affine), il existe un mono\"ide fs $Q$ et un morphisme kumm\'erien $u:P\rightarrow Q$ tel que, si
$$
T:=G\times_{\Spec(\mathbb{Z}[P])} \Spec(\mathbb{Z}[Q]) \longrightarrow G
$$
est le torseur kumm\'erien standard (de base $G$) associ\'e \`a $u$, le morphisme
$$
F\times_G T\longrightarrow T
$$
d\'eduit du morphisme $F\rightarrow G$ par changement de base $T\rightarrow G$, est strict. Celui-ci se r\'ecrit aussit\^ot
$$
F\times_{\Spec(\mathbb{Z}[P])} \Spec(\mathbb{Z}[Q])\longrightarrow G\times_{\Spec(\mathbb{Z}[P])} \Spec(\mathbb{Z}[Q])
$$
c'est-\`a-dire
$$
F\times_X Y\longrightarrow G\times_X Y
$$
o\`u $Y:=X\times_{\Spec(\mathbb{Z}[P])} \Spec(\mathbb{Z}[Q])\rightarrow X$ est le torseur kumm\'erien standard (de base $X$) associ\'e \`a $u$. On obtient ainsi, en infligeant le changement de base $Y\rightarrow X$ \`a la suite exacte d\'efinissant $F$, une suite exacte de $Y$-log sch\'emas dont les log structures sont obtenues par image r\'eciproque de la log structure de $Y$. Autrement dit, apr\`es changement de base par un \'epimorphisme Kummer log plat, notre extension $F$ de $G$ par $\mu_n$ pour la topologie log plate provient d'une extension de $G$ par $\mu_n$ pour la topologie fppf. Mais alors, apr\`es application du m\^eme changement de base, notre extension $\Omega$ de d\'epart provient d'une extension de $G$ par $\gm$ pour la topologie fppf. Or il est bien connu (voir \cite[expos\'e VIII, 3.3.1]{gro7} ou \cite[Theorem 1]{w}) qu'une telle extension est localement scind\'ee pour la topologie fppf, d'o\`u le r\'esultat.
\end{proof}

On en d\'eduit le r\'esultat suivant.

\begin{cor}
\label{isocano}
Soit $G$ un $X$-sch\'ema en groupes commutatif, fini localement libre. On dispose d'un isomorphisme
$$
\begin{CD}
H^1_{\kpl}(X,G^D) @>\sim>> \ext^1_{\kpl}(G,\gm) \\
\end{CD}
$$
canonique, et fonctoriel en $G$.
\end{cor}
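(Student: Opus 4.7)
Le plan est d'invoquer la suite spectrale locale-globale des $\ext$ dans le topos associ\'e au site $(fs/X)_{\kpl}$,
$$
H^p_{\kpl}(X, \Ext^q_{\kpl}(G, \gm)) \Longrightarrow \ext^{p+q}_{\kpl}(G, \gm),
$$
puis de combiner celle-ci avec les deux ingr\'edients naturellement disponibles : d'une part la dualit\'e de Cartier, d'autre part la nullit\'e fournie par le th\'eor\`eme \ref{splitext}.

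Premi\`erement, par dualit\'e de Cartier (rappel\'ee au d\'ebut de la section \ref{trois}), le faisceau $\Hom_{\kpl}(G, \gm)$ est repr\'esentable par le dual $G^D$, de sorte que le terme $E_2^{1,0}$ de la suite spectrale ci-dessus s'identifie canoniquement \`a $H^1_{\kpl}(X, G^D)$. Deuxi\`emement, le th\'eor\`eme \ref{splitext} fournit la nullit\'e du faisceau $\Ext^1_{\kpl}(G, \gm)$, ce qui annule le terme $E_2^{0,1}$.

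La suite exacte \`a cinq termes associ\'ee \`a la suite spectrale fournit alors l'isomorphisme
$$
H^1_{\kpl}(X, G^D) \simeq \ext^1_{\kpl}(G, \gm),
$$
puisque le terme $H^0(X, \Ext^1_{\kpl}(G, \gm))$ qui le suit est nul. La fonctorialit\'e en $G$ est automatique, la suite spectrale, la dualit\'e de Cartier, et le th\'eor\`eme \ref{splitext} \'etant tous trois naturels en $G$. Tout le travail conceptuel \'etant concentr\'e dans la d\'emonstration du th\'eor\`eme \ref{splitext} qui pr\'ec\`ede, ce corollaire se r\'eduit \`a une v\'erification purement formelle et ne pr\'esente aucun obstacle additionnel.
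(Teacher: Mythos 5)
Votre d\'emonstration est correcte et suit exactement la m\^eme voie que celle du papier : suite spectrale locale-globale des Ext dans le topos Kummer log plat, identification $\Hom_{\kpl}(G,\gm)=G^D$ par dualit\'e de Cartier, et annulation du terme $H^0(X,\Ext^1_{\kpl}(G,\gm))$ via le th\'eor\`eme \ref{splitext}, la suite exacte \`a cinq termes \'etant pr\'ecis\'ement la suite de bas degr\'e qu'utilise le papier. Rien \`a redire.
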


\begin{proof}
On dispose d'une suite spectrale locale-globale pour les faisceaux Ext en topologie Kummer plate (comme dans n'importe quel topos annel\'e, cf. \cite[expos\'e V, proposition 6.1, 3)]{gro4t2}), laquelle donne naissance \`a une suite exacte
$$
0 \longrightarrow H^1_{\kpl}(X,\Hom(G,\gm)) \longrightarrow \ext^1_{\kpl}(G,\gm) \longrightarrow H^0(X,\Ext^1_{\kpl}(G,\gm))
$$
avec $\Hom(G,\gm)=G^D$. Le membre de droite \'etant nul d'apr\`es le th\'eor\`eme \ref{splitext}, on en d\'eduit le r\'esultat.
\end{proof}

%%%%%%%%%%%%%%%%%%%%%%%%%%%%%%%%%%%%%%%%%%%%%

\subsection{Structure galoisienne logarithmique}

Soit $G$ un $X$-sch\'ema en groupes commutatif, fini localement libre. Rappelons que l'on doit \`a Waterhouse (voir \cite[Theorem 5]{w}) la d\'efinition d'un morphisme, que nous appellerons morphisme de classes
$$
\begin{CD}
\pi:H^1_{\fppf}(X,G^D) @>>> \pic(G) \\
\end{CD}
$$
et qui mesure la structure galoisienne des $G^D$-torseurs pour la topologie fppf.

De fa\c{c}on purement formelle, il est facile de d\'efinir un homomorphisme semblable \`a celui de Waterhouse mesurant la structure galoisienne des $G^D$-torseurs pour la topologie Kummer log plate.

\begin{dfn}
\label{loghomo}
On appelle morphisme de log classes, et on note $\pi^{\rm log}$, le compos\'e des morphismes
$$
\begin{CD}
H^1_{\kpl}(X,G^D) @>\sim>> \ext^1_{\kpl}(G,\gm) @>>> H^1_{\kpl}(G,\gm) \\
\end{CD}
$$
dans lequel la premi\`ere fl\`eche est l'isomorphisme du corollaire \ref{isocano}, et la seconde est le morphisme naturel.
\end{dfn}

Rappelons tout d'abord un fait \'el\'ementaire. Si $\topo$ est une topologie de Grothendieck dans la cat\'egorie $(fs/X)$, alors nous avons une suite exacte
$$
\begin{CD}
0\longrightarrow \ext^1_{\mathcal{P},\topo}(G,\gm) @>>> \ext^1_{\topo}(G,\gm) @>>> H^1_{\topo}(G,\gm) \\
\end{CD}
$$
o\`u  $\ext^1_{\mathcal{P},\topo}$ d\'esigne le groupe des extensions calcul\'e dans la cat\'egorie des pr\'efaisceaux ab\'eliens sur $(fs/X)_{\topo}$. Le lemme ci-dessous est \'evident (la cat\'egorie des pr\'efaisceaux sur un site ne d\'ependant que de la cat\'egorie sous-jacente au site).

\begin{lem}
\label{extofpresheaves}
Le morphisme naturel
$$
\ext^1_{\mathcal{P},\fppf}(G,\gm)\longrightarrow \ext^1_{\mathcal{P},\kpl}(G,\gm)
$$
est un isomorphisme.
\end{lem}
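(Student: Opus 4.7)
La strat\'egie est la suivante : on va observer que les deux topologies en jeu, $\fppf$ et $\kpl$, sont des topologies de Grothendieck d\'efinies sur la \emph{m\^eme} cat\'egorie sous-jacente, \`a savoir $(fs/X)$ (la topologie $\fppf$ \'etant ici celle induite, sur les log sch\'emas fs, par la topologie $\fppf$ classique sur les sch\'emas). La cat\'egorie des pr\'efaisceaux de groupes ab\'eliens sur un site $(\mathcal{C},\topo)$ co\"incide par d\'efinition avec la cat\'egorie des foncteurs $\mathcal{C}^{op}\rightarrow \mathbf{Ab}$, laquelle ne d\'epend que de $\mathcal{C}$ et pas de la topologie $\topo$. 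Il en r\'esulte que l'on a \'egalit\'e (et pas seulement isomorphisme) entre les cat\'egories ab\'eliennes
\[
\mathbf{PSh}((fs/X)_{\fppf},\mathbf{Ab}) = \mathbf{PSh}((fs/X)_{\kpl},\mathbf{Ab}).
\]

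La premi\`ere \'etape consiste \`a v\'erifier que, dans cette identification, les pr\'efaisceaux repr\'esent\'es par $G$ et $\gm$ correspondent \`a eux-m\^emes (ce qui est tautologique, puisque la cat\'egorie sous-jacente est la m\^eme). La deuxi\`eme \'etape consiste \`a rappeler que les groupes $\ext^1_{\mathcal{P},\topo}$ sont d\'efinis comme des Ext dans la cat\'egorie des pr\'efaisceaux ab\'eliens, donc calcul\'es de mani\`ere intrins\`eque dans l'une ou l'autre des cat\'egories identifi\'ees ci-dessus. On en d\'eduit imm\'ediatement un isomorphisme canonique entre les deux groupes d'extensions.

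Enfin, il reste \`a identifier la fl\`eche de l'\'enonc\'e avec cet isomorphisme canonique. C'est la partie la moins obstructive : le morphisme naturel $\ext^1_{\mathcal{P},\fppf}(G,\gm)\rightarrow \ext^1_{\mathcal{P},\kpl}(G,\gm)$ est, par construction, celui induit par le fait que toute extension de pr\'efaisceaux sur $(fs/X)_{\fppf}$ donne automatiquement une extension de pr\'efaisceaux sur $(fs/X)_{\kpl}$, via l'identit\'e de la cat\'egorie sous-jacente. C'est donc pr\'ecis\'ement l'isomorphisme construit ci-dessus.
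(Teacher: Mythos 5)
Your proof is correct and is exactly the paper's argument: the author declares the lemma evident precisely because the category of presheaves on a site depends only on the underlying category, which here is $(fs/X)$ for both topologies, so the presheaf Ext groups coincide and the natural map is the canonical identification. Nothing further is needed.
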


En particulier, les noyaux de $\pi^{\rm log}$ et de $\pi$ sont isomorphes. C'est l'objet de la proposition suivante.

\begin{prop}
\label{kerpilog}
On dispose d'un diagramme commutatif
$$
\begin{CD}
H^1_{\fppf}(X,G^D) @>\pi>> \pic(G) \\
@VVV @VVV \\
H^1_{\kpl}(X,G^D) @>\pi^{\rm log}>> H^1_{\kpl}(G,\gm) \\
\end{CD}
$$
dans lequel les fl\`eches verticales sont injectives. En outre, le morphisme induit sur les noyaux
$$
\ker(\pi)\longrightarrow \ker(\pi^{\rm log})
$$
est un isomorphisme. Enfin, \'etant donn\'e un $G^D$-torseur log plat, celui-ci est un $G^D$-torseur fppf si et seulement si son image par $\pi^{\rm log}$ est un $\gm$-torseur fppf.
\end{prop}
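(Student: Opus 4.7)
The plan is to prove the four claims of the proposition in sequence.

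\textbf{Commutativity and injectivity.} Waterhouse's morphism $\pi$ admits a cohomological description parallel to Definition \ref{loghomo}: by the classical vanishing of $\Ext^1_{\fppf}(G,\gm)$ and the local-to-global Ext spectral sequence, one obtains an isomorphism $H^1_{\fppf}(X,G^D)\cong \ext^1_{\fppf}(G,\gm)$ (an fppf analogue of Corollary \ref{isocano}), and $\pi$ is the composition of this isomorphism with the natural map $\ext^1_{\fppf}(G,\gm)\to H^1_{\fppf}(G,\gm)=\pic(G)$. Commutativity of the square then reduces to functoriality of all these objects with respect to the morphism of sites $\varepsilon$. Injectivity of the left vertical arrow is exactly the sequence \eqref{spec1} applied to $G^D$ on $X$, and injectivity of the right vertical arrow is the same sequence applied to $\gm$ on $G$.

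\textbf{Kernel isomorphism.} The left-exact sequence displayed just before Lemma \ref{extofpresheaves}, combined with the identifications $H^1_{\topo}(X,G^D)\cong\ext^1_{\topo}(G,\gm)$ in both topologies, identifies $\ker(\pi)$ with $\ext^1_{\mathcal{P},\fppf}(G,\gm)$ and $\ker(\pi^{\rm log})$ with $\ext^1_{\mathcal{P},\kpl}(G,\gm)$. The induced map between kernels is then tautologically the canonical map between these presheaf Ext groups, which is an isomorphism by Lemma \ref{extofpresheaves}.

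\textbf{The last equivalence.} The direction $(\Rightarrow)$ is immediate from the commutative diagram. For the converse, let $Y\in H^1_{\kpl}(X,G^D)$ correspond via Corollary \ref{isocano} to an extension $E\in\ext^1_{\kpl}(G,\gm)$; by definition $\pi^{\rm log}(Y)$ is the underlying $\gm$-torsor $E\to G$. If this torsor is fppf, then $E$ is representable by a scheme (since $H^1_{\fppf}(-,\gm)=\pic$ classifies line bundles), and since $G$ carries the pullback log structure from $X$, the morphism $E\to X$ is strict, so fiber products of $E$ over $X$ in the fs category coincide with those in the scheme category. The unit, multiplication and inverse morphisms of the extension, a priori defined only as kpl morphisms between representable log schemes, must therefore be morphisms of schemes by subcanonicity of the kpl topology. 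This equips $E$ with a group scheme extension structure in the fppf topology, i.e.\ an element of $\ext^1_{\fppf}(G,\gm)\cong H^1_{\fppf}(X,G^D)$, whose image in $H^1_{\kpl}(X,G^D)$ must coincide with $Y$ by injectivity of the left vertical arrow.

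\textbf{Main obstacle.} The most delicate step is the final one, where one upgrades a kpl extension to an fppf extension from knowledge of the underlying torsor; it relies on the combination of representability of fppf $\gm$-torsors, subcanonicity of the kpl topology, and the compatibility of log and scheme fiber products along strict morphisms.
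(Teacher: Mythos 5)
Your proposal follows the paper's proof in essentially the same way: injectivity of both vertical arrows comes from the exact sequence \eqref{spec1}, the kernel isomorphism comes from Lemma \ref{extofpresheaves} via the left-exact presheaf-Ext sequence and the two identifications $H^1_{\topo}(X,G^D)\simeq\ext^1_{\topo}(G,\gm)$, and the final equivalence reduces to the observation that a log flat extension of $G$ par $\gm$ is an fppf extension if and only if its underlying $\gm$-torsor is fppf --- an observation the paper merely asserts and which your strictness/subcanonicity argument correctly fleshes out. One small repair: at the very end, the identification of the image of the constructed fppf class with $Y$ follows from compatibility of the two isomorphisms of Corollary \ref{isocano} type with the comparison map $\varepsilon^*$ (the fppf extension pulls back to $E$ by construction), not from injectivity of the left vertical arrow, which is neither needed nor sufficient for that step.
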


\begin{proof}
L'assertion d'injectivit\'e d\'ecoule de la suite exacte \eqref{spec1}. Par d\'efinition de $\pi$ et $\pi^{\rm log}$, l'assertion sur les noyaux d\'ecoule du lemme \ref{extofpresheaves} ci-dessus. Le dernier point se r\'eduit \`a l'observation suivante : \'etant donn\'e une extension de $G$ par $\gm$ pour la topologie log plate, cette derni\`ere est une extension pour la topologie fppf si et seulement si son $\gm$-torseur sous-jacent est un torseur fppf.
\end{proof}

%%%%%%%%%%%%%%%%%%%%%%%%%%%%%%%%%%%%%%%%%%%%%

\subsection{Cas du groupe $\mu_n$}

Rappelons que la multiplication par un entier $n\geq 1$ sur le groupe $\gm$ est surjective en topologie log plate. Ceci induit une suite exacte courte
\begin{equation}
\label{gmkummer}
\begin{CD}
0 @>>> \gm(X)/n @>d>> H^1_{\kpl}(X,\mu_n) @>\s>> H^1_{\kpl}(X,\gm)[n] @>>> 0 \\
\end{CD}
\end{equation}

Dans le cas de la topologie fppf, cette suite exacte est le point de d\'epart d'une description fort agr\'eable des $\mu_n$-torseurs en termes de $\gm$-torseurs munis d'une trivialisation de leur puissance $n$-i\`eme. Avec les notations du paragraphe \ref{torseurscycliques}, ceci s'exprime par un isomorphisme canonique $H^1_{\fppf}(X,\mu_n)\simeq \Rac(X,\emptyset,n)$.

On peut transposer cela en topologie log plate. Consid\'erons la cat\'egorie des couples $(L,\tau)$ o\`u $L$ est un $\gm$-torseur pour la topologie log plate sur $X$, et $\tau$ est une section de $L^{\vee n}$, le symbole $\vee$ d\'esignant le produit contract\'e de $\gm$-torseurs. Munie du produit auquel on pense, c'est une cat\'egorie de Picard strictement commutative. Nous noterons $\Raclog(X,n)$ le groupe des classes d'isomorphie de ses objets.

\'Etant donn\'e une section $\alpha\in\gm(X)$, on note $\tau_{\alpha}$ la section de $\gm^{\vee n}$ obtenue en composant l'isomorphisme canonique $\gm^{\vee n}\simeq \gm$ avec la multiplication par $\alpha$. Ainsi le couple $(\gm,\tau_{\alpha})$ est un \'el\'ement de $\Raclog(X,n)$.
Nous d\'efinissons une suite
\begin{equation}
\label{clogkummer}
\begin{CD}
0 @>>> \gm(X)/n @>i>> \Raclog(X,n) @>o>> H^1_{\kpl}(X,\gm)[n] @>>> 0 \\
\end{CD}
\end{equation}
o\`u $i$ est l'application $\alpha\mapsto(\gm,\tau_{\alpha})$, et $o$ est l'oubli $(L,\tau)\mapsto L$. On v\'erifie ais\'ement que $i$ et $o$ sont des morphismes, et que cette suite est exacte.

\begin{prop}
\label{muntorsdesc}
On dispose d'un isomorphisme canonique
$$
\begin{CD}
\omega:\Raclog(X,n) @>\sim>> H^1_{\kpl}(X,\mu_n) \\
\end{CD}
$$
qui s'inscrit dans un isomorphisme entre les suites exactes \eqref{gmkummer} et \eqref{clogkummer}, les fl\`eches sur les noyaux et les conoyaux \'etant les identit\'es.
\end{prop}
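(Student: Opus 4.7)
The guiding idea is that the Kummer sequence $0\to\mu_n\to\gm\xrightarrow{[n]}\gm\to 0$ identifies $\mu_n$ with the kernel of the $n$-th power map on $\gm$. Consequently, a $\mu_n$-torsor should be the same datum as a $\gm$-torsor $L$ together with a way of trivialising $L^{\vee n}$; this is exactly what an object of $\Raclog(X,n)$ encodes. The plan is to turn this heuristic into a pair of mutually inverse morphisms of Picard categories.

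First, I would construct $\omega$ directly. Given $(L,\tau)\in\RAClog(X,n)$, note that $L$ carries a well-defined ``raise to the $n$-th power'' morphism $[n]_L\colon L\to L^{\vee n}$, obtained by composing the $n$-fold diagonal $L\to L\times_X\cdots\times_X L$ with the $(n-1)$-fold contracted product along $\gm$. This map covers $[n]\colon\gm\to\gm$ in the sense that $[n]_L(g\cdot x)=g^n\cdot[n]_L(x)$. Define $\omega(L,\tau)$ as the fibre product
\[
\omega(L,\tau):=L\times_{L^{\vee n},\,\tau}X,
\]
i.e.\ the subfunctor of $L$ whose $n$-th power equals $\tau$. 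The action of $\ker([n])=\mu_n$ on $L$ preserves this subfunctor, making $\omega(L,\tau)$ a $\mu_n$-pseudo-torsor; it is covered by $L$ which is itself covered by a log flat Kummer cover of $X$, so it is a genuine $\mu_n$-torsor for the kpl topology.

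Second, I would write down an inverse $\omega^{-1}$: to a $\mu_n$-torsor $T$ associate the pair $(T\vee^{\mu_n}\gm,\tau_T)$, where the contracted product along the inclusion $\mu_n\hookrightarrow\gm$ is a $\gm$-torsor and $\tau_T$ is the canonical trivialisation of its $n$-th power, coming from the fact that $[n]\circ(\mu_n\hookrightarrow\gm)=0$. One then checks that the two constructions are quasi-inverse (both essentially amount to taking fibres of $[n]_L$), and that each preserves the symmetric monoidal structures, so $\omega$ is a morphism of Picard categories, hence of abelian groups.

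Third, I would verify compatibility with the two exact sequences. On one side, the composition $\omega\circ i$ sends $\alpha\in\gm(X)$ to $\omega(\gm,\tau_\alpha)$, which is by construction the pullback of $[n]\colon\gm\to\gm$ along $\alpha\colon X\to\gm$; this is precisely the Kummer coboundary $d(\alpha)$. On the other side, the forgetful morphism $o(L,\tau)=L$ matches, under $\omega$, the projection $\s$ induced by $\mu_n\to\gm$, because $\omega(L,\tau)\vee^{\mu_n}\gm\simeq L$. Thus we obtain a commutative diagram between \eqref{clogkummer} and \eqref{gmkummer} whose outer vertical arrows are identities; a trivial application of the five lemma then forces $\omega$ to be an isomorphism, and in particular shows that \eqref{clogkummer} is exact (if one wants to deduce this a posteriori from \eqref{gmkummer} rather than check it by hand).

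The only genuinely delicate step is the first one: one must make sure that the fibre product defining $\omega(L,\tau)$ is representable and really is a $\mu_n$-torsor for the Kummer log flat topology, which can be done by working locally on a Kummer log flat cover trivialising $L$, where the whole construction reduces to the classical fppf picture. Everything else is formal bookkeeping on Picard categories and the Kummer sequence.
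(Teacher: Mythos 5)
Your proof is correct, but it follows a genuinely different route from the paper's. You build the $\mu_n$-torsor directly as the fibre $L\times_{L^{\vee n},\,\tau}X$ of the $n$-th power map $[n]_L\colon L\to L^{\vee n}$ over the section $\tau$, and you exhibit an explicit quasi-inverse $T\mapsto (T\vee^{\mu_n}\gm,\tau_T)$; local triviality is indeed guaranteed because $[n]\colon\gm\to\gm$ is already surjective fppf-locally, hence kpl-locally. The paper proceeds instead through extensions: it sends $(L,\tau)$ to the log group scheme $\Omega(L,\tau)=\gm\sqcup L\sqcup L^{\vee 2}\sqcup\cdots\sqcup L^{\vee(n-1)}$, regarded as a class in $\ext^1_{\kpl}((\mathbb{Z}/n\mathbb{Z})_X,\gm)$, and transports it to $H^1_{\kpl}(X,\mu_n)$ via the isomorphism of corollaire \ref{isocano}, which itself rests on the vanishing theorem \ref{splitext}; it then verifies $\omega\circ i=d$ and $\s\circ\omega=o$ and concludes that $\omega$ is an isomorphism by the five lemma, the exactness of \eqref{clogkummer} having been checked just before the proposition. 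Your approach is more elementary and self-contained: it uses neither $\Ext$ sheaves nor corollaire \ref{isocano}, and producing a two-sided inverse makes the five lemma superfluous. What the paper's detour buys is the explicit identification of the \emph{extension} attached to a $\mu_n$-torsor, which is precisely what the proof of proposition \ref{commebx} consumes to compute $\pi^{\rm log}$ (it quotes that the image of $T$ under corollaire \ref{isocano} is an extension of the form $\Omega(L,\tau)$); with your construction one would need a short supplementary check relating $L\times_{L^{\vee n},\,\tau}X$ to $\Omega(L,\tau)$ before that later computation goes through. One logical nit: your final sentence is circular as stated, since the five lemma requires both rows to be exact, so it cannot simultaneously prove that $\omega$ is an isomorphism and that \eqref{clogkummer} is exact; but this is harmless here, because your second step already furnishes the inverse, and exactness of \eqref{clogkummer} can then legitimately be deduced a posteriori by transport along $\omega$.
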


\begin{proof}
Soit $(L,\tau)$ un \'el\'ement de $\Raclog(X,n)$. Alors $L$ est de $n$-torsion, donc provient d'un $\mu_n$-torseur en vertu de la suite \eqref{gmkummer}. Les $\mu_n$-torseurs \'etant repr\'esentables (th\'eor\`eme \ref{representable}), on en d\'eduit que $L$ est repr\'esentable. Consid\'erons le log sch\'ema en groupes $\Omega(L,\tau)$ sur $X$ d\'efini par
$$
\Omega(L,\tau)=\gm\sqcup L\sqcup L^{\vee 2}\sqcup \dots \sqcup L^{\vee (n-1)}
$$
le symbole $\sqcup$ d\'esignant la r\'eunion disjointe dans la cat\'egorie des log sch\'emas. La loi de multiplication sur $\Omega(L,\tau)$ est induite par le produit contract\'e et par la section $\tau$, laquelle fournit un isomorphisme $L^{\vee n}\simeq \gm$. De plus, on dispose d'une suite exacte
$$
\begin{CD}
0 @>>> \gm @>>> \Omega(L,\tau) @>>> (\mathbb{Z}/n\mathbb{Z})_X @>>> 0 \\
\end{CD}
$$
o\`u les morphismes sont d\'efinis de fa\c{c}on \'evidente, le log sch\'ema sous-jacent au groupe $(\mathbb{Z}/n\mathbb{Z})_X$ \'etant la r\'eunion disjointe de $n$ copies de $X$. Cette suite exacte d\'efinit un \'el\'ement de $\ext^1_{\kpl}((\mathbb{Z}/n\mathbb{Z})_X,\gm)$, lequel groupe est isomorphe \`a $H^1_{\kpl}(X,\mu_n)$ en vertu du corollaire \ref{isocano}. Nous obtenons ainsi une application $\omega:\Raclog(X,n)\rightarrow H^1_{\kpl}(X,\mu_n)$, qui s'av\`ere \^etre un morphisme de groupes (v\'erification formelle). Pour montrer qu'il s'agit d'un isomorphisme, il suffit de v\'erifier que $\omega$ d\'efinit un morphisme entre les suites \eqref{gmkummer} et \eqref{clogkummer}, en prenant les identit\'es sur les noyaux et conoyaux. Autrement dit, il suffit de v\'erifier que $\omega\circ i=d$ et que $\s\circ\omega=o$, ce qui est un calcul imm\'ediat.
\end{proof}

Une fois ce r\'esultat \'etabli, on constate que le morphisme de log classes pour le groupe $G^D=\mu_n$ transporte les m\^emes informations que le morphisme naturel $\s$ ci-dessus, ce qui g\'en\'eralise le r\'esultat analogue en topologie fppf (voir \cite[prop. 3.1]{gil4}).

\begin{prop}
\label{commebx}
Soit $n\geq 1$ un entier naturel. Alors le morphisme de log classes
$$
\begin{CD}
\pi^{\rm log}:H^1_{\kpl}(X,\mu_n) @>>> H^1_{\kpl}((\mathbb{Z}/n\mathbb{Z})_X,\gm)\simeq H^1_{\kpl}(X,\gm)^n \\
\end{CD}
$$
est l'application qui \`a un torseur $T$ associe le $n$-uplet $(0,\s(T),\s(T)^2,\dots,\s(T)^{n-1})$. En particulier, l'image de $\pi^{\rm log}$ est isomorphe au groupe $H^1_{\kpl}(X,\gm)[n]$.
\end{prop}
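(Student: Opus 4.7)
The plan is to trace a $\mu_n$-torsor through the chain of isomorphisms that defines $\pi^{\rm log}$. Given $T\in H^1_{\kpl}(X,\mu_n)$, apply Proposition \ref{muntorsdesc} to write $T=\omega(L,\tau)$ for a unique class $(L,\tau)\in\Raclog(X,n)$; by construction of the isomorphism between the exact sequences \eqref{gmkummer} and \eqref{clogkummer}, we have $\s(T)=[L]$ in $H^1_{\kpl}(X,\gm)[n]$. Under the isomorphism
$$
H^1_{\kpl}(X,\mu_n)\simeq\ext^1_{\kpl}((\mathbb{Z}/n\mathbb{Z})_X,\gm)
$$
of Corollary \ref{isocano}, the torsor $T$ corresponds precisely to the extension
$$
0\longrightarrow\gm\longrightarrow\Omega(L,\tau)\longrightarrow(\mathbb{Z}/n\mathbb{Z})_X\longrightarrow 0
$$
built in the proof of Proposition \ref{muntorsdesc}, with $\Omega(L,\tau)=\gm\sqcup L\sqcup L^{\vee 2}\sqcup\cdots\sqcup L^{\vee(n-1)}$; this is exactly what is used to define $\omega$, so no further argument is needed for this identification.

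By Definition \ref{loghomo}, $\pi^{\rm log}(T)$ is then the class in $H^1_{\kpl}((\mathbb{Z}/n\mathbb{Z})_X,\gm)$ of the underlying $\gm$-torsor of this extension. The log scheme $(\mathbb{Z}/n\mathbb{Z})_X$ is the disjoint union of $n$ copies of $X$, so restriction to each component yields a canonical isomorphism
$$
H^1_{\kpl}((\mathbb{Z}/n\mathbb{Z})_X,\gm)\simeq\prod_{i=0}^{n-1}H^1_{\kpl}(X,\gm),
$$
and the preimage of the $i$-th copy of $X$ under $\Omega(L,\tau)\to(\mathbb{Z}/n\mathbb{Z})_X$ is exactly $L^{\vee i}$ (with $L^{\vee 0}=\gm$). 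The class of $L^{\vee i}$ in $H^1_{\kpl}(X,\gm)$ equals $[L]^{i}=\s(T)^{i}$, giving $\pi^{\rm log}(T)=(0,\s(T),\s(T)^2,\dots,\s(T)^{n-1})$, which is the first assertion.

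For the final assertion, observe that the map $\alpha\mapsto(0,\alpha,\alpha^2,\dots,\alpha^{n-1})$ is a group homomorphism $H^1_{\kpl}(X,\gm)[n]\to H^1_{\kpl}(X,\gm)^n$ whose projection onto the second factor is the identity; hence it is injective, and its image — which is exactly the image of $\pi^{\rm log}$ by the previous paragraph — is isomorphic to $H^1_{\kpl}(X,\gm)[n]$. The only real point to verify is the compatibility between the isomorphism $\omega$ of Proposition \ref{muntorsdesc} and the isomorphism of Corollary \ref{isocano}, i.e.\ that $\Omega(L,\tau)$ does represent the extension class corresponding to the torsor $\omega(L,\tau)$; this is built into the definition of $\omega$ (the extension $\Omega(L,\tau)$ was introduced precisely to supply an element of $\ext^1_{\kpl}((\mathbb{Z}/n\mathbb{Z})_X,\gm)$), so the verification is formal.
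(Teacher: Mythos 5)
Your proposal is correct and follows essentially the same route as the paper: you identify $T=\omega(L,\tau)$ via Proposition \ref{muntorsdesc} (so that $\s(T)=L$), read off under the isomorphism of Corollary \ref{isocano} that the corresponding extension is $\Omega(L,\tau)=\gm\sqcup L\sqcup\cdots\sqcup L^{\vee(n-1)}$, and conclude from Definition \ref{loghomo} that $\pi^{\rm log}(T)$ is the $n$-tuple $(0,\s(T),\dots,\s(T)^{n-1})$, hence that the image of $\pi^{\rm log}$ is that of $\s$, namely $H^1_{\kpl}(X,\gm)[n]$. Your closing verification that $\alpha\mapsto(0,\alpha,\dots,\alpha^{n-1})$ is injective merely makes explicit a step the paper leaves implicit, so there is no substantive difference.
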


\begin{proof}
Soit $T\rightarrow X$ un $\mu_n$-torseur pour la topologie log plate. D'apr\`es la preuve de la proposition \ref{muntorsdesc}, l'image de $T$ par l'isomorphisme du corollaire \ref{isocano}
$$
\begin{CD}
H^1_{\kpl}(X,\mu_n) @>\sim>> \ext^1_{\kpl}((\mathbb{Z}/n\mathbb{Z})_X,\gm) \\
\end{CD}
$$
est une extension de la forme $\Omega(L,\tau)$, et $L=\s(T)$ d'apr\`es la proposition \ref{muntorsdesc}. D'apr\`es la description de $\Omega(L,\tau)$, le $\gm$-torseur sous-jacent \`a cette derni\`ere est le $n$-uplet
$$
(\gm,L,L^{\vee 2},\dots,L^{\vee (n-1)})
$$
Enfin, d'apr\`es la d\'efinition \ref{loghomo}, ce $\gm$-torseur n'est autre que $\pi^{\rm log}(T)$. On en d\'eduit que l'image de $\pi^{\rm log}$ est isomorphe \`a celle de $\s$, c'est-\`a-dire \`a $H^1_{\kpl}(X,\gm)[n]$.
\end{proof}

Dans l'exemple ci-dessous, nous regardons la structure galoisienne logarithmique des rev\^etements cycliques uniformes.

\begin{exm}
\label{logstrexm}
Soit $X$ un sch\'ema n\oe{}th\'erien r\'egulier, muni de la log structure associ\'ee \`a un ouvert $U$ dont le compl\'ementaire $D$ est un diviseur \`a croisements normaux sur $X$. D'apr\`es la proposition \ref{commebx}, pour calculer la structure galoisienne logarithmique des $\mu_n$-torseurs, il suffit d'expliciter l'application $\s$.

Notre point de d\'epart est la donn\'ee d'un $\mu_n$-torseur log plat (\emph{i.e.} d'un rev\^etement cyclique uniforme) $T\rightarrow X$. D'apr\`es le corollaire \ref{logcyclor}, on peut le d\'ecrire sous la forme $\Sa(\mathcal{L},\phi)$ o\`u $(\mathcal{L},\phi)$ est un \'el\'ement de $\Rac(X,D,n)$. Soit $f$ une section m\'eromorphe de $\mathcal{L}$, et soit $g$ une section m\'eromorphe de l'id\'eal image de $\phi$ (lequel d\'efinit le diviseur de branchement $\Delta_{T/X}$ de $T\rightarrow X$).

Rappelons que l'on note $\DivRat(X,D)$ le groupe des diviseurs (sur $X$) \`a coefficients rationnels au-dessus de $D$ (voir \cite[def 3.1.1]{gil5}). On peut associer au torseur $T$ le diviseur \`a coefficients rationnels suivant
$$
L(T):=-\diviseur(f)+\frac{1}{n}\diviseur(g)
$$
On constate alors que la classe $[L(T)]$ de $L(T)$ modulo le groupe $\Divp(X)$ des diviseurs principaux est \'egale \`a $\s(T)$, le groupe $H^1_{\kpl}(X,\gm)$  s'identifiant canoniquement au quotient $\DivRat(X,D)/\Divp(X)$ (voir \cite[th\'eor\`eme 3.1.3]{gil5}). En fait, on peut m\^eme dire plus : le torseur $T$ s'identifie (via l'isomorphisme du th\'eor\`eme \ref{muntorsdesc}) au couple $([L(T)],\tau)$ o\`u $\tau$ est la trivialisation de $n[L(T)]$ fournie par l'isomorphisme $\phi:\mathcal{L}^{\otimes n}\simeq \mathcal{O}_X(\Delta_{T/X})$.
\end{exm}

Dans le cas particulier d'un trait muni de sa log structure canonique, le morphisme de log classes mesure la ramification des extensions cycliques.

\begin{exm}
\label{logramiftrait}
Soit $S$ un trait, de point g\'en\'erique $\Spec(K)$, muni de sa log structure canonique. Alors on constate que le morphisme $\s$
$$
K^*/(K^*)^n=H^1_{\kpl}(S,\mu_n)\longrightarrow \mathbb{Q}/\mathbb{Z}=H^1_{\kpl}(S,\gm)
$$
envoie $z\in K^*$ sur $v(z)/n$, o\`u $v$ est la valuation de $S$. Par cons\'equent, l'ordre de $\s(z)$ dans $\mathbb{Q}/\mathbb{Z}$ n'est autre que l'indice de ramification de la $K$-alg\`ebre $K(\sqrt[n]{z})$.
\end{exm}

%%%%%%%%%%%%%%%%%%%%%%%%%%%%%%%%%%%%%%%%%%%%%

\subsection{Structure galoisienne classique}
\label{mapclass}

Comme pr\'ec\'edemment, on se donne un $X$-sch\'ema en groupes commutatif, fini localement libre $G$.
Sous les hypoth\`eses du corollaire \ref{CHtame}, on peut d\'efinir une notion de structure galoisienne pour les $G$-torseurs log plats.

\begin{dfn}
\label{mapcl}
On suppose que $X$ admet, localement pour la topologie \'etale sur $X$, une carte model\'ee sur un mono\"ide de la forme $\mathbb{N}^r$, et que l'ouvert de trivialit\'e est dense dans $X$. Alors on d\'efinit une application ensembliste
$$
\begin{CD}
\cl:H^1_{\kpl}(X,G^D) @>>> \pic(G) \\
\end{CD}
$$
qui envoie un $G^D$-torseur $Y\rightarrow X$ sur la classe de $\mathcal{O}_Y\otimes_{\mathcal{O}_G} (\mathcal{O}_{G^D})^{-1}$ dans le groupe $\pic(G)$. Il serait l\'egitime d'appeler cette application \emph{structure galoisienne classique}.
\end{dfn}

Quand on restreint l'application $\cl$ au sous-groupe $H^1_{\fppf}(X,G^D)\subseteq H^1_{\kpl}(X,G^D)$, on retrouve le morphisme de classes $\pi$ d\'efini par Waterhouse. Notons cependant que $\cl$ n'est pas un morphisme de groupes en g\'en\'eral, comme le montre l'exemple qui suit.

\begin{exm}
\label{CDNconstant}
Soient $K$ un corps de nombres, $X=\Spec(\mathcal{O}_K)$ le spectre de l'anneau des entiers de $K$, $D$ un ensemble fini de points ferm\'es de $X$. On munit $X$ de la log structure d\'efinie par l'ouvert compl\'ementaire de $D$. Soit $\Gamma$ un  groupe ab\'elien fini, nous identifions $H^1_{\text{\rm \'et}}(K,\Gamma)$ \`a l'ensemble des (classes d'isomorphie de) $K$-alg\`ebres galoisiennes de groupe $\Gamma$. Alors
$$
H^1_{\ket}(X,\Gamma)\subseteq H^1_{\et}(K,\Gamma)
$$
s'identifie au sous-groupe des $K$-alg\`ebres qui sont mod\'er\'ement ramifi\'ees en les points de $D$ et non ramifi\'ees partout ailleurs, tandis que $H^1_{\et}(X,\Gamma)$ est constitu\'e des $K$-alg\`ebres partout non ramifi\'ees. De plus, le sch\'ema sous-jacent \`a un $\Gamma$-torseur $Y\rightarrow X$ est le spectre de l'anneau des entiers de la $K$-alg\`ebre qui lui correspond via l'inclusion ci-dessus.

Enfin, $\cl$ est l'application bien connue
$$
\cl:H^1_{\ket}(X,\Gamma)\longrightarrow  \pic(\Gamma^D)\simeq \Cl(\mathcal{O}_K[\Gamma])
$$
qui envoie une $K$-alg\`ebre de groupe $\Gamma$ (mod\'er\'ement ramifi\'ee en les points de $D$) sur la classe de son anneau d'entiers dans le groupe des classes localement libres $\Cl(\mathcal{O}_K[\Gamma])$.

On sait que dans ce cadre, $\cl$ n'est pas en g\'en\'eral un morphisme de groupes (voir \cite[Remark 5.2]{childs}). Par contre, l'image de $\cl$ est un sous-groupe de $\Cl(\mathcal{O}_K[\Gamma])$ d'apr\`es un c\'el\`ebre r\'esultat de McCulloh \cite{mcc}.
\end{exm}

\begin{exm}
\label{quelonveut}
Avec les hypoth\`eses et notations de l'exemple pr\'ec\'edent, l'ouvert de trivialit\'e de $X$ est $U=\Spec(\mathcal{O}_{K,D})$, o\`u $\mathcal{O}_{K,D}$ d\'esigne l'anneau des $D$-entiers de $K$. Nous noterons $X^0$ l'ensemble des points ferm\'es de $X$ (\emph{i.e.} l'ensemble des id\'eaux premiers non nuls de $\mathcal{O}_K$). Pour tout $\mathfrak{p}\in X^0$, nous noterons $v_\mathfrak{p}$ la valuation associ\'ee.
Soit  $n\geq 1$ un entier naturel, alors le groupe $H^1_{\kpl}(X,\mu_n)$ est isomorphe au groupe $H^1_{\fppf}(U,\mu_n)$ en vertu de la proposition \ref{dejafait}. Ce dernier, vu comme sous-groupe de $H^1_{\fppf}(\Spec(K),\mu_n)=K^*/(K^*)^n$, admet la description suivante :
$$
H^1_{\kpl}(X,\mu_n)=\{z\in K^*/(K^*)^n\,|\, \forall \mathfrak{p}\in X^0\backslash D, v_\mathfrak{p}(z)\equiv 0\; (\textrm{mod } n)\}
$$
Choisissons un \'el\'ement $T$ de ce groupe, repr\'esent\'e par un $z\in K^*$. Quitte \`a multiplier $z$ par une puissance $n$-i\`eme, on peut supposer que $z$ appartient \`a $\mathcal{O}_K$. Nous allons calculer $\cl(T)$ en fonction de $z$. On peut \'ecrire $T$ sous la forme $(I,\phi)$ o\`u $I$ est un id\'eal fractionnaire de $K$ et $\phi:I^n\mapsto \mathcal{O}_K$ est une injection. Soit $J$ l'id\'eal image de $\phi$, de sorte que $J$ d\'efinit le diviseur de branchement de $T$. En consid\'erant ce qui se passe sur la fibre g\'en\'erique, on obtient la relation $I^{-n}J=(z)$ dans le groupe des id\'eaux fractionnaires.

Pour tout point $\mathfrak{p}\in X^0$, on note
$$
v_\mathfrak{p}(z)=nq_{\mathfrak{p}}(z)+r_{\mathfrak{p}}(z)
$$
la division euclidienne de $v_\mathfrak{p}(z)$ par $n$. Notons que, pour tout $\mathfrak{p}\not\in D$, le reste $r_{\mathfrak{p}}(z)$ est nul. En traduisant ce qu'on a vu dans l'exemple \ref{logstrexm}, on peut alors \'ecrire
$$
\s(T)=[\frac{1}{n} \diviseur(z)]=\sum_{\mathfrak{p}\in X^0} q_{\mathfrak{p}}(z).[\mathfrak{p}]+[\frac{r_{\mathfrak{p}}(z)}{n}\mathfrak{p}]=-[I]+[\frac{1}{n} J]
$$
En oubliant la partie \`a coefficients rationnels, et en changeant de signe, on obtient une classe $c(z)$ dans le groupe $\pic(X)$,
$$
c(z)=-\sum_{\mathfrak{p}\in X^0} q_{\mathfrak{p}}(z).[\mathfrak{p}]
$$
et $\cl(T)$ est l'\'el\'ement $(0,c(z),c(z)^2,\dots,c(z)^{n-1})$ dans le groupe $\pic((\mathbb{Z}/n\mathbb{Z})_X)\simeq \pic(X)^n$. Notons que $c(z)$ n'est pas forc\'ement \'egal \`a la classe de $I$.
Enfin, on constate (avec les notations du paragraphe \ref{torseurscycliques}) que
$$
\nu_n(T)=\sum_{\mathfrak{p}\in D} \frac{v_\mathfrak{p}(z)}{n}.\mathfrak{p}
$$
dans le groupe $\oplus_{\mathfrak{p}\in D} (\frac{1}{n}\mathbb{Z}/\mathbb{Z}).\mathfrak{p}$.
\end{exm}

%%%%%%%%%%%%%%%%%%%%%%%%%%%%%%%%%%%%%%%%%%%%%

\subsection{Vari\'et\'es ab\'eliennes et invariants de classes}
\label{dernier}

Dans ce paragraphe, nous reprenons les notations et hypoth\`eses du paragraphe \ref{hundeux} : $S$ est un sch\'ema de Dedekind  connexe de point g\'en\'erique $\eta=\Spec(K)$, $\A$ (resp. $\A^t$) est le mod\`ele de N\'eron d'une $K$-vari\'et\'e ab\'elienne (resp. de sa vari\'et\'e duale), et $U\subseteq S$ est l'ouvert de bonne r\'eduction de $\A$.

On munit $S$ de la log structure d\'efinie par $U$. Le log sch\'ema ainsi obtenu satisfait les hypoth\`eses des propositions \ref{loclibre} et \ref{injectrestr}, ainsi que celles du corollaire \ref{CHtame}.

D'apr\`es \cite[Corollaire 4.1.8]{gil5}, il existe une unique biextension $W^{\rm log}$ de $(\A,\A^t)$ par $\gm$ pour la topologie log plate qui prolonge la biextension de Weil $W_U$.

La d\'emarche que nous adoptons \`a pr\'esent est semblable \`a celle utilis\'ee dans \cite{gil1}. Nous commen\c{c}ons par introduire un petit site Kummer plat.

\begin{dfn}
On appelle petit site Kummer plat sur $S$ la cat\'egorie des $S$-log sch\'emas fins et satur\'es dont le morphisme structural est log plat et kumm\'erien, munie d'une structure de site pour la topologie Kummer log plate.
\end{dfn}

A pr\'esent, tous les faisceaux consid\'er\'es seront des faisceaux ab\'eliens sur le petit site Kummer plat de $S$.

\begin{lem}
\label{nulhom}
Le faisceau $\Hom(\A,\gm)$ sur le petit site Kummer plat est nul.
\end{lem}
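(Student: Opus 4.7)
La strat\'egie est de montrer que pour tout objet $T$ du petit site Kummer plat de $S$, le groupe $\homr_T(\A_T, \gm)$ s'annule ; puisque cette valeur est celle du faisceau $\Hom(\A,\gm)$ \'evalu\'e en $T$, ceci suffira.

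D'abord, je restreindrais un hypoth\'etique homomorphisme $f : \A_T \rightarrow \gm$ \`a la ``fibre g\'en\'erique'' $T_\eta := T \times_S^{\rm fs} \Spec(K)$. Puisque $\eta$ est contenu dans l'ouvert de trivialit\'e $U$ de $S$, la log structure de $T_\eta$ est triviale (tout mono\"ide fs sharp kumm\'erien au-dessus du mono\"ide nul \'etant lui-m\^eme nul), et son sch\'ema sous-jacent co\"incide avec $T^\circ \times_S \Spec(K)$. Par cons\'equent, $\A_{T_\eta}$ s'identifie \`a l'image r\'eciproque sur $T_\eta$ de la vari\'et\'e ab\'elienne $\A_K$ : c'est donc un sch\'ema ab\'elien sur $T_\eta$. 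Par rigidit\'e (la propret\'e \`a fibres g\'eom\'etriquement int\`egres donne $\pi_*\mathcal{O}_{\A_{T_\eta}} = \mathcal{O}_{T_\eta}$, et la condition d'homomorphisme de groupes impose que la section inversible correspondante vaille $1$), on a $f|_{T_\eta} = 0$.

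Ensuite, je propagerais cette annulation gr\^ace \`a la platitude. La base $S$ \'etant de Dedekind, ses cartes \'etale-locales peuvent \^etre choisies model\'ees sur $\mathbb{N}$ ou sur $0$ ; la d\'efinition \ref{logdefinitions} combin\'ee au lemme \ref{niziolstyle} et au lemme \ref{quisuit} permet alors de voir, fppf-localement, que le morphisme sous-jacent $T^\circ \rightarrow S$ est plat. L'int\'egralit\'e de $S$ entra\^ine que $\eta \hookrightarrow S$ est une immersion sch\'ematiquement dense ; par platitude, son image r\'eciproque $T_\eta \hookrightarrow T^\circ$ l'est aussi. En prenant une image r\'eciproque via le morphisme lisse $\A_{T^\circ} \rightarrow T^\circ$, on obtient que $\A_{T_\eta} \hookrightarrow \A_{T^\circ}$ demeure sch\'ematiquement dense.

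Enfin, $f$ correspond \`a une unit\'e $u \in \Gamma(\A_{T^\circ}, \mathcal{O}^*)$. La premi\`ere \'etape fournit $u|_{\A_{T_\eta}} = 1$, et la densit\'e sch\'ematique de $\A_{T_\eta}$ dans $\A_{T^\circ}$ force $u = 1$ globalement, d'o\`u $f = 0$. L'obstacle technique principal est la v\'erification, pour un $T$ g\'en\'eral du petit site, que $T^\circ \rightarrow S$ est plat ; ceci repose sur la structure fppf-locale des morphismes log plats kumm\'eriens, qui se ram\`ene via les cartes standards de $S$ au lemme \ref{quisuit}.
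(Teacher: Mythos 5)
Your proof is correct and follows essentially the same route as the paper's: both reduce to the underlying schemes, establish flatness of $T^{\circ}\rightarrow S$ by fppf-local Kummer charts (lemma \ref{niziolstyle} together with lemma \ref{quisuit}, exactly as in the proof of proposition \ref{loclibre}), and then conclude by a density argument from the vanishing of homomorphisms from an abelian scheme to $\gm$. The only cosmetic difference is that the paper restricts to the dense open $V=f^{-1}(U)$ above the whole good-reduction locus, so that ordinary density of an open subset (plus flatness over $S$) suffices, whereas you restrict to the generic fibre $T_{\eta}$ and therefore need the slightly more delicate scheme-theoretic density of $T_{\eta}\hookrightarrow T^{\circ}$, which you justify properly.
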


\begin{proof}
Soit $f:Z\rightarrow S$ un morphisme log plat kumm\'erien, nous devons montrer que
$$
\homr_Z(\A_Z,\gmy)=0
$$
Les log structures consid\'er\'ees \'etant images r\'eciproques de la log structure de $Z$, la donn\'ee d'un morphisme de log sch\'emas $\A_Z\rightarrow\gmy$ \'equivaut \`a la donn\'ee d'un morphisme sur les sch\'emas sous-jacents.

D'autre part, la question est locale pour la topologie fppf sur $S$ et $Z$. D'apr\`es le lemme \ref{niziolstyle} on sait que, localement pour la topologie fppf sur $S$ et $Z$, le morphisme $f:Z\rightarrow S$ admet une carte qui est un morphisme kumm\'erien de mono\"ides fs. Ainsi on se ram\`ene au cas o\`u $f=g\circ h$ avec $g:T\rightarrow S$ un torseur kumm\'erien standard, et $h:Z\rightarrow T$ un morphisme strict, plat sur les sch\'emas sous-jacents. D'apr\`es la proposition \ref{loclibre}, $g$ est plat sur les sch\'emas sous-jacents, donc $f$ l'est aussi. On en d\'eduit que l'ouvert $V:=f^{-1}(U)$ est dense dans $Z$. On peut alors \'ecrire
$$
\homr_Z(\A_Z,\gmy)\subseteq \homr_V(\A_V,\gmv)=0
$$
l'inclusion d\'ecoulant de la densit\'e de $V$ dans $Z$, et de la platitude des sch\'emas $\A_Z$ et $\gmy$ sur $Z$. La nullit\'e du second membre est un r\'esultat bien connu puisque $\A_V$ est un $V$-sch\'ema ab\'elien.
\end{proof}

Les axiomes de base sont r\'eunis pour que la construction de \cite{gil1} fonctionne sur le petit site Kummer plat.

Soit $G\subseteq \A$ un sous-groupe fini et plat de $\A$. On quotiente $\A$ par $G$ dans la cat\'egorie des faisceaux sur le petit site Kummer plat, ce qui donne lieu \`a une suite exacte
$$
\begin{CD}
0 @>>> G @>>> \A @>\phi>> B @>>> 0 \\
\end{CD}
$$
o\`u $B$ est un faisceau, \emph{a priori} non repr\'esentable.

On applique ensuite le foncteur $\Hom(-,\gm)$ \`a cette suite. Les faisceaux $\Hom(\A,\gm)$ et $\Ext^1_{\kpl}(G,\gm)$ \'etant nuls (lemme \ref{nulhom} et th\'eor\`eme \ref{splitext}), on obtient une suite exacte courte
$$
\begin{CD}
0 @>>> G^D @>>> \Ext^1_{\kpl}(B,\gm) @>\phi^*>> \Ext^1_{\kpl}(\A,\gm) @>>> 0 \\
\end{CD}
$$
d'o\`u un morphisme cobord
$$
\begin{CD}
\delta^{\rm log}:\ext^1_{\kpl}(\A,\gm) @>>> H^1_{\kpl}(S,G^D) \\
\end{CD}
$$
et par composition avec l'isomorphisme $\gamma^{\rm log}:\A^t(S)\rightarrow \ext^1_{\kpl}(\A,\gm)$ (associ\'e \`a la biextension $W^{\rm log}$), un morphisme
$$
\begin{CD}
\Delta:\A^t(S) @>>> H^1_{\kpl}(S,G^D) \\
\end{CD}
$$
On en d\'eduit le point $(i)$ du th\'eor\`eme \ref{thintro1}. On peut par la suite mesurer la structure galoisienne des torseurs (pour la topologie Kummer plate) obtenus gr\^ace \`a ce morphisme, d'o\`u le th\'eor\`eme \ref{thintro2} de l'introduction.

En reprenant (sans les modifier) les arguments de \cite[lemme 3.2 et remarque 3.3]{gil1}, on montre le r\'esultat suivant.

\begin{lem}
\label{geomdesk}
Soit $i_G:G\rightarrow \A$ l'inclusion canonique, et soit $y$ un point de $\A^t(S)$. Alors le pull-back $(i_G\times y)^*(W^{\rm log})$ est une extension de $G$ par $\gm$ (pour la topologie log plate). Cette extension n'est autre que l'image du $G^D$-torseur $\Delta(y)$ par l'isomorphisme du corollaire \ref{isocano}.
\end{lem}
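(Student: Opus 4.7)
L'approche que nous adoptons consiste \`a transposer mot pour mot le raisonnement de \cite[lemme 3.2 et remarque 3.3]{gil1} au cadre Kummer log plat, en proc\'edant en deux \'etapes.

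\textbf{Premi\`ere \'etape.} La biextension $W^{\rm log}$ est par d\'efinition un $\gm$-torseur sur $\A\times_S\A^t$ muni de deux lois de groupes partielles. L'image r\'eciproque de $W^{\rm log}$ par $\mathrm{id}_{\A}\times y:\A\to\A\times_S\A^t$ pr\'eserve la loi partielle dans la direction $\A$ et donne donc lieu \`a une extension de $\A$ par $\gm$ : cette extension est, par d\'efinition m\^eme, la classe $\gamma^{\rm log}(y)\in\ext^1_{\kpl}(\A,\gm)$ associ\'ee \`a $y$ via la biextension. Un nouveau pull-back le long de $i_G$ fournit l'extension $i_G^*\gamma^{\rm log}(y)$ de $G$ par $\gm$, qui est canoniquement isomorphe \`a $(i_G\times y)^*W^{\rm log}$ par fonctorialit\'e des images r\'eciproques. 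Ceci prouve la premi\`ere partie de l'\'enonc\'e.

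\textbf{Seconde \'etape.} Il reste \`a v\'erifier que $i_G^*\gamma^{\rm log}(y)$ et $\Delta(y)=\delta^{\rm log}(\gamma^{\rm log}(y))$ se correspondent par l'isomorphisme du corollaire \ref{isocano}. Il suffit pour cela d'\'etablir la compatibilit\'e plus g\'en\'erale suivante : le compos\'e de la fl\`eche $i_G^*:\ext^1_{\kpl}(\A,\gm)\to\ext^1_{\kpl}(G,\gm)$ avec l'inverse de l'isomorphisme $H^1_{\kpl}(S,G^D)\stackrel{\sim}{\to}\ext^1_{\kpl}(G,\gm)$ du corollaire \ref{isocano} co\"incide avec le cobord $\delta^{\rm log}:\ext^1_{\kpl}(\A,\gm)\to H^1_{\kpl}(S,G^D)$.

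\textbf{Point difficile.} Cette derni\`ere identit\'e est le c\oe{}ur de l'argument. Elle exprime la compatibilit\'e bien connue entre deux cobords attach\'es au bifoncteur $\Ext$ : la fl\`eche de Yoneda $i_G^*$ provenant de la suite exacte $0\to G\to\A\to B\to 0$, et le cobord cohomologique $\delta^{\rm log}$ provenant de la suite exacte de faisceaux $0\to G^D\to\Ext^1_{\kpl}(B,\gm)\to\Ext^1_{\kpl}(\A,\gm)\to 0$ (obtenue \`a partir de la premi\`ere par application de $\Hom(-,\gm)$ et gr\^ace aux annulations de $\Hom(\A,\gm)$ et $\Ext^1_{\kpl}(G,\gm)$ fournies par le lemme \ref{nulhom} et le th\'eor\`eme \ref{splitext}). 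Concr\`etement, \'etant donn\'e une classe $E\in\ext^1_{\kpl}(\A,\gm)$, on choisit un recouvrement Kummer log plat de $S$ au-dessus duquel $E$ admet un rel\`evement local \`a $\Ext^1_{\kpl}(B,\gm)$ ; le cocycle \`a valeurs dans $G^D$ mesurant le d\'efaut de recollement de ces rel\`evements calcule $\delta^{\rm log}(E)$, tandis que les m\^emes donn\'ees locales produisent des scindages de $i_G^*E$ dont le cocycle des diff\'erences repr\'esente la classe de $i_G^*E$ dans $\ext^1_{\kpl}(G,\gm)\simeq H^1_{\kpl}(S,G^D)$. Une comparaison directe montre que les deux cocycles co\"incident. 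Les arguments de \cite{gil1} s'appliquent ainsi mot pour mot dans ce cadre, gr\^ace aux annulations \'etablies plus haut dans le pr\'esent article, et ceci termine la d\'emonstration.
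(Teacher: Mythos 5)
Your proposal is correct and takes essentially the same route as the paper, whose entire proof of this lemma is the remark that the arguments of \cite[lemme 3.2 et remarque 3.3]{gil1} apply \emph{verbatim} in the Kummer log flat setting, the needed inputs being exactly the ones you invoke (lemme \ref{nulhom}, th\'eor\`eme \ref{splitext}, corollaire \ref{isocano}). Your two steps --- identifying $(i_G\times y)^*(W^{\rm log})$ with $i_G^*\gamma^{\rm log}(y)$ via the partial group law of the biextension, then checking by a \v{C}ech cocycle comparison (local lifts to $\Ext^1_{\kpl}(B,\gm)$ versus local splittings of $i_G^*E$, using $\phi\circ i_G=0$) that $i_G^*$ composed with the inverse of the isomorphism of corollaire \ref{isocano} equals $\delta^{\rm log}$ --- are precisely that transposed argument, written out in detail.
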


\begin{rmq}
Pour construire $\Delta$, nous avons utilis\'e la m\^eme d\'emarche que dans \cite{gil1}, \`a savoir expliciter une suite exacte permettant de construire les $G^D$-torseurs. Au vu du lemme \ref{geomdesk}, nous aurions pu d\'efinir $\Delta(y)$ comme \'etant le $G^D$-torseur correspondant \`a l'extension $(i_G\times y)^*(W^{\rm log})$ par l'isomorphisme du corollaire \ref{isocano}.
Ceci permet de montrer le $(i)$ du th\'eor\`eme \ref{thintro1} sans utiliser le petit site Kummer plat, les seuls outils n\'ecessaires \'etant la biextension $W^{\rm log}$ et l'isomorphisme du corollaire \ref{isocano}.
\end{rmq}

Il nous reste \`a \'etablir le point $(ii)$ du th\'eor\`eme \ref{thintro1}. En vertu de la proposition \ref{kerpilog}, nous pouvons dire que $\Delta(y)$ est un $G^D$-torseur fppf si et seulement si $\psi^{\rm log}(y)$ est un $\gm$-torseur fppf.
Soit  $t(W^{\rm log})$ le $\gm$-torseur sur $\A\times_S\A^t$ sous-jacent \`a la biextension $W^{\rm log}$. Gr\^ace au lemme \ref{geomdesk} (et en reprenant les arguments de \cite[remarque 3.3]{gil1}) nous avons, pour tout $y\in\A^t(S)$, l'\'egalit\'e suivante
\begin{equation}
\label{classinvariant}
\psi^{\rm log}(y)=(i_G\times y)^*(t(W^{\rm log}))
\end{equation}
dans le groupe $H^1_{\kpl}(G,\gm)$.

Le probl\`eme \'etant de nature locale pour la topologie \'etale, nous pouvons supposer que $S$ est un trait strictement hens\'elien. Nous noterons $i:s\rightarrow S$ l'inclusion du point ferm\'e $s=\Spec(k)$, o\`u $k$ est un corps s\'eparablement clos. Soit $\Phi=\A_s/\A_s^0$, et soit $i_*\Gamma$ l'image de $G$ par le morphisme canonique $\A\rightarrow i_*\Phi$. Alors $\Gamma$ est un $k$-sch\'ema en groupes fini \'etale, donc constant, et le faisceau $i_*\Gamma$ est repr\'esentable par un $S$-sch\'ema en groupes constant tronqu\'e. En restreignant \`a $G$ le morphisme $\A\rightarrow i_*\Phi$, on obtient un morphisme (surjectif) de $S$-sch\'emas $G\rightarrow i_*\Gamma$. Nous munissons ces deux sch\'emas de la log structure image r\'eciproque de celle de $S$. On constate alors, en se servant de l'\'egalit\'e \eqref{classinvariant} et d'un argument de faisceautisation semblable \`a celui de \cite[proposition 4.2.3]{gil5}, que le diagramme suivant commute
$$
\begin{CD}
\A^t(S) @>\psi^{\rm log}>> H^1_{\kpl}(G,\gm) @>\nu >> H^0(G,\mathbb{Q}/\mathbb{Z} \otimes \overline{M}_G^{\rm gp}) \\
@VVV @. @AAeA \\
\Phi'(k) @>m>> \bigoplus_{\gamma\in \Gamma(k)} (\mathbb{Q}/\mathbb{Z}).\gamma @>\sim>> H^0(i_*\Gamma,\mathbb{Q}/\mathbb{Z} \otimes \overline{M}_{i_*\Gamma}^{\rm gp}) \\
\end{CD}
$$
dans lequel les fl\`eches sont les suivantes : $\nu$ se d\'eduit de la suite spectrale \eqref{spec1} et du th\'eor\`eme \ref{epsilonkato}, la fl\`eche verticale de gauche est obtenue par r\'eduction \`a la fibre sp\'eciale, la fl\`eche $m$ est d\'efinie par
$$
m(\zeta)=\sum_{\gamma\in \Gamma(k)} <\gamma,\zeta>^{\rm mono}.\gamma
$$
(o\`u $<\gamma,\zeta>^{\rm mono}$ est l'accouplement de monodromie entre les composantes $\gamma$ et $\zeta$), et la fl\`eche $e$ d\'ecoule de l'isomorphisme naturel $q^*M_{i_*\Gamma}\simeq M_G$ correspondant au morphisme strict de log sch\'emas $q:G\rightarrow i_*\Gamma$. On v\'erifie ais\'ement que $e$ est injective.

On sait que le noyau de $\nu$ est l'ensemble des $\gm$-torseurs fppf. On d\'eduit alors du diagramme ci-dessus (et de l'injectivit\'e de $e$) le fait suivant : \'etant donn\'e $y\in \A^t(S)$, $\psi^{\rm log}(y)$ est un $\gm$-torseur fppf si et seulement si $m(\zeta_y)=0$, o\`u $\zeta_y$ est la composante dans laquelle $y$ se r\'eduit. En vertu de ce qui pr\'ec\`ede, nous venons donc de montrer le $(ii)$ du th\'eor\`eme \ref{thintro1}.

Supposons encore une fois que $S$ soit un trait. Nous notons $(-,-)^{\rm mono}$ l'accouplement $\A(S)\times \A^ t(S)\rightarrow \mathbb{Q}/\mathbb{Z}$ obtenu en composant l'accouplement de monodromie avec le morphisme de r\'eduction \`a la fibre sp\'eciale. Le corollaire ci-dessous permet de relier la ramification des extensions obtenues en divisant des points aux valeurs de l'accouplement de monodromie.

\begin{cor}
Supposons que $S$ soit un trait et que $G$ soit cyclique constant engendr\'e par un certain $x\in \A(S)$. Alors, pour tout $y\in \A^t(S)$, l'indice de ramification de l'alg\`ebre de $\Delta(y)$ est l'ordre de l'accouplement de monodromie $(x,y)^{\rm mono}$ dans $\mathbb{Q}/\mathbb{Z}$.
\end{cor}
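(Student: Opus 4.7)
La strat\'egie que j'adopterais consiste \`a combiner l'exemple \ref{logramiftrait} avec le diagramme commutatif construit dans la preuve du point $(ii)$ du th\'eor\`eme \ref{thintro1} et la proposition \ref{commebx}. Puisque $G$ est cyclique constant d'ordre $n$, on a $G^D\simeq\mu_n$, si bien que $\Delta(y)$ est un $\mu_n$-torseur log plat sur $S$. D'apr\`es l'exemple \ref{logramiftrait}, l'indice de ramification de son alg\`ebre est l'ordre de $\s(\Delta(y))$ dans $\mathbb{Q}/\mathbb{Z}=H^1_{\kpl}(S,\gm)$. Le probl\`eme se r\'eduit donc \`a \'etablir l'\'egalit\'e $\s(\Delta(y))=(x,y)^{\rm mono}$ dans $\mathbb{Q}/\mathbb{Z}$.

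Pour cela, je commencerais par identifier le sch\'ema $G$ \`a la r\'eunion disjointe $\bigsqcup_{j\in\mathbb{Z}/n\mathbb{Z}}S$, ce qui permet d'identifier $H^1_{\kpl}(G,\gm)$ \`a $(\mathbb{Q}/\mathbb{Z})^n$ (les $\pic$ \'etant triviaux puisque $S$ est strictement hens\'elien). La proposition \ref{commebx} donne alors explicitement que la $j$-i\`eme composante de $\pi^{\rm log}(\Delta(y))$ vaut $j\cdot\s(\Delta(y))$. Le morphisme $\nu$ \'etant un isomorphisme sur chaque facteur, la $j$-i\`eme composante de $\nu(\psi^{\rm log}(y))$ est encore $j\cdot\s(\Delta(y))$. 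De l'autre c\^ot\'e, si l'on note $\xi$ l'image de $x$ dans $\Gamma(k)$ (qui en est un g\'en\'erateur), le morphisme strict $q:G\rightarrow i_*\Gamma$ envoie la composante $j$ de $G$ sur la composante $j\xi$ de $i_*\Gamma$, si bien que la $j$-i\`eme composante de $e(m(\zeta_y))$ vaut $\langle j\xi,\zeta_y\rangle^{\rm mono}=j\cdot(x,y)^{\rm mono}$ par bilin\'earit\'e de l'accouplement de monodromie.

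La commutativit\'e du diagramme du $(ii)$ du th\'eor\`eme \ref{thintro1} donnera alors $j\cdot\s(\Delta(y))=j\cdot(x,y)^{\rm mono}$ dans $\mathbb{Q}/\mathbb{Z}$ pour tout $j\in\mathbb{Z}/n\mathbb{Z}$ ; le cas $j=1$ fournit l'\'egalit\'e voulue, ce qui conclut. L'\'ecueil principal sera de s'assurer que toutes les identifications \'evoqu\'ees (notamment entre $H^1_{\kpl}(G,\gm)$ et $(\mathbb{Q}/\mathbb{Z})^n$, et entre $\overline{M}_G^{\rm gp}$ et le faisceau constant $\mathbb{Z}$ sur chaque composante de $G$) sont bien compatibles avec les fl\`eches du diagramme et avec la description de $\pi^{\rm log}$ donn\'ee par la proposition \ref{commebx}. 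Cette v\'erification reste essentiellement formelle, les log structures en jeu \'etant toutes images r\'eciproques de celle de $S$.
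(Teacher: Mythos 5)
Your proposal is correct and matches the paper's proof in substance: the paper's one-line argument combines Lemma \ref{geomdesk}, Example \ref{logramiftrait} and \cite[Proposition 4.2.3]{gil5}, and the commutative diagram you invoke from the proof of Theorem \ref{thintro1}$(ii)$ is precisely the internal packaging of the first and third of these ingredients, so your reduction to the identity $\s(\Delta(y))=(x,y)^{\rm mono}$ via Proposition \ref{commebx} and the $j=1$ component is the same computation. The only point to make explicit is that this diagram was established after reducing to $S$ strictly hens\'elien (which you assume silently when trivializing the Picard groups), so you should begin by passing to the strict henselization --- a harmless step, since it changes neither the ramification index nor the monodromy pairing, and note in passing that $\pic(S)$ is trivial for any trait, henselian or not.
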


\begin{proof}
D\'ecoule du lemme \ref{geomdesk}, de l'exemple \ref{logramiftrait} et de \cite[Proposition 4.2.3]{gil5}.
\end{proof}

%%%%%%%%%%%%%%%%%%%%%%%%%%%%%%%%%%%%%%%%%%%%%

%%%%%%%%%%%%%%%%%%%%%%%%%%%%%%%%%%%%%%%%%%%%%

\vskip 1cm

Jean Gillibert
\smallskip

Institut de Math\'ematiques de Bordeaux

Universit\'e Bordeaux 1

351, cours de la Lib\'eration

33405 Talence Cedex

France

\medskip

\texttt{jean.gillibert@math.u-bordeaux1.fr}

\end{document}